\documentclass[a4paper,reqno]{amsart}
\usepackage{mathtools} 
\usepackage{amstext, amsthm, amssymb, amsfonts}
\usepackage{booktabs}
\usepackage{graphicx, fancyvrb}
\usepackage{bm}
\usepackage[T1]{fontenc}
\graphicspath{{figures/}}
\usepackage{enumitem}
\usepackage{mathrsfs}
\usepackage[dvipsnames]{xcolor}
\usepackage[margin=2.5cm]{geometry}
\usepackage[normalem]{ulem}
\usepackage{dsfont}  

\usepackage{comment}

\usepackage{hyperref} 
\hypersetup{colorlinks}

\theoremstyle{plain}

\newtheorem{definition}{Definition}[section]
\newtheorem{theorem}[definition]{Theorem}
\newtheorem{lemma}[definition]{Lemma}

\newtheorem{prop}[definition]{Proposition}
\newtheorem{assumption}[definition]{Assumption}

\theoremstyle{definition}
\newtheorem{remark}[definition]{Remark}

\newcommand{\N}{\mathbb{N}}
\newcommand{\one}{\mathds{1}}
\newcommand{\half}{\frac{1}{2}}
\newcommand{\inner}[3][]{\langle #2 , #3 \rangle_{#1}}

\newcommand{\innerB}[3][]{\Big\langle #2 , #3 \Big\rangle_{#1}}
\newcommand{\E}{\mathbb{E}}
\newcommand{\C}{\mathbb{C}}

\newcommand{\tL}{L}
\newcommand{\tK}{\widehat K}

\newcommand{\nuplus}{\max(0,\nu)}
\newcommand{\R}{\mathbb{R}}
\newcommand{\tone}{t + h \xi(\omega)}
\newcommand{\ttwo}{t + h (1-\xi(\omega))}
\renewcommand{\P}{\mathbb{P}}
\renewcommand{\Re}{\mathrm{Re}}
\newcommand{\F}{\mathcal{F}}
\newcommand{\diff}[1]{\,\mathrm{d}#1}
\newcommand{\verttt}{{\vert\kern-0.25ex\vert\kern-0.25ex\vert}}
\newcommand{\initvec}{v}
\newcommand{\genvec}{v}

\newcommand{\randvec}{\eta}

\newcommand{\kone}{k^1}
\newcommand{\ktwo}{k^2}
\newcommand{\kones}{k^1_h}
\newcommand{\ktwos}{k^2_h}
\newcommand{\koness}{k^1_{hh}}
\newcommand{\ktwoss}{k^2_{hh}}

\newcommand{\ellone}{\ell^1}
\newcommand{\elltwo}{\ell^2}
\newcommand{\ellones}{\ell^1_h}
\newcommand{\elltwos}{\ell^2_h}

\newcommand{\ellonearg}{\theta^1,\ell^1}
\newcommand{\elltwoarg}{\theta^2,\ell^2}

\newcommand{\vertttb}{{\big\vert\kern-0.25ex\big\vert\kern-0.25ex\big\vert}}
\newcommand{\verttttB}{{\Big\vert\kern-0.25ex\Big\vert\kern-0.25ex\Big\vert}}

\title[Error Bound And Stability Analysis For a Randomized SDIRK method]{Error
Bound and Stability Analysis for a Randomized Singly Diagonally Implicit
Runge--Kutta Method}
\author[M.~Eisenmann]{Monika Eisenmann}
\author[M.~Jans]{Marvin Jans}
\author[R.~Kruse]{Raphael Kruse}
\author[H.~Podhaisky]{Helmut Podhaisky}
\email{marvin.jans@math.lth.se}
\subjclass[2020]{Primary  65L20, 65C10, 65L05, 65L06}
\date{\today}

\thanks{The first and the second author were supported in part by the Swedish
  Research Council under the grant 2023-03930, eSSENCE: The e-Science
  Collaboration and the Crafoord foundation.
}

\begin{document}

\allowdisplaybreaks

\begin{abstract}
A randomized Singly Diagonally Implicit Runge--Kutta (SDIRK) method,
based on the randomized trapezoidal rule as the underlying quadrature
scheme, is proposed.
Every realization of the scheme is an algebraically
stable SDIRK method of at least second order.
The main result is the proof that the randomized scheme converges with order
$2.5$ in the root mean square sense under low regularity assumptions.
Numerical experiments illustrate the robustness of the new scheme
when applied to nonsmooth problems.
\end{abstract}

\maketitle

\section{Introduction}

Ordinary differential equations (ODEs) arise in a wide range of scientific and
engineering models.
In the following, we consider a general initial value problem of the type
\begin{align*}
  u'(t)=f(t,u(t)) \in \R^d, \quad t \in (0,T), \quad u(0) = u_0 \in \R^d,
\end{align*}
for $d \in \N$ and $T \in (0,\infty)$ that can represent a large amount of
concrete examples.
While this type of equation is used in many applications, it is typically not
possible to find an analytical solution.
Thus, a numerical approximation needs to be found.
We discretize the equation in time, meaning that for $N \in \N$, we define the
grid $t_n = nh$, $n \in \{0,\dots,N\}$, for a step size $h = \frac{T}{N}$ and
find approximations of the exact solution $u(t_n) \approx u^n$ at a grid point.
In this paper, we concentrate on a scheme with good stability properties that is
suited for stiff equations~\cite{Hairer1996}.
Specifically, we employ a singly diagonally implicit $s$-stage Runge--Kutta method (SDIRK)
of the form
\begin{align*}
  u^{n+1} &= u^n + h \sum_{i = 1}^s b_i k_{n+1}^{i}\\
  k_{n+1}^{i} &= f\Big(t_n+c_i h, u^{n} +
  h\sum_{j = 1}^s a_{i,j} k_{n+1}^{j}\Big),
  \quad \text{for} \; i \in  \{1,\ldots,s\}.
\end{align*}
with a Butcher tableau
\begin{align*}
  \begin{array}{c|c}
    c & A   \\ \hline
    & b^\top
  \end{array}, \quad
  A = (a_{i,j})_{i,j \in \{1,\dots,s\}}, \quad
  b = (b_i)_{i\in \{1,\dots,s\}}, \quad \text{and} \quad
  c = (c_i)_{i\in \{1,\dots,s\}},
\end{align*}
where the matrix $A$ is a lower triangular matrix with identical diagonal
elements, so that only $s$ systems of the size $d$ have to be solved in every
step.
In the following, we focus on second-order, two-stage schemes and aim to
increase their order of convergence using randomization of specific entries of
$A$.

Randomization for the approximation of integrals is a well-known strategy
and leads to methods that can remain effective in cases with low regularity
of the integrand where deterministic schemes fail to converge.
The stratified sampling approach, as we use it here, was introduced in
\cite{haber1966,haber1967} for integration problems as randomized quadrature
methods.
In \cite{Kruse2017}, the randomized Riemann sum method and, in \cite{Wu2022},
the randomized trapezoidal quadrature method are further analyzed.

Early work on randomized methods for solving ODEs includes \cite{Dodson,
stengle1990, stengle1995}.
Additional randomized time stepping methods similar to ours can be found, for
example, in \cite{Jentzen2009,Kruse2017} for a randomized forward Euler method,
in \cite{Eisenmann2019} for a randomized backward Euler method, in
\cite{Kruse2017} for a random two-stage explicit Runge--Kutta method, and in
\cite{2025-bochacik_convergence} for a higher order implicit Runge--Kutta
method.
For slightly different problem classes, we also mention \cite{Bochacik.2023,
HuEtAll.2024, KruseWu.2019} and for quasi Monte Carlo Runge--Kutta methods see
\cite{Coulibaly1999,Kainhofer2003,Lecot2001}. Additionally, similar
randomization approaches have been combined with Taylor methods and
exponential integrators in \cite{Bochacik.2023B} and \cite{Hofmanova.2020},
respectively.

In \cite{2025-bochacik_convergence}, the authors
propose an implicit random $\theta$-method and prove convergence of the
method with up to order $1.5$.
While their method is not A-stable for every random realization, they show it is asymptotically stable in a generalized probabilistic sense.
In this paper, by following a similar path, we introduce a randomized
two-stage SDIRK method and perform a rigorous error analysis.
More precisely, we provide error bounds that show a convergence order of up to
order $2.5$ for a coefficient function $f$ that is only two times
differentiable.
We refer to Assumption~\ref{ass:f_high_conv} for the exact regularity requirements.
We carefully derive the error constants and make sure they do not exponentially
grow with factors depending on the (local) Lipschitz constant of the function
$f$.
Additionally, we show that our method is algebraically stable for every random
choice. This implies, in particular, that the method is A-stable and there is no
step size restriction depending on the Lipschitz constant.
Together, this indicates that the proposed method is applicable to stiff
problems, e.g., arising from the spatial discretization of parabolic PDEs.

Thus, compared to the Runge--Kutta method studied in \cite{2025-bochacik_convergence},
our method has improved properties in two different ways.
First, our method achieves a higher convergence order with error constants that
do not grow exponentially with the Lipschitz constant of $f$.
Second, our method is A-stable for every realization of the random parameter.

Randomized methods with arbitrary convergence orders have also been studied
in~\cite{Daun2011,Heinrich2008, Kacewicz2004} in the framework of
information-based complexity. In~\cite{Daun2011,Heinrich2008, Kacewicz2004},
they assume that $f$ is bounded, which can be quite restrictive. Furthermore, for
the proof of convergence higher than $1.5$, the same regularity assumptions in
both input parameters of the function $f$ are required. One aim of this paper is
to ease the regularity on the temporal argument. Additionally, we can lift the
assumptions on the boundedness of $f$ and its derivatives. This shows that when
the ODE simplifies to an integration problem, we get a similar convergence
result as in~\cite{Wu2022} for the randomized trapezoidal quadrature method.
This is desirable, since the temporal argument can be more irregular than the
spatial argument.

For the spatial argument, we require a stronger regularity assumption compared to
the temporal argument.
Because of this, it is not sufficient to look only at autonomous ODEs in the
error analysis.
Despite the higher regularity assumption on the spatial argument, the
assumptions are not necessarily stronger than the H\"older continuity assumption
on the spatial argument used in~\cite{Daun2011,Heinrich2008,Kacewicz2004}.

The remainder of the paper is organized as follows:
Section~\ref{sec:setting} establishes the regularity requirements for the
underlying problem.
Section~\ref{sec:method} introduces the randomized implicit Runge--Kutta method
along with the classical conditions for order 2.
Section~\ref{sec:existence} provides proofs for the existence and measurability
of the numerical solution.
It is shown in Section~\ref{sec:stability} that every realization of the random
Runge--Kutta method is algebraically stable and, therefore, A-stable.
Section~\ref{sec:taylor} derives the Taylor expansion of the local error.
The main result, the convergence of order 2.5, is established in
Section~\ref{sec:convergence}.
Finally, Section~\ref{sec:numExp} validates and illustrates the theoretical
findings through numerical experiments.
Appendix~\ref{sec:appendix} establishes an auxiliary integral identity used in
the error analysis.

\section{Setting}
\label{sec:setting}
Throughout this paper, we consider an ordinary differential equation of the form
\begin{equation}
  \label{eq:problem}
  \begin{cases}
  u'(t)=f(t,u(t)), &t\in(0,T);\\
  u(0)=u_0.
  \end{cases}
\end{equation}
Hereby, the function $f \colon [0,T] \times \R^d \to \R^d$ and the initial value
$u_0 \in \R^d$ are given.

In the following, we introduce two sets of assumptions on the function $f$.
The first set is sufficient to ensure existence and boundedness of the exact
solution.
Moreover, in this setting we can already show some basic properties for our
numerical approximation. These include the existence of a numerical solution,
the measurability in the random parameter introduced by the method, the
boundedness of the numerical approximation, and its stability.

By $\langle \cdot, \cdot \rangle$ and $\| \cdot \|$ we denote the standard
Euclidean inner product and the Euclidean norm on $\R^d$, respectively.
To keep the notation simple, we also denote the matrix norm on $\R^{d,d}$ and
the tensor norm on $\R^{d,d,d}$ that are induced by the Euclidean norm in the
same fashion.

\begin{assumption}
  \label{ass:f_low}
  The function $f \colon [0,T] \times \R^d \to \R^d$ satisfies the
  following conditions:
  \begin{enumerate}[label={(\alph*)}, ref={\ref{ass:f_low}~(\alph*)}]
    \item\label{ass:f_caratheodory} The function $f$ fulfills a Carath\'{e}odory
      condition, i.e., for every fixed $v \in \R^d$ the function $\R \ni t
      \mapsto f(t, v) \in \R^d$ is measurable and for almost every fixed $t \in
      [0,T]$ the function $\R^d \ni v \mapsto f(t, v) \in \R^d$ is continuous.

    \item \label{ass:f_one_lip_B} The function $f$ fulfills a one-sided
      Lipschitz condition, i.e., there exists a constant $\nu \in \R$ with
      \begin{equation}
        \label{ieq:f_one_lip_B1}
        \inner{f(t,v)-f(t,w)}{v-w} \leq \nu \|v-w\|^2
      \end{equation}
      is fulfilled for all $t \in [0, T]$ and all $v,w \in \mathbb{R}^d$.

    \item \label{ass:f_bound} For every $\kappa >0$, there exists a constant
      $M_{\kappa}>0$ such that
      \begin{equation*}
        \|f(t,v)\| \leq M_{\kappa}
      \end{equation*}
      for all $t \in  [0, T]$ and all $v\in\R^d$ with $\|v\| \leq \kappa$.
  \end{enumerate}
\end{assumption}

The function $f$ is called \emph{contractive} if the inequality
\eqref{ieq:f_one_lip_B1} holds with $\nu=0$.

Assumption~\ref{ass:f_low} is, in general, not sufficient to ensure the existence
of a solution in the classical sense.
Instead, we employ the weaker notion of a solution in the Carath\'{e}odory sense,
i.e., the goal is to find an absolutely continuous function $u \colon [0,T] \to
\R^d$ which satisfies the differential equations \eqref{eq:problem} for almost
every $t \in (0,T)$.
We refer to \cite[Section~I.5]{Hale1980} for more details on this solution
concept.

\begin{lemma}\label{lem:uexist}
  Let Assumption~\ref{ass:f_low} be fulfilled. Then there exists a unique
  solution $u$ to \eqref{eq:problem} in the Carath\'{e}odory sense.
  Moreover, there exists a constant $M_{\mathrm{exact}}$ such that
  \begin{equation}\label{eq:Mexact}
    \| u(t) \|
    \leq \mathrm{e}^{t \nu } \|u_0\|
    + \int_{0}^{t} \mathrm{e}^{(t-\tau) \nu } \|f(\tau,0)\| \diff{\tau}
    \leq \mathrm{e}^{T \max(\nu,0)} \|u_0\|
      + \int_{0}^{T} \mathrm{e}^{(T-\tau) \max(\nu,0)} \|f(\tau,0)\| \diff{\tau}
    =: M_{\mathrm{exact}}
  \end{equation}
  for all  $t\in [0,T]$.
\end{lemma}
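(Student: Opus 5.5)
The plan is to first obtain a local Carathéodory solution, then derive the a priori bound, and finally use that bound to continue the solution to all of $[0,T]$; uniqueness follows from the one-sided Lipschitz condition.

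\textbf{Local existence.} Assumption~\ref{ass:f_caratheodory} gives the Carathéodory condition. Assumption~\ref{ass:f_bound} gives the integrable bound $\|f(t,v)\|\le M_\kappa$ on every ball $\|v\|\le\kappa$. Together these let us invoke the classical Carathéodory existence theorem \cite[Section~I.5]{Hale1980}. It yields an absolutely continuous solution on some interval $[0,t^*)$, and this solution can be continued as long as it stays bounded.

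\textbf{A priori bound.} Let $u$ be a solution on $[0,t^*)$. Since $u$ is absolutely continuous, so is $\|u\|^2$, and for almost every $t$ we have
\begin{align*}
\tfrac12 \tfrac{\mathrm d}{\mathrm dt}\|u(t)\|^2
&= \langle f(t,u(t))-f(t,0),u(t)\rangle + \langle f(t,0),u(t)\rangle\\
&\le \nu\|u(t)\|^2 + \|f(t,0)\|\,\|u(t)\|,
\end{align*}
using \eqref{ieq:f_one_lip_B1} with $w=0$. Dividing by $\|u\|$ is problematic where $u$ vanishes. To avoid this, I would work with $\phi_\varepsilon(t)=\sqrt{\|u(t)\|^2+\varepsilon^2}$, which is absolutely continuous and bounded below by $\varepsilon$. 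For almost every $t$,
\begin{align*}
\phi_\varepsilon'(t) \le \nu\,\frac{\|u(t)\|^2}{\phi_\varepsilon(t)} + \|f(t,0)\|.
\end{align*}
Since $\|u\|^2/\phi_\varepsilon$ differs from $\phi_\varepsilon$ by at most $\varepsilon$, this gives
\begin{align*}
\phi_\varepsilon' \le \nu\phi_\varepsilon + |\nu|\varepsilon + \|f(t,0)\|.
\end{align*}
Applying Grönwall's lemma in differential form (multiply by $\mathrm e^{-\nu t}$ and integrate, which is valid for absolutely continuous functions) gives
\begin{align*}
\phi_\varepsilon(t)\le \mathrm e^{t\nu}\phi_\varepsilon(0)+\int_0^t \mathrm e^{(t-\tau)\nu}\bigl(\|f(\tau,0)\|+|\nu|\varepsilon\bigr)\diff{\tau}.
\end{align*}
Letting $\varepsilon\to0$ yields the first inequality in \eqref{eq:Mexact}.

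\textbf{Second inequality.} This is immediate from $\nu\le\max(\nu,0)$ and $t\le T$. The integral is finite because $\tau\mapsto\|f(\tau,0)\|$ is measurable by Assumption~\ref{ass:f_caratheodory} and bounded by $M_0$ by Assumption~\ref{ass:f_bound}.

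\textbf{Global existence.} The bound $\|u(t)\|\le M_{\mathrm{exact}}$ holds uniformly on the maximal interval of existence. Hence $\|u'(t)\|\le M_{\kappa}$ with $\kappa=M_{\mathrm{exact}}$, so $u$ is Lipschitz. It therefore has a limit at $t^*$ and can be extended, which shows the solution exists on all of $[0,T]$.

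\textbf{Uniqueness.} For two solutions $u,w$, the same computation applied to $\|u-w\|^2$ gives
\begin{align*}
\tfrac{\mathrm d}{\mathrm dt}\|u-w\|^2\le 2\nu\|u-w\|^2 \quad\text{for almost every } t.
\end{align*}
Since $\|u(0)-w(0)\|=0$, Grönwall's lemma gives $u=w$.

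\textbf{Main obstacle.} The delicate point is the differential inequality for $\|u\|$ itself, which is nonsmooth at $u=0$, in the absolutely continuous setting. The $\varepsilon$-regularization above is how I would handle it.
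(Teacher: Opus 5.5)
Your proof is correct. Its skeleton matches the paper's: local Carath\'eodory existence, then a Gr\"onwall argument for $\|u\|$ driven by the one-sided Lipschitz condition with $w=0$. The differences are in how you close the argument and how you handle zeros of $u$.

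\textbf{Existence, uniqueness, continuation.} The paper delegates all three to citations: Deimling's Theorem~8.1 for local existence and uniqueness, and Butcher's Theorem~112G (with 112C replaced by Deimling) for the extension to $[0,T]$. You instead prove uniqueness directly from $\frac{\mathrm d}{\mathrm dt}\|u-w\|^2\le 2\nu\|u-w\|^2$. You also obtain global existence from the a~priori bound itself. This makes the lemma self-contained, at the cost of invoking the standard continuation property of Carath\'eodory solutions.

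\textbf{The norm estimate.} The paper works with $\|u\|$ directly. It divides by $\|u(t)\|$ where $u(t)\neq 0$ and treats zeros of $u$ by a separate pointwise computation. That computation is not right as written. At a zero $t$ of $u$, the quotient $\|u(t+h)\|/h$ tends to $\|u'(t)\|$ as $h\downarrow 0$ but to $-\|u'(t)\|$ as $h\uparrow 0$. So $\|u\|$ is differentiable there only if $u'(t)=0$, in which case its derivative is $0$, not $\|f(t,0)\|$ in general.

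The paper's conclusion is still easily rescued. Zeros with $u'(t)\neq 0$ are isolated and hence form a null set. At the remaining zeros, the required inequality $0\le \|f(t,0)\|$ is trivial.

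Your regularization $\phi_\varepsilon=\sqrt{\|u\|^2+\varepsilon^2}$ avoids this issue entirely. $\phi_\varepsilon$ is absolutely continuous and bounded below by $\varepsilon$. It satisfies $\phi_\varepsilon'\le \nu\phi_\varepsilon+|\nu|\varepsilon+\|f(t,0)\|$ almost everywhere with no case distinction. The only price is the extra term $|\nu|\varepsilon$, which disappears as $\varepsilon\to 0$. Your version of this step is therefore the more rigorous of the two.
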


\begin{proof}
  Existence and uniqueness of a local solution $u$ follows from an
  application of \cite[Theorem~8.1]{Deimling1977}.
  Using \cite[Theorem~112G]{Butcher1987}, we can extend the local solution to
  the global interval $[0,T]$.
  Note that the proof of \cite[Theorem~112G]{Butcher1987} relies on the
  existence result \cite[Theorem~112C]{Butcher1987}, which requires $f$ to be
  continuous.
  To adjust this to our setting, it is sufficient to replace
  \cite[Theorem~112C]{Butcher1987} by \cite[Theorem~8.1]{Deimling1977}.

  It remains to prove the claimed norm bound. Since $u$ is almost everywhere
  differentiable, it follows that $\|u(t)\|\frac{\diff{}}{\diff{t}} \|u(t)\|
  = \frac{1}{2} \frac{\diff{}}{\diff{t}} \|u(t)\|^2 = \inner{u'(t)}{u(t)}$
  for almost every $t\in[0,T]$, see \cite[Lemma~3.2.iv]{Deimling1977}.
  Inserting the differential equation \eqref{eq:problem} then yields
  \begin{align*}
    \|u(t)\|\frac{\diff{}}{\diff{t}} \|u(t) \|
    = \inner{f(t,u(t))}{u(t)} \leq \nu \|u(t)\|^2 + \|f(t,0)\|\|u(t)\|,
    \quad \text{a.e. } t\in [0,T],
  \end{align*}
  where we also made use of Assumption~\ref{ass:f_one_lip_B}.
  For $t \in [0,T]$ such that $u(t) \neq 0$, we divide the inequality by
  $\|u(t)\|$. It then follows that $\frac{\diff{}}{\diff{t}} \|u(t) \|\leq \nu
  \|u(t)\|+\|f(t,0)\|$ for such values of $t$.

  For values $t \in [0,T]$, where $u(t) = 0$ and $u$ is differentiable, we apply
  the fact that the norm is a continuous function, and obtain
  \begin{align*}
    \frac{\diff{}}{\diff{t}}\|u(t)\|
    &= \lim_{h \to 0} \frac{\|u(t+h)\| - \|u(t)\|}{h}
    = \lim_{h \to 0} \frac{\|u(t+h)\|}{h}
    = \Big\| \lim_{h \to 0} \frac{u(t+h) - u(t)}{h} \Big\|
    = \|u'(t)\|.
  \end{align*}
  Thus, for $t \in [0,T]$ where $u$ is differentiable and $u(t) = 0$, we again
  find that $\frac{\diff{}}{\diff{t}} \|u(t)\| = \|f(t,0)\| = \nu \|u(t)\| +
  \|f(t,0)\|$.
  The last step is to apply Gr\"onwall's lemma (compare
  \cite[Lemma~7.3.2]{Emmrich.2004}) to obtain
  \begin{align*}
    \|u(t)\| \leq \mathrm{e}^{\nu t}\|u_0\| +
    \int_0^t \mathrm{e}^{(t-\tau) \nu} \|f(\tau,0)\| \diff{\tau}.
  \end{align*}
  Taking the supremum over $t \in [0,T]$ completes the proof of
  \eqref{eq:Mexact}.
\end{proof}

To prove higher order convergence for the scheme introduced in the next section,
stronger assumptions on $f$ and, specifically, on its partial derivatives are
required.
Before stating these assumptions, we recall the standard multi-index notation
for partial derivatives of $f \colon [0,T] \times \R^d \to \R^d$
as in \cite[Appendix~A]{Evans2010}.
For a multi-index $\bm{\alpha}=(\alpha_0,\dots,\alpha_d)$ with $d+1$ entries, we
consider the order of $\bm{\alpha}$ given by $|\bm{\alpha}| = \sum_{i = 0}^d \alpha_i$.
We then write $f_{\bm{\alpha}} = \partial_0^{\alpha_0} \dots
\partial_{d}^{\alpha_d} f$ for the $|\bm{\alpha}|$-order partial derivative of
$f$.

In the following, there are a few specific partial derivatives that appear
in the analysis often and therefore obtain a shortened notation.
For $i \in \{0,\dots,d\}$, let $e_i$ be the $i$-th unit vector in $\R^{d+1}$.
We then denote the partial derivative with respect to the first argument by
$f_t\coloneqq f_{e_0}$ and the second argument as $f_u:=(f_{e_i})_{i \in \{1,\dots,d\}}$.
Similarly, we define $f_{tt}:=f_{2e_0}$, $f_{tu}:=(f_{e_0+e_i})_{i \in \{1,\dots,d\}}$,
and $f_{uu}:=(f_{e_i + e_j})_{i,j \in \{1,\dots,d\}}$.
Note that $f_u$ and $f_{tu}$ are matrices in $\R^{d,d}$ and $f_{uu}$ is a tensor
in $\R^{d,d,d}$.
We often use the fact that $f_{tu} = f_{ut}$, see
\cite[Theorem~5.2.3.1]{Evans2010}.
If we do not explicitly state that a partial derivative is continuous, it is
understood in the weak sense.

\begin{assumption}\label{ass:f_high_conv}
  The function $f \colon [0,T] \times \R^d \to \R^d$ admits weak partial
  derivatives up to order two and satisfies the following conditions:
  \begin{enumerate}[label={(\alph*)}, ref={\ref{ass:f_high_conv}~(\alph*)}]
    \item \label{ass:f_one_lip_B2}
      The function $f$ fulfills a one-sided Lipschitz condition, i.e., there
      exists a constant $\nu \in \R$ such that for all $t \in [0, T]$ and
      $v,w \in \mathbb{R}^d$
      \begin{equation*}
        \inner{f(t,v)-f(t,w)}{v-w} \leq \nu \|v-w\|^2.
      \end{equation*}

    \item \label{ass:f2_bound}
      For every $\kappa >0$, there exists an $L^2$-integrable function
      $\gamma_{\kappa} \colon [0,T]\to \R\cup\{+\infty\}$ such that for every
      multi-index $\bm{\alpha}$ of order two and every $t\in [0,T]$ and
      $v\in \R^d$ with $\|v\|\leq \kappa$
      \begin{equation}\label{eq:f_bound}
        \|f_{\bm\alpha}(t,v)\|\leq \gamma_{\kappa}(t).
      \end{equation}
    \item \label{ass:f2_hold} There exists $\sigma \in [0,1]$ such
      that for every $\kappa>0$, there exists an $L^2$-integrable function
      $L_{\kappa} \colon [0,T] \to \R\cup\{+ \infty\}$ such that for every
      multi-index $\bm{\alpha}$ of order two
      \begin{equation}\label{eq:f_hold}
        \|f_{\bm\alpha}(t,v)-f_{\bm\alpha}(t,w)\|\leq L_{\kappa}(t) \|v - w\|^{\sigma}
      \end{equation}
      for every $t\in [0,T]$ and $v, w \in\R^d$ with
      $\|v\|, \|w\| \leq \kappa$.
  \end{enumerate}
\end{assumption}

From Assumption~\ref{ass:f_high_conv} and Lemma~\ref{lem:implied_continuity}
it follows that the second derivative of the exact solution $u$ to
\eqref{eq:problem} is continuous and the third derivative is $L^2$-integrable.

\begin{remark}
  To keep the notation simple, we do not distinguish between a mapping and its
  corresponding $L^2$-equivalence class.
\end{remark}

\begin{remark}\label{rem:C}
	We will use a generic constant $C$, which can have different values at
	different places. The only exception is that we have three explicit constants
  $C_1$, $C_2$, and $C_3$ that are stated in Lemma~\ref{lem:alg_bound},
  Lemma~\ref{lem:error_bound}, and Lemma~\ref{lem:B_conv}.
  We state them as they grow exponentially in the error
	constant. Thus, they should not include potentially large constants such as
	the Lipschitz constant of $f$. This helps to ensure that the error constant in
	our error bound remains reasonable.
  It is important to note that all constants are independent of discretization
  parameters introduced later in this paper.
\end{remark}

\begin{lemma} \label{lem:implied_continuity}
  Let Assumption~\ref{ass:f2_bound} be fulfilled. Then the partial
  derivatives $f_t$ and $f_u$ satisfy the following continuity bounds:
  For every $\kappa>0$, there exists an $L^2$-integrable function
  $\tL_{\kappa} \colon [0,T]\to \R\cup\{+\infty\}$
  such that for every $t \in [0,T]$ and $v, w\in \R^d$ with $\|v\|, \|w\|\leq \kappa$
  \begin{align}
    \label{eq:f1_spatial}
    \|f_{t}(t,v)-f_{t}(t,w)\|+\|f_u(t,v)-f_u(t,w)\| \leq \tL_{\kappa}(t)\|v-w\|.
  \end{align}
  Furthermore, there exists a uniform constant $C_{\kappa} > 0$ such that for every
  $s,t\in[0,T]$ and $v \in \R^d$ with $\|v\|\leq \kappa$
  \begin{align}
    \label{eq:f1_temporal}
    \|f_{t}(t,v)-f_{t}(s,v)\|+\|f_u(t,v)-f_u(s,v)\| \leq C_{\kappa} |t-s|^{\half}.
  \end{align}
  Consequently, $f_t$ and $f_u$ admit representatives that are continuous on
  $[0,T] \times \R^d$ and $f$ is locally Lipschitz.
  In particular, for every $\kappa > 0$, there exists a uniform constant
  $C_{\kappa} > 0$ such that
  \begin{align}\label{eq:f0_Lip}
    \|f(t,v)-f(s,w)\| \leq C_{\kappa}(|t-s|+\|v-w\|)
  \end{align}
  for every
  $s,t \in[0,T]$ and $v,w\in\R^d$ with $\|v\|, \|w\|\leq \kappa$.
\end{lemma}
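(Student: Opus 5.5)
The plan is to derive every bound from the fundamental theorem of calculus, applied to the first-order derivatives along straight segments and using the second-order bound \eqref{eq:f_bound}. Since the derivatives are only weak, I first argue for a fixed-$t$ representative on smooth functions. Mollify $f$ in all variables, prove the inequalities for the mollified function $f^\varepsilon$, and pass to the limit $\varepsilon \to 0$ almost everywhere. Because the constants depend only on $\gamma_{\kappa}$ (slightly enlarged, e.g.\ $\gamma_{\kappa+1}$, to absorb the mollification radius), they survive the limit. Alternatively one can use that Sobolev functions are absolutely continuous on almost every line. I will present the computation as if the derivatives were classical.

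\emph{Spatial bound \eqref{eq:f1_spatial}.} Fix $t$ and $v,w$ with $\|v\|,\|w\|\le\kappa$. The segment $w+\theta(v-w)$, $\theta\in[0,1]$, stays in the ball of radius $\kappa$. Then
\begin{align*}
  f_t(t,v)-f_t(t,w)=\int_0^1 f_{tu}(t,w+\theta(v-w))(v-w)\diff{\theta},
\end{align*}
and similarly $f_u(t,v)-f_u(t,w)=\int_0^1 f_{uu}(t,\cdot)[v-w]\diff{\theta}$. Bounding the operator norms entrywise by $\gamma_\kappa(t)$ gives \eqref{eq:f1_spatial} with $\tL_\kappa := C_d\gamma_\kappa$, where $C_d$ is a dimensional constant coming from collecting the entries $f_{\bm\alpha}$ into matrix and tensor norms. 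This function is $L^2$-integrable.

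\emph{Temporal bound \eqref{eq:f1_temporal}.} For fixed $v$ and $s<t$,
\begin{align*}
  f_t(t,v)-f_t(s,v)=\int_s^t f_{tt}(\tau,v)\diff{\tau},
\end{align*}
and analogously for $f_u$ with $f_{ut}=f_{tu}$. By Cauchy--Schwarz,
\begin{align*}
  \int_s^t \gamma_\kappa(\tau)\diff{\tau}\le \|\gamma_\kappa\|_{L^2(0,T)}|t-s|^{1/2},
\end{align*}
so $C_\kappa = C_d\|\gamma_\kappa\|_{L^2}$ works.

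\emph{Continuity and Lipschitz bound.} Combining the two bounds via the triangle inequality, and using that $\tL_\kappa$ is finite at a fixed time (or changing the order of the estimates), I obtain joint continuity of $f_t$ and $f_u$ on $[0,T]\times\R^d$. It remains to bound $f_t$ and $f_u$ uniformly on $[0,T]\times\{\|v\|\le\kappa\}$. This follows from continuity on this compact set, or concretely from a fixed value plus the integral bounds. Finally, for \eqref{eq:f0_Lip}, I write
\begin{align*}
  f(t,v)-f(s,w)=\int_0^1\big(f_t(\cdot)(t-s)+f_u(\cdot)(v-w)\big)\diff{\theta}
\end{align*}
along the segment from $(s,w)$ to $(t,v)$ and insert the uniform bounds. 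In particular $f$ is locally Lipschitz.

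The main obstacle is the justification step. The weak derivatives must be turned into pointwise statements valid for every $t$, not just almost every $t$, and one must pick a representative of $f$ consistent with the Carathéodory framework. The mollification limit handles this, but it requires care that the estimates are uniform in $\varepsilon$.
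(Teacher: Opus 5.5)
Your route is the paper's: the fundamental theorem of calculus along the segment $w+\theta(v-w)$, giving $\tL_\kappa = C_d\gamma_\kappa$; the fundamental theorem in time plus Cauchy--Schwarz for the $|t-s|^{1/2}$ bound; then continuity and the Lipschitz estimate. Your mollification remark adds care the paper does not take, since the paper applies the fundamental theorem of calculus to the weak derivatives pointwise without comment. Your segment argument in $(t,v)$-space for \eqref{eq:f0_Lip} also spells out what the paper only asserts.

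The one step that fails as written is joint continuity. $\tL_\kappa$ is only in $L^2$ and may equal $+\infty$ at individual times, so you cannot use that it is finite at the fixed time $t$. Moving the spatial estimate to the varying time $s$, via $\|g(t,v)-g(s,w)\|\le \|g(t,v)-g(s,v)\|+\|g(s,v)-g(s,w)\|$, fails as well, because $\tL_\kappa(s)$ need not stay bounded as $s\to t$.

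The missing idea, which the paper uses, is that the temporal bound is uniform in $v$. Pick $s_n\to t$ in the full-measure set $\mathcal{T}$ where $\tL_\kappa<\infty$. Then $g(s_n,\cdot)\to g(t,\cdot)$ uniformly on the closed $\kappa$-ball, so $g(t,\cdot)$ is continuous for every $t$, including the bad ones. Joint continuity then follows from $\|g(t,v)-g(s,w)\|\le\|g(t,v)-g(t,w)\|+C_\kappa|t-s|^{1/2}$. Equivalently, route the triangle inequality through one intermediate good time $s'$ close to $t$.

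That same three-point device is what gives you the $\varepsilon$-uniform estimates you flagged in the mollification step. Choose $s'$ within $\delta$ of $t$ with $(\gamma_{\kappa+1}*\rho_\varepsilon)(s')\le C\delta^{-1/2}$, which exists by Cauchy--Schwarz. Then take $\delta=\|v-w\|$. This yields a spatial modulus $C\|v-w\|^{1/2}$ independent of $\varepsilon$. Combined with the temporal bound, Arzel\`a--Ascoli produces the continuous representatives, so the pointwise statements hold for every $t$.
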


\begin{proof}
  Let $\kappa>0$ be fixed and let $g$ represent either $f_t$ or $f_u$.

  First, we prove the estimate \eqref{eq:f1_spatial}.
  Take $v, w \in \R^d$ with $\|v\|, \|w\| \le \kappa$ arbitrarily.
  Since the ball of radius $\kappa$ is convex, the line segment $\theta v + (1-\theta)w$
  lies entirely within this ball for all $\theta \in [0,1]$.
  By the Fundamental Theorem of Calculus, we have
  \begin{equation*}
    g(t,v) - g(t,w) = \int_0^1 g_u\big(t, w + \theta(v-w)\big) (v-w) \diff{\theta},
  \end{equation*}
  where the spatial gradient $g_u$ corresponds to the second-order partial
  derivatives $f_{tu}$ (if $g=f_t$) or $f_{uu}$ (if $g=f_u$).
  Taking the norm and applying Assumption~\ref{ass:f2_bound},
  there exists a constant $C>0$ that depends on the dimension $d$ of the underlying space such that
  \begin{equation*}
    \|g(t,v) - g(t,w)\| \leq  \int_0^1 C \gamma_{\kappa}(t) \|v-w\| \diff{\theta}
    =  C \gamma_{\kappa}(t) \|v-w\|
  \end{equation*}
  for every $t \in [0,T]$.
  Summing the bounds for $f_t$ and $f_u$ yields \eqref{eq:f1_spatial} with
  $L_{\kappa}(t) := 2 C \gamma_{\kappa}(t)$, which is $L^2$-integrable since
  $\gamma_{\kappa} \in L^2(0,T)$.

  To prove \eqref{eq:f1_temporal}, let $s, t \in [0,T]$ with $s \le t$.
  Using the Fundamental Theorem of Calculus with respect to
  the temporal variable, we obtain
  \begin{equation*}
    g(t,v) - g(s,v) = \int_s^t g_t(\tau, v) \diff{\tau}.
  \end{equation*}
  Here, the partial derivative $g_t$ corresponds to the second-order
  partial derivatives $f_{tt}$ (if $g=f_t$) or $f_{ut}$ (if $g=f_u$).
  Taking the norm, applying Assumption~\ref{ass:f2_bound},
  and utilizing the Cauchy--Schwarz inequality, there again exists a constant $C>0$ that depends on the dimension $d$ such that
  \begin{align*}
    \|g(t,v) - g(s,v)\|
    \leq \int_s^t \| g_t(\tau, v)\| \diff{\tau}
    \leq  \int_s^t C \gamma_{\kappa}(\tau) \cdot 1 \diff{\tau}
    \leq
    C \|\gamma_{\kappa}\|_{L^2(0,T)} |t-s|^{\frac{1}{2}}.
  \end{align*}
  Summing the respective bounds for $f_t$ and $f_u$ yields the second assertion
  with the uniform constant $C_{\kappa} := 2 C \|\gamma_{\kappa}\|_{L^2(0,T)}$.

  It remains to show that $g$ is jointly continuous on $[0,T] \times \R^d$.
  Due to \eqref{eq:f1_spatial}, $g(t, \cdot)$ is Lipschitz continuous
  (and hence continuous) for all $t \in \mathcal{T}$, where $\mathcal{T} \subset [0,T]$
  is a set of full measure.
  Since $\mathcal{T}$ has full measure, it is dense in $[0,T]$.

  Let $t \in [0,T]$ be arbitrary and choose a sequence $(s_n)_{n \in \N}
  \subset \mathcal{T}$ such that $s_n \to t$.
  Due to \eqref{eq:f1_temporal}, we have the uniform bound
  \begin{equation*}
    \sup_{v \in \R^d, \|v\|\leq \kappa} \|g(t, v) - g(s_n, v)\| \le C_{\kappa} |t - s_n|^{\half}.
  \end{equation*}
  As $n \to \infty$, the right-hand side vanishes, implying that the sequence of
  spatially continuous functions $g(s_n, \cdot)$ converges uniformly to
  $g(t, \cdot)$ on $B_{\kappa}(0)$,
  where $B_{\kappa}(0) \subset \R^d$ is the closed ball of radius $\kappa$.
  Since the uniform limit of continuous functions is continuous, we
  conclude that $g(t, \cdot)$ is continuous on $B_{\kappa}(0)$ for every $t \in [0,T]$.

  To establish continuity, let $(t,v) \in [0,T] \times B_{\kappa}(0)$ and let
  $(s, w) \to (t,v)$. By the triangle inequality,
  \begin{equation*}
    \|g(t,v) - g(s,w)\| \leq \|g(t,v) - g(t,w)\| + \|g(t,w) - g(s,w)\|.
  \end{equation*}
  The first term vanishes as $w \to v$ because $g(t, \cdot)$ is continuous.
  The second term is bounded by $C_{\kappa} |t - s|^{\half}$, which
  vanishes as $s \to t$ uniformly with respect to $w$.
  Thus, $g$ is continuous on $[0,T] \times B_{\kappa}(0)$.
  Since $\kappa$ was arbitrary, continuity holds on $[0,T] \times \R^d$.
  From the continuity of $f_t$ and $f_u$, it follows that $f$ is locally
  Lipschitz continuous.
\end{proof}

\section{A randomized implicit Runge--Kutta method}
\label{sec:method}
First, we recall that a \emph{Runge--Kutta method} (or \emph{RK method}) with
$s \in \N$ stages is a one-step method of the general form
\begin{align}
  \label{eq:RKdef}
  \begin{split}
    u^{n+1} &= u^n + h \sum_{i = 1}^s b_i k_{n+1}^{i}\\
    k_{n+1}^{i} &= f\Big(t_n+c_i h, u^{n} +
    h\sum_{j = 1}^s a_{i,j} k_{n+1}^{j}\Big),
    \quad \text{for} \; i \in \{ 1,\ldots,s\}.
  \end{split}
\end{align}
Hereby, the coefficient vectors $b = (b_i)_{i\in \{1,\dots,s\}} \in \R^s$, $c = (c_i)_{i\in \{1,\dots,s\}} \in \R^s$, and the matrix
$A = (a_{i,j})_{i,j=1,\ldots,s} \in \R^{s,s}$ are called
the weights, the nodes, and the Runge--Kutta matrix of the scheme
\eqref{eq:RKdef}, respectively.
The real parameter $h > 0$ denotes the step size.
Moreover, the RK method \eqref{eq:RKdef} is fully characterized by its
Butcher tableau
\begin{equation}
  \label{eq:ButcherTab}
  \begin{array}{c|c}
    c & A   \\ \hline
      & b^\top
  \end{array}\;.
\end{equation}
The method is an \emph{implicit RK method} if $A$
has an entry $a_{i,j} \neq 0$ with $i \le j$.
In this paper, we focus on the family of \emph{singly diagonally implicit
RK methods} with $s = 2$ stages represented by the Butcher tableau
\begin{equation}\tag{Det}
  \label{scheme:det_fam_SDIRK}
  \begin{array}{c|cc}
    x   & x  & 0   \\
    1-x & 1-2x  & x \\ \hline
    & \frac12 & \frac12
  \end{array}
\end{equation}
with a fixed parameter $x\in[\half,1]$.
It will be convenient to write the method \eqref{scheme:det_fam_SDIRK} in the
form
\begin{align*}
  u^{n+1} = \Phi(t_n, h, x, u^n), \quad n \in \{0,1,2,\ldots\}
\end{align*}
also highlighting the dependence of $u^{n+1}$ on the choice of the parameter $x$.
Hereby, the one-step operator
$\Phi \colon [0,T] \times \R^+ \times [\frac{1}{2},1]\times \R^d \to \R^d$
is defined by
\begin{align}
  \label{eq:Phidef}
    \Phi(t,h,x,\genvec) &= \genvec + \frac{h}{2} \big(\kone(t,h,x,\genvec)+ \ktwo(t,h,x,\genvec)\big),
\end{align}
for $(t,h,x,\genvec) \in [0,T]\times \R^+ \times [\frac{1}{2},1] \times \R^d$,
where $\kone(t,h,x,\genvec), \ktwo(t,h,x,\genvec) \in \R^d$ are
the solutions to the implicit equations
\begin{align}
  \label{eq:kdef}
  \begin{split}
    \kone(t,h,x,\genvec) &= f\big(t+h x, \genvec+h x \kone(t,h,x,\genvec)\big),\\
    \ktwo(t,h,x,\genvec) &= f\big(t+h (1-x), \genvec+h (1-2x) \kone(t,h,x,\genvec)+h x \ktwo(t,h,x,\genvec)\big).
  \end{split}
\end{align}

\begin{remark}
  \label{rem:order_conds}
  The family of Runge--Kutta schemes \eqref{scheme:det_fam_SDIRK} has
  been studied extensively; see \cite[Chapter~IV.6]{Hairer1996}.
  In particular, for every fixed choice of $x \in [\half, 1]$ the method is
  convergent of order two. Indeed, the scheme \eqref{scheme:det_fam_SDIRK}
  satisfies the first and second-order conditions given by
  \begin{align*}
    \sum_{i=1}^2 b_i = \frac{1}{2} + \frac{1}{2} = 1
    \quad \text{and} \quad \sum_{j=1}^{2}\sum^2_{k=1}b_ja_{jk}
    = \frac{1}{2} (x + 0) + \frac{1}{2} (1 - 2x + x)
    = \half,
  \end{align*}
  respectively.
  Further, we have $c_i = \sum_{j=1}^2 a_{ij}$ for every $i \in \{1,2\}$.

  Moreover, for the particular value $x = \frac{1}{2}(1+\frac{\sqrt{3}}{3})$,
  the method is even convergent of order three.
  In fact, the third-order conditions are given by
  \begin{align*}
    \sum_{i=1}^{2} b_i c_i^2
    = x^2 - x + \frac{1}{2} \stackrel{!}{=} \frac{1}{3},
    \quad \text{and} \quad
    \sum_{i=1}^{2} \sum_{j=1}^{2} b_i a_{ij} c_j
    = x - x^2 \stackrel{!}{=} \frac{1}{6}.
  \end{align*}
  \qed
\end{remark}

After these preparations, we are in a position to introduce the
\emph{randomized diagonally implicit RK method} with $s = 2$ stages studied
throughout this paper.
To this end, let $\xi$ be a random variable with uniform distribution $\xi \sim
\mathcal{U}(\frac{1}{2},1)$. Then, we consider the RK method determined by the
Butcher tableau
\begin{equation}\tag{Rand}
  \label{scheme:SDIRK}
  \begin{array}{c|cc}
    \xi   & \xi  & 0   \\
    1-\xi & 1-2\xi  & \xi \\ \hline
    & \frac12 & \frac12
  \end{array}.
\end{equation}
In other words, our scheme \eqref{scheme:SDIRK} is a randomized version of
\eqref{scheme:det_fam_SDIRK} where the parameter $x$ is
replaced by the random variable $\xi$. Consequently, most of the entries of the
Runge--Kutta matrix and all of the nodes become random variables.
This also implies that the deterministic exact solution $u(t_n)$ is
approximated by a random variable $u^n$.

For the implementation and for the error analysis of the proposed randomized
scheme \eqref{scheme:SDIRK}, it is of most importance that an
independently generated sample of the random variable $\xi \sim
\mathcal{U}(\frac{1}{2},1)$ is used in every time step. This is summarized in the following assumption.

\begin{assumption}\label{ass:rand_var}
	Let $(\Omega,\F,(\F_{n})_{n\in\N},\P)$ be a complete,
	filtered probability space. Moreover, let $(\xi_n)_{n\in \N}$ be a family of independent, uniformly distributed random variables with values in $[\frac{1}{2},1]$ such that $\xi_{n}$ is $\F_{n}$-measurable, but is independent of $\F_{n-1}$ for every $n \in \N$.
\end{assumption}

Note that $\F_n$ can, for example, be defined as the natural filtration $\F_n = \sigma(\xi_1, \ldots, \xi_n)$.
Then the one-step method associated to
the Butcher tableau \eqref{scheme:SDIRK} is written as
\begin{align*}
  u^{n+1} = \Phi(t_n, h, \xi_{n+1}, u^n), \quad n \in \{0,1,2,\dots,N-1\},
\end{align*}
where $\Phi$ is the (deterministic) one-step operator defined in
\eqref{eq:Phidef}.
In Section~\ref{sec:existence} we show that this method is well-defined if
Assumption~\ref{ass:f_low} is satisfied.

\begin{remark}
  \label{rem:order_conds2}
  In light of Remark~\ref{rem:order_conds}, the second-order conditions
  are satisfied by the randomized scheme \eqref{scheme:SDIRK} for every
  realization of $\xi$ in $[\half, 1]$.
  Moreover, while the third-order conditions are not fulfilled for almost every
  realization of $\xi$, they hold true in expectation.
  More precisely, for $\xi \sim \mathcal{U}(\frac{1}{2},1)$, it follows that
  $\E[\xi^2 - \xi + \frac{1}{2}] = \frac{1}{3}$
  and $\E[\xi - \xi^2] = \frac{1}{6}$.
  \qed
\end{remark}

In the next section, we will show that our method \eqref{scheme:SDIRK} is
A-stable.
It is therefore not necessary to impose a step size restriction for stability
reasons when dealing with stiff problems. However, we will impose a mild
step size restriction to ensure that the implicit equations \eqref{eq:kdef}
are solvable.
To be more precise, if $\nu \in \R$ is the one-sided Lipschitz condition from
Assumption~\ref{ass:f_one_lip_B}, we assume that the time step $h$ is in the
interval $[0,h_{\max}]$, where $h_{\max}$ is given by
\begin{equation}
  \label{eq:def_hmax}
  h_{\max}=
  \begin{cases}
    \min \{\alpha, T \} & \text{if } \nu>0\\
    T & \text{if } \nu\leq 0
  \end{cases}
\end{equation}
for $\alpha<\nu^{-1}$.
Such a step size restriction is not uncommon for implicit RK-methods, compare
\cite[Theorem~IV.14.2]{Hairer1996}.

\section{Existence and measurability of the numerical solution}
\label{sec:existence}

The main goal of this section is to prove that the randomized SDIRK method
\eqref{scheme:SDIRK} is well-defined.
Due to the randomness introduced through the method, the
numerical approximation also depends on the family of random parameters
$(\xi_n)_{n\in \N}$.

We begin the section with two auxiliary results that we will use to prove the
existence of a measurable approximation.
The first lemma will be used to guarantee the existence of a solution
to the implicit equations in \eqref{eq:kdef} required for the definition
\eqref{eq:Phidef} of the one-step operator $\Phi$ of the RK method.
The second will ensure that the numerical solution defined by
\eqref{scheme:SDIRK} is measurable with respect to the random parameter.

\begin{lemma}
	\label{lem:circ_exist}
	For $R\in(0,\infty)$, let $F$ be defined on the ball $B_R(0) \subseteq
	\R^d$ around the origin with radius $R$ and have values in $\R^d$.
	Additionally, let $F$ be continuous and fulfil the condition
	\begin{equation} \label{eq:posF_lem_ex}
		\inner{F(v)}{v}\geq 0 \quad \text{for every $v$ in the boundary
			$\partial B_R(0)$ of } B_R(0).
	\end{equation}
	Then there exists at least one $v\in\R^d$ with $\|v\| \le R$
	such that $F(v)=0$.
\end{lemma}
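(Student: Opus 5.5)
This is the classical acute-angle lemma, a consequence of Brouwer's fixed point theorem. The plan is to argue by contradiction. Suppose $F(v)\neq 0$ for every $v$ in the closed ball $B_R(0)$. Then the map
\begin{align*}
  G \colon B_R(0) \to B_R(0), \qquad G(v) = -R\,\frac{F(v)}{\|F(v)\|},
\end{align*}
is well defined and continuous, since $F$ is continuous and does not vanish. It also maps the ball into itself, because $\|G(v)\| = R$ for every $v$. The closed ball is a nonempty, compact and convex subset of $\R^d$, so Brouwer's fixed point theorem gives some $v^\ast \in B_R(0)$ with $G(v^\ast) = v^\ast$.

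Next we extract the contradiction.
\begin{itemize}
  \item Since $\|v^\ast\| = \|G(v^\ast)\| = R$, the fixed point lies on the boundary $\partial B_R(0)$, so condition \eqref{eq:posF_lem_ex} applies to it.
  \item On the other hand, inserting the fixed point identity gives
  \begin{align*}
    \inner{F(v^\ast)}{v^\ast}
    = -\frac{R}{\|F(v^\ast)\|}\,\|F(v^\ast)\|^2
    = -R\,\|F(v^\ast)\| < 0 .
  \end{align*}
\end{itemize}
This contradicts \eqref{eq:posF_lem_ex}. Hence $F$ must vanish at some $v$ with $\|v\|\le R$.

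The argument has no real obstacle. The only points needing care are:
\begin{itemize}
  \item reading $B_R(0)$ as the closed ball, so that compactness holds and the boundary belongs to the domain of $F$;
  \item observing that the contradiction hypothesis is exactly what makes the normalisation in $G$ legitimate.
\end{itemize}
Alternatively, one could cite the standard acute-angle (or Poincaré--Miranda type) lemma directly, for instance from Deimling's monograph already used in the proof of Lemma~\ref{lem:uexist}. The self-contained Brouwer argument above is short enough to be preferable.
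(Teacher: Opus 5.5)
Your proof is correct and matches the argument the paper relies on. The paper gives no proof of its own but cites Evans, Section~9.1, where this lemma is proved in the same way: by contradiction, using the normalised map $-R F(v)/\|F(v)\|$, Brouwer's fixed point theorem, and the sign contradiction at the boundary fixed point. Your caveat that $B_R(0)$ must be read as the closed ball is the right one.
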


A proof can be found in \cite[Section~9.1]{Evans2010}.

\begin{lemma}
	\label{lem:meas}
	Let $(\Omega, \widehat{\F}, \P)$ be a complete
	probability space.
	Let $\Lambda \colon \Omega \times \R^d \to \R^d$ be such that the
	following conditions are fulfilled:
	\begin{enumerate}
		\item The function $v\mapsto \Lambda(\omega,v)$ is continuous for every
		$\omega\in \Omega$.
		\item The function $\omega\mapsto \Lambda(\omega,v)$ is
		$\widehat{\mathcal{F}}$-measurable for every $v\in\R^d$.
		\item For every $\omega\in \Omega$, there exists a unique root of the
		function $\Lambda(\omega,\cdot)$.
	\end{enumerate}
	Define the function
	\begin{equation*}
		r \colon \Omega \rightarrow\R^d,\quad \omega\mapsto r(\omega),
	\end{equation*}
	where $r(\omega)$ is the unique root of $\Lambda(\omega,\cdot)$.
	Then $r$ is $\widehat{\mathcal{F}}$-measurable.
\end{lemma}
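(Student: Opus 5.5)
To show that $r$ is $\widehat{\F}$-measurable, it suffices to show that $r^{-1}(K) \in \widehat{\F}$ for every closed set $K \subseteq \R^d$, since closed sets generate the Borel $\sigma$-algebra. The plan is to write this preimage as a countable combination of measurable sets built from the Carath\'eodory function $\Lambda$. The natural route is to characterize $\{\omega : r(\omega) \in K\}$ as $\{\omega : \exists v \in K \text{ with } \Lambda(\omega,v)=0\}$. This equality holds because the root is unique, so a root exists in $K$ exactly when $r(\omega)\in K$.

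The difficulty is that the existential quantifier runs over an uncountable set, and $K$ may be unbounded. I would first reduce to compact $K$. Every closed $K$ is the countable union of the compact sets $K \cap B_m(0)$, and preimages commute with unions, so it is enough to treat compact $K$.

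For compact $K$, let $D \subseteq K$ be a countable dense subset. I claim that
\begin{equation*}
  \{\omega : \exists v\in K,\ \Lambda(\omega,v)=0\}
  = \bigcap_{k\in\N} \bigcup_{v \in D} \{\omega : \|\Lambda(\omega,v)\| < 1/k\}.
\end{equation*}
The inclusion ``$\subseteq$'' follows from continuity of $\Lambda(\omega,\cdot)$ at the root together with density of $D$. For ``$\supseteq$'', for each $k$ pick $v_k\in D$ with $\|\Lambda(\omega,v_k)\|<1/k$. By compactness a subsequence converges to some $v\in K$, and continuity gives $\Lambda(\omega,v)=0$.

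The right-hand side is measurable: each map $\omega\mapsto \|\Lambda(\omega,v)\|$ is measurable by assumption (2), and only countable unions and intersections are taken. Hence $r^{-1}(K)\in\widehat{\F}$, and $r$ is measurable.

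The main obstacle I expect is the compactness step. It is exactly where the reduction to bounded $K$ is needed, since for unbounded $K$ an approximate-root sequence could escape to infinity. Completeness of the probability space is not even needed in this argument. Alternatively, one could invoke a measurable selection theorem such as Kuratowski--Ryll-Nardzewski, but the direct argument above seems cleaner.
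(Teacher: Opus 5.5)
Your proof is correct, but it takes a different route from the paper. The paper gives no argument of its own and defers to \cite[Lemma~4.3]{Eisenmann2019}, a lemma from the setting of nonlinear evolution equations, where the unknown lives in an infinite-dimensional space.

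Results of that type are usually obtained with heavier measure-theoretic machinery. The Carath\'eodory map $\Lambda$ is jointly $\widehat{\F}\otimes\mathcal{B}(\R^d)$-measurable, and the graph of $r$ is its zero set. A measurable projection or measurable selection theorem then gives measurability, with uniqueness of the root identifying the selection with $r$. Those theorems are formulated on complete measure spaces, which is presumably why completeness appears in the statement.

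You instead use that closed bounded subsets of $\R^d$ are compact. This lets you turn the uncountable quantifier in $r^{-1}(K)=\{\omega : \exists v\in K,\ \Lambda(\omega,v)=0\}$ into $\bigcap_{k}\bigcup_{v\in D}$ of sets that are measurable by hypothesis~(2).

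Your approach gives a short, self-contained proof and shows that completeness of $(\Omega,\widehat{\F},\P)$ is not needed. It gives up generality: extracting a norm-convergent subsequence of approximate roots fails in infinite dimensions, so your argument would not reproduce the cited lemma in its original setting. For the finite-dimensional application in Lemma~\ref{lem:exist_measble}, it is fully sufficient.
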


For a proof, we refer to \cite[Lemma~4.3]{Eisenmann2019}.
We can now combine these auxiliary results.

\begin{lemma}
	\label{lem:exist_measble}
	Let Assumption~\ref{ass:f_low} be fulfilled and let $(\Omega,
	\widehat{\F}, \P)$ be a complete probability space.
	Let $t \in [0,T]$ and $h \in [0,\min(T-t,h_{\max})]$ be given.
	Moreover, let $\xi$ and $\randvec$ be $\widehat{\mathcal{F}}$-measurable
	random variables with values in $[\half,1]$ and $\R^d$, respectively.
	Then the functions $\omega \mapsto \kone(t,h,\xi(\omega),\randvec(\omega))$
	and $\omega \mapsto \ktwo(t,h,\xi(\omega),\randvec(\omega))$ are well-defined
	and $\widehat{\mathcal{F}}$-measurable.
	Thus, the function $\omega \mapsto \Phi(t,h,\xi(\omega),\randvec(\omega))$ is
	also well-defined and $\widehat{\mathcal{F}}$-measurable.
	Furthermore, for every $K\ge0$ and $\omega \in \Omega$ with
	$\|\randvec(\omega)\|\leq K$, there exist $M_{K_1}, M_{K_2}>0$
	(compare Assumption~\ref{ass:f_low}) such that
	\begin{align}\label{eq:k12_bound}
		\begin{split}
			\|\kone(t,h,\xi(\omega),\randvec(\omega)) \|
			& \leq (1- h_{\max} \max(0, \nu))^{-1} M_{K_1},\\
			\|\ktwo(t,h,\xi(\omega),\randvec(\omega)) \|
			& \leq (1- h_{\max} \max(0, \nu))^{-1} M_{K_2},\\
		\end{split}
	\end{align}
	where $K_1 = K$ and $K_2 = K_1 + h_{\max}(1-h_{\max}\max(0,\nu))^{-1} M_{K_1}$.
\end{lemma}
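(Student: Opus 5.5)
We treat the two stages one after the other, since the tableau is lower triangular. For fixed $\omega$, abbreviate $x=\xi(\omega)$ and $v=\randvec(\omega)$, and introduce the map
\begin{equation*}
  \Lambda_1(\omega,k) := k - f\big(t+hx, v + hxk\big), \qquad k \in \R^d.
\end{equation*}
By Lemma~\ref{lem:implied_continuity} we cannot argue here, since only Assumption~\ref{ass:f_low} is available. We therefore rely on the Carath\'eodory condition, which gives continuity of $f(\tau,\cdot)$ for almost every $\tau$. For the exceptional times we should either adopt the convention that $f(\tau,\cdot)$ is continuous for every $\tau$, or redefine $f$ on a null set in time; I would add a short remark to this effect. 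With continuity in $k$ in hand, we check the sign condition of Lemma~\ref{lem:circ_exist}. Using Assumption~\ref{ass:f_one_lip_B} with the points $v+hxk$ and $v$, and Assumption~\ref{ass:f_bound} with $\|v\|\le K_1$, we get
\begin{equation*}
  \inner{\Lambda_1(\omega,k)}{k}
  = \|k\|^2 - \tfrac{1}{hx}\inner{f(\cdot,v+hxk)-f(\cdot,v)}{hxk} - \inner{f(\cdot,v)}{k}
  \ge (1-hx\nu)\|k\|^2 - M_{K_1}\|k\|.
\end{equation*}
Since $x\le 1$ and $h\le h_{\max}$, we have $1-hx\nu\ge 1-h_{\max}\max(0,\nu)>0$. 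The right-hand side is therefore nonnegative on the sphere of radius $R_1=(1-h_{\max}\max(0,\nu))^{-1}M_{K_1}$. Lemma~\ref{lem:circ_exist} then provides a root with $\|k\|\le R_1$, which is the first bound in \eqref{eq:k12_bound}.

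Uniqueness follows from the same monotonicity estimate. If $k,\tilde k$ are two roots, then
\begin{equation*}
  \|k-\tilde k\|^2 = \inner{f(\cdot,v+hxk)-f(\cdot,v+hx\tilde k)}{k-\tilde k} \le hx\nu\|k-\tilde k\|^2,
\end{equation*}
and $hx\nu<1$ forces $k=\tilde k$. For measurability we apply Lemma~\ref{lem:meas} to $\Lambda_1$ on $(\Omega,\widehat\F,\P)$. Condition (1) of that lemma holds by the continuity just discussed, and condition (3) is the uniqueness just shown. Condition (2) requires $\omega\mapsto f(t+h\xi(\omega), \randvec(\omega)+h\xi(\omega)k)$ to be measurable. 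This holds because a Carath\'eodory function is jointly Borel measurable, and the composition with the measurable map $\omega\mapsto(t+h\xi(\omega),\randvec(\omega)+h\xi(\omega)k)$ is then measurable. I expect this justification, namely the joint measurability of Carath\'eodory maps and the handling of the null set of times, to be the main technical obstacle.

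For the second stage we set $w := v + h(1-2x)\kone$ and solve $\ktwo = f(t+h(1-x), w + hx\ktwo)$ with the identical argument. We only need a bound on $\|w\|$. Since $|1-2x|\le 1$, we have $\|w\|\le K_1 + h_{\max}R_1 = K_2$. Moreover, $w$ is measurable in $\omega$ because $\kone$ already is, so $\Lambda_2(\omega,k)=k-f(t+h(1-x),w(\omega)+hxk)$ again satisfies the hypotheses of Lemma~\ref{lem:meas}. This yields the bound $(1-h_{\max}\max(0,\nu))^{-1}M_{K_2}$ for $\ktwo$, together with its uniqueness and measurability. Finally, $\Phi$ is a measurable combination of $\randvec$, $\kone$, and $\ktwo$ by \eqref{eq:Phidef}, so it is well-defined and $\widehat\F$-measurable.
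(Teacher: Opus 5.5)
Your proposal is correct and follows the paper's proof essentially step by step. Existence of $k^1$ comes from Lemma~\ref{lem:circ_exist} with radius $R_1=(1-h_{\max}\max(0,\nu))^{-1}M_{K_1}$, obtained from the one-sided Lipschitz estimate; uniqueness follows from the same estimate and measurability from Lemma~\ref{lem:meas}. The argument is then repeated for $k^2$ around the base point $v+h(1-2x)k^1$, whose norm is at most $K_2$.

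Your remarks on the exceptional null set of times in the Carath\'eodory condition, and on the joint measurability of $f$ needed for condition (2) of Lemma~\ref{lem:meas}, make explicit two points the paper passes over. The paper simply asserts continuity of $f(\tau,\cdot)$ for every fixed time and says measurability holds ``similarly''. The case $h=0$, where your division by $hx$ is not allowed, is trivial.
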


\begin{proof}
	Let $K \ge 0$ and $\omega \in \Omega$ be such that
	$\|\randvec(\omega)\| \le K$. We first show that there exists a uniquely
	defined $\kone = \kone(t,h,\xi(\omega),\randvec(\omega)) \in \R^d$ satisfying
	the first equation in \eqref{eq:kdef}.

	By definition, the vector $\kone$ is a root of the function
	\begin{equation}\label{eq:g_1}
		\genvec \mapsto g_1(t,h,\xi(\omega),\randvec(\omega);\genvec)
		:=\genvec-f(t+h \xi(\omega),\randvec(\omega)+h \xi(\omega) \genvec).
	\end{equation}
	We will prove the existence of $\kone$ by making use of
	Lemma~\ref{lem:circ_exist}. To this end, we verify that
	$\genvec \mapsto g_1(t,h,\xi(\omega),\randvec(\omega);\genvec)$ fulfills
	condition \eqref{eq:posF_lem_ex} for a suitable radius $R$.
	Applying the one-sided Lipschitz condition from
	Assumption~\ref{ass:f_one_lip_B}, we observe that
	\begin{align*}
		&\inner{g_1(t,h,\xi(\omega),\randvec(\omega);\genvec)}{\genvec}\\
		&\quad= \|\genvec\|^2 - \inner{f(\tone,\randvec(\omega)+h \xi(\omega) \genvec)
			- f(\tone,\randvec(\omega))}{\genvec}
		- \inner{f(\tone,\randvec(\omega))}{\genvec}\\
		&\quad \geq (1- h \xi(\omega)\nu) \|\genvec\|^2
		- \|f(\tone,\randvec(\omega))\|\|\genvec\|.
	\end{align*}
	Due to Assumption~\ref{ass:f_bound} and the bound $\|\randvec(\omega)\| \le K$,
	we have $\|f(\tone,\randvec(\omega))\| \le M_K$.
	Inserting the assumption $h \xi(\omega)\nu \leq h_{\max} \max(0,\nu) < 1$,
	it follows that
	\begin{equation*}
		\inner{g_1(t,h,\xi(\omega),\randvec(\omega);\genvec)}{\genvec}\geq 0
	\end{equation*}
	for every $\genvec\in\R^d$ with
	$\|\genvec\| \geq (1-h_{\max} \max(0,\nu))^{-1} M_K$.
	Thus, we can apply Lemma~\ref{lem:circ_exist} with
	the radius $R_1 := (1-h_{\max} \max(0,\nu))^{-1} M_K$.
	This verifies the existence of a root $\kone =
	\kone(t,h,\xi(\omega),\randvec(\omega)) \in \R^d$
	of the function $g_1$ defined in \eqref{eq:g_1} with
	\begin{equation*}
		\|\kone(t,h,\xi(\omega),\randvec(\omega))\|
		\leq (1- h_{\max} \max(0,\nu))^{-1} M_{K} = R_1.
	\end{equation*}
	With this at hand, we verify that the mapping
	$\omega \mapsto \kone(t,h,\xi(\omega),\randvec(\omega))$ is
	$\widehat{\mathcal{F}}$-measurable by an application of
	Lemma~\ref{lem:meas}.
	Since $f$ satisfies a Carath\'{e}odory condition, it follows that
	the mapping $\genvec \mapsto g_1(t,h,\xi(\omega),\randvec(\omega);\genvec)$ is
	continuous for every fixed $t \in [0,T]$, $h \leq \min(T-t, h_{\max})$,
	and $\omega \in \Omega$ with $\|\randvec(\omega)\|\le K$.
	Similarly, the mapping $\omega \mapsto g_1(t,h,\xi(\omega),\randvec(\omega);\genvec)$
	is measurable.

	Hence, it remains to show that the root $\kone$ of the function $g_1$
	is unique.
	Suppose there exist two different roots
	$\kone = \kone(t,h,\xi(\omega),\randvec(\omega))$
	and $\overline{\kone} = \overline \kone(t,h,\xi(\omega),\randvec(\omega))$.
	Then using the one-sided Lipschitz condition from
	Assumption~\ref{ass:f_one_lip_B}, we obtain
	\begin{align*}
		\|\kone - \overline \kone \|^2
		&= \inner{f(\tone,\randvec(\omega)+h \xi(\omega)\kone)
			- f(\tone ,\randvec(\omega)+h \xi(\omega) \overline \kone)}{\kone -
			\overline \kone}\\
		&\leq h \xi(\omega)\nu \|\kone - \overline \kone \|^2.
	\end{align*}
	Since $h \xi(\omega)\nu \leq h_{\max} \max(0,\nu) <1$, it follows that
	$\kone = \overline \kone$.
	Thus, the function $g_1$ defined in \eqref{eq:g_1} has a unique root
	$\kone(t,h,\xi(\omega),\randvec(\omega))$, which is well-defined.
	Therefore, an application of Lemma~\ref{lem:meas} shows that
	$\omega \mapsto \kone(t,h,\xi(\omega),\randvec(\omega))$ is
	$\widehat{\mathcal{F}}$-measurable.

	Next, we turn to the second equation in \eqref{eq:kdef}.
	The vector $\ktwo = \ktwo(t,h,\xi(\omega),\randvec(\omega))$ is a root of the
	function
	\begin{equation}
		\label{eq:g_2}
		\genvec\mapsto g_2(t,h,\xi(\omega),\randvec(\omega);\genvec)
		:=\genvec - f(t+h (1-\xi(\omega)),\randvec(\omega)
		+h (1-2\xi(\omega))\kone(t,h,\xi(\omega),
		\randvec(\omega))+\xi(\omega) h \genvec).
	\end{equation}
	To apply Lemma~\ref{lem:circ_exist}, we first use the one-sided
	Lipschitz condition from Assumption~\ref{ass:f_one_lip_B} to find
	\begin{align*}
		\inner{g_2(t,h,\xi(\omega),\randvec(\omega),\genvec)}{\genvec}
		&= \|\genvec\|^2 - \inner{f(\ttwo,\randvec(\omega)
			+h (1-2\xi(\omega))\kone +h \xi(\omega) \genvec)}{\genvec}\\
		&\quad + \inner{f(\ttwo,\randvec(\omega)+h (1-2\xi(\omega))\kone))}{\genvec}\\
		&\quad - \inner{f(\ttwo,\randvec(\omega)+h (1-2\xi(\omega))\kone))}{\genvec}\\
		&\geq (1- h \xi(\omega)\nu) \|\genvec\|^2
		- \|f(\ttwo,\randvec(\omega)+h (1-2\xi(\omega))\kone)\|\|\genvec\|.
	\end{align*}
	Due to Assumption~\ref{ass:f_bound}, the bounds $\|\randvec(\omega)\| \le K$
	and $\|\kone\|\le (1 - h_{\max} \max(0, \nu))^{-1} M_K$,
	we obtain
	$\|f(\ttwo,\randvec(\omega)+h (1-2\xi(\omega))\kone)\| \le M_{K_2}$
	with $K_2 = K + h_{\max} (1 - h_{\max} \max(0, \nu))^{-1} M_K$.

	Since $h \xi(\omega)\nu \leq h_{\max} \max(0, \nu) < 1$, it follows that
	\begin{equation*}
		\inner{g_2(t,h,\xi(\omega),\randvec(\omega),\genvec)}{\genvec}\geq 0,
	\end{equation*}
	for every $\genvec\in \R^d$ with $\|\genvec\| \geq
	(1- h_{\max} \max(0,\nu))^{-1} M_{K_2}$.
	Hence, we can again apply Lemma~\ref{lem:circ_exist} with the radius
	\begin{align*}
		R_2 := (1- h_{\max} \max(0,\nu))^{-1} M_{K_2}.
	\end{align*}
	and obtain the existence of a root $\ktwo(t,h,\xi(\omega),\randvec(\omega))$
	of the function $g_2$ defined in \eqref{eq:g_2}.
	In particular, we have
	\begin{align*}
		\|\ktwo(t,h,\xi(\omega),\randvec(\omega))\|
		&\leq R_2 = (1- h_{\max} \max(0,\nu))^{-1} M_{K_2}.
	\end{align*}
	The proof of uniqueness and $\widehat{\mathcal{F}}$-measurability of $\ktwo$
	follows along the same lines as the proof for $\kone$ and is omitted.

	Finally, from \eqref{eq:Phidef} it follows that the one-step operator
	$\Phi(t,h,\xi(\omega),\randvec(\omega))$ is a linear combination of
	$\randvec(\omega)$, $\kone$ and $\ktwo$, and hence well-defined and
	$\widehat{\mathcal{F}}$-measurable as well.
	This completes the proof of the lemma.
\end{proof}

\begin{theorem}
	Let Assumption~\ref{ass:f_low} and \ref{ass:rand_var} be fulfilled.
	For every $h = \frac{T}{N}$, $N \in \N$, with $h \in (0,h_{\max}]$ and $\Phi$ defined in \eqref{eq:Phidef},
	there exists a uniquely determined, $(\mathcal{F}_n)_{n=0}^N$-adapted sequence
	of $\R^d$-valued random variables $(u^n)_{n=0}^N$ such that
	\begin{align*}
		\begin{cases}
			u^{n+1} = \Phi(t_n, h, \xi_{n+1}, u^n), \quad n \in \{0,\dots, N-1\},\\
			u^0 = u_0.
		\end{cases}
	\end{align*}
\end{theorem}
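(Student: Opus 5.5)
We argue by induction on $n \in \{0,\dots,N\}$, with Lemma~\ref{lem:exist_measble} as the only substantial tool. The induction hypothesis is that $u^0,\dots,u^n$ are uniquely determined, $\R^d$-valued, and that $u^m$ is $\mathcal{F}_m$-measurable for every $m \le n$. Here we set $\mathcal{F}_0$ to be, for instance, the trivial $\sigma$-algebra completed by the null sets. For $n=0$ the statement is immediate: $u^0 = u_0$ is deterministic and therefore $\mathcal{F}_0$-measurable.

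For the induction step from $n$ to $n+1$, with $n \le N-1$, we apply Lemma~\ref{lem:exist_measble} with the following choices:
\begin{itemize}
\item the probability space $(\Omega, \widehat{\F}, \P) := (\Omega, \mathcal{F}_{n+1}, \P)$, which is complete as a sub-$\sigma$-algebra of a complete filtered space (assuming the usual convention that the filtration contains the null sets);
\item the time $t := t_n$ and the step $h = T/N$;
\item the random variables $\xi := \xi_{n+1}$ and $\randvec := u^n$.
\end{itemize}

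The hypotheses of the lemma hold for these choices. First, $h \in [0,\min(T-t_n,h_{\max})]$, because $T - t_n = (N-n)h \ge h$ and $h \le h_{\max}$ by assumption. Second, $\xi_{n+1}$ takes values in $[\half,1]$ and is $\mathcal{F}_{n+1}$-measurable by Assumption~\ref{ass:rand_var}. Third, $u^n$ is $\mathcal{F}_n$-measurable by the induction hypothesis, and $\mathcal{F}_n \subseteq \mathcal{F}_{n+1}$ since $(\mathcal{F}_n)$ is a filtration, so $u^n$ is also $\mathcal{F}_{n+1}$-measurable.

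Lemma~\ref{lem:exist_measble} then yields that $\omega \mapsto \Phi(t_n,h,\xi_{n+1}(\omega),u^n(\omega))$ is well-defined and $\mathcal{F}_{n+1}$-measurable. We define $u^{n+1}$ to be this random variable.

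Uniqueness holds pathwise. For each $\omega$, the stage vectors $\kone$ and $\ktwo$ in \eqref{eq:kdef} are the unique roots of $g_1$ and $g_2$; this uniqueness was established in the proof of Lemma~\ref{lem:exist_measble} via the one-sided Lipschitz condition together with $h\xi\nu < 1$. Consequently $\Phi$ is a genuine single-valued function. Since $u^n$ is unique by the induction hypothesis, $u^{n+1}$ is unique as well.

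The only mildly delicate point is that Lemma~\ref{lem:exist_measble} requires no a priori bound on $\randvec$. The lemma is stated for arbitrary $\R^d$-valued $\randvec$, and the bound $\|\randvec(\omega)\| \le K$ enters only through the pointwise radii used for existence, with $K$ chosen depending on $\omega$. Hence no integrability or boundedness of $u^n$ is needed, and the induction closes, giving an adapted sequence $(u^n)_{n=0}^N$.
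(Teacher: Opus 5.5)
Your proposal is correct and follows the paper's proof: induction on $n$, with the constant $u_0$ as base case and Lemma~\ref{lem:exist_measble} applied to $\xi_{n+1}$ and $\randvec = u^n$ for the step, where $u^n$ is $\mathcal{F}_{n+1}$-measurable because $\mathcal{F}_n \subseteq \mathcal{F}_{n+1}$. Your additional checks are exactly what the paper's one-line induction leaves implicit: $h \le T - t_n$, pathwise uniqueness from the unique stage roots, and the $\omega$-wise choice of $K$, so that no a priori bound on $u^n$ is needed.
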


\begin{proof}
	The assertion follows by induction on $n \in \{0,\ldots,N\}$.
	More precisely, it holds trivially for $n=0$ because the constant map
	$\omega \mapsto u_0$ is $\mathcal{F}_0$-measurable.
	Assuming the existence of a uniquely determined and $\mathcal{F}_{n}$-measurable
	random variable $u^{n}$ for some $n < N$, Lemma~\ref{lem:exist_measble}
	immediately yields a unique $\F_{n+1}$-measurable random variable $u^{n+1}$.
\end{proof}

\section{Stability of Runge--Kutta methods}
\label{sec:stability}

In this section, we derive the stability properties of the randomized RK method
\eqref{scheme:SDIRK}.
To this end, we first recall well-known stability concepts for (deterministic)
RK methods from \cite{Hairer1996}.

The starting point of the stability analysis is the \emph{stability function}
$R \colon \mathbb{C} \to \mathbb{C}$ of an RK method.
For a general RK method with $s$ stages and Butcher tableau
\eqref{eq:ButcherTab}, its stability function $R$ is defined by
\begin{align*}
  R(z) = 1 + z b^{\top} (I_s - z A)^{-1} \one,
\end{align*}
where $\one := (1,\ldots,1)^{\top} \in \R^s$ and $I_s$ is the identity matrix in
$\R^{s,s}$.
If one applies an RK method to \emph{Dahlquist's test equation}
\cite{Dahlquist1963}
\begin{align}
  \label{eq:Dahlquist}
  \begin{cases}
    u' = \lambda u, \quad t \in (0,\infty),\\
    u(0) = 1,
  \end{cases}
\end{align}
with $\lambda \in \C$, then the numerical approximation with step size $h$ is
given by the recursion
\begin{align}
  \label{eq:sequence_u_n}
  u^{n+1} = R(h \lambda) u^n, \quad n \in \N.
\end{align}
This leads to the first notion of stability introduced by Dahlquist.

\begin{definition}[A-stability]
  \label{def:A-stable}
  An RK method with stability function $R$ is called \emph{A-stable} if
  $|R(z)| \leq 1 $ for all $z \in \C^{-} = \{ z\in\C \mid \Re(z)\leq 0\}$.
\end{definition}

Therefore, an A-stable RK method has the property that if the exact solution
$u$ to \eqref{eq:Dahlquist} satisfies $u(t)$ is bounded as $t \to \infty$, then for the numerical solution, it also follows that $u^n$ is bounded as $n \to \infty$.
For further details on the notion of A-stability, we refer to
\cite[Section~IV.3]{Hairer1996}.

The second notion of stability, originating from \cite{1975-Butcher},
generalizes the concept of A-stability to nonlinear ODEs of the form
\eqref{eq:problem}.
We follow the presentation from \cite[Definition~IV.12.2]{Hairer1996}.

\begin{definition}[B-stability]
  \label{def:B-stable}
  An RK method \eqref{eq:RKdef} is called \emph{B-stable} if the fulfillment of
  Assumption~\ref{ass:f_one_lip_B2} with $\nu=0$ (i.e., $f$ is contractive)
  implies for every step size $h>0$ that
  \begin{equation*}
    \|u_1-\widehat u_1\| \leq \|u_0-\widehat u_0\|,
  \end{equation*}
  where $u_1$ and $\widehat u_1$ are the numerical approximations produced
  by the RK method \eqref{eq:RKdef} after one step, starting with initial
  conditions $u_0$ and $\widehat u_0$, respectively.
\end{definition}

The B-stability of an RK method is often verified by demonstrating
its algebraic stability, cf.\ \cite[Definition~IV.12.5]{Hairer1996}.

\begin{definition}[Algebraic stability]
  \label{def:Alg-stable}
  An RK method \eqref{eq:RKdef} is called \emph{algebraically stable} if
  $b_{i} \ge 0$ for all $i\in\{1,\dots,s\}$ and the matrix
  $\mathbf{M} \in \R^{s,s}$ given by
  \begin{equation*}
    (\mathbf{M})_{ij}= b_i a_{ij} + b_{j} a_{ji} - b_i b_j,
    \quad i,j \in \{1,\dots,s\},
  \end{equation*}
  is positive semi-definite.
\end{definition}

\begin{lemma}
  \label{lemma:Algebraic_implies_B}
  Let Assumption~\ref{ass:f_low} hold. Then, the following implications hold
  for every RK method:
  \begin{equation*}
    \text{Algebraically stable}\implies\text{B-stable}\implies\text{A-stable}.
  \end{equation*}
\end{lemma}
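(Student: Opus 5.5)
We prove the two implications separately, following the classical argument of Burrage--Butcher and Crouzeix as presented in \cite[Section~IV.12]{Hairer1996}.

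\emph{Algebraic stability implies B-stability.} Let $f$ satisfy Assumption~\ref{ass:f_one_lip_B2} with $\nu=0$. Take two one-step approximations $u_1,\widehat u_1$ started from $u_0,\widehat u_0$, with stage values
\begin{align*}
  g_i = u_0 + h\sum_j a_{ij} k^i, \qquad \widehat g_i = \widehat u_0 + h\sum_j a_{ij}\widehat k^i, \qquad k^i=f(t_0+c_ih,g_i).
\end{align*}
We assume these stages exist; for $\nu=0$ this is covered by Lemma~\ref{lem:exist_measble} for our scheme, and is taken as part of the definition in general. Set
\begin{align*}
  \Delta u_0=u_0-\widehat u_0,\quad \Delta u_1=u_1-\widehat u_1,\quad \Delta g_i=g_i-\widehat g_i,\quad \Delta f_i=h(k^i-\widehat k^i).
\end{align*}
Then $\Delta u_1=\Delta u_0+\sum_i b_i\Delta f_i$. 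Expanding the square gives
\begin{align*}
  \|\Delta u_1\|^2=\|\Delta u_0\|^2+2\sum_i b_i\inner{\Delta f_i}{\Delta u_0}+\sum_{i,j}b_ib_j\inner{\Delta f_i}{\Delta f_j}.
\end{align*}
Next we substitute $\Delta u_0=\Delta g_i-\sum_j a_{ij}\Delta f_j$ and symmetrize the double sum. This yields the key identity
\begin{align*}
  \|\Delta u_1\|^2=\|\Delta u_0\|^2+2h\sum_i b_i\inner{k^i-\widehat k^i}{\Delta g_i}-\sum_{i,j}m_{ij}\inner{\Delta f_i}{\Delta f_j},
\end{align*}
where $m_{ij}$ are the entries of $\mathbf{M}$. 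Since $b_i\ge0$, contractivity makes the middle sum nonpositive. Positive semidefiniteness of $\mathbf{M}$ makes the last sum nonnegative: diagonalize $\mathbf{M}=Q^\top DQ$, so that it becomes $\sum_k d_k\|\sum_i q_{ki}\Delta f_i\|^2\ge0$, applied componentwise in $\R^d$. Hence $\|\Delta u_1\|\le\|\Delta u_0\|$. The main care point is this algebraic identity with vector-valued $\Delta f_i$, but it is routine bookkeeping.

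\emph{B-stability implies A-stability.} Fix $z=h\lambda$ with $\Re z\le 0$, i.e. $\lambda\in\C^-$. Realize Dahlquist's equation \eqref{eq:Dahlquist} as a real system on $\R^2$. Identify $\C\cong\R^2$ and set $f(t,v)=\Lambda v$, where $\Lambda=\begin{pmatrix}\Re\lambda & -\mathrm{Im}\lambda\\ \mathrm{Im}\lambda & \Re\lambda\end{pmatrix}$. Then
\begin{align*}
  \inner{\Lambda(v-w)}{v-w}=\Re\lambda\,\|v-w\|^2\le0,
\end{align*}
so $f$ is contractive, and it is linear with constant coefficients, so Assumption~\ref{ass:f_low} holds trivially. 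For this linear problem the stage equations form the linear system $(I-zA)\kappa=\one u_0$ in complex form.

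Solvability for every $z\in\C^-$ requires $I-zA$ to be invertible. Contractivity together with B-stability gives uniqueness of the stage solutions, which forces invertibility; alternatively, one restricts to the $z$ where $R$ is defined, as in the definition of $R$. Then $u_1=R(z)u_0$ and $\widehat u_1=R(z)\widehat u_0$. B-stability gives $|R(z)|\,|u_0-\widehat u_0|\le|u_0-\widehat u_0|$, and choosing $u_0\neq\widehat u_0$ yields $|R(z)|\le1$. Finally, B-stability is required for every $h>0$, and every $z\in\C^-$ arises as $h\lambda$ with $h=1$; this covers all of $\C^-$.
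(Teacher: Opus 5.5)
Your proof is correct and is essentially the classical argument that the paper only cites. The first implication is the energy identity behind \cite[Theorem~IV.12.4]{Hairer1996}; note the typo in your stage definitions, where the sums should read $\sum_j a_{ij}k^j$. The second is the standard observation behind \cite[Theorem~IV.12.11]{Hairer1996} that Dahlquist's equation with $\Re\lambda\le 0$, written as a real system on $\R^2$, is contractive.

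One caveat: drop the claim that B-stability forces $I-zA$ to be invertible. B-stability controls only $u_1$, not the stage vector; for example, $A=\mathrm{diag}(1,-1)$, $b=(1,0)^\top$ is algebraically stable, yet $I-zA$ is singular at $z=-1$. Rely instead on your alternative: prove $|R(z)|\le 1$ wherever $I-zA$ is invertible, and extend the bound to the finitely many zeros of $\det(I-zA)$ by continuity of the rational function $R$.
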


The first implication in the lemma is proved in \cite[Theorem~IV.12.4]{Hairer1996}.
The second follows from \cite[Theorem~IV.12.11]{Hairer1996}.
In the next step, we verify that the deterministic RK method
\eqref{scheme:det_fam_SDIRK} is algebraically stable.
While our randomized scheme will ultimately restrict the parameter to $x \in
[\frac{1}{2}, 1]$, the following lemma establishes algebraic stability for a
considerably broader range of the parameter $x \in [\frac{1}{4}, \infty)$.

\begin{lemma}
  \label{lemma:choice_a_SDIRK}
  For every fixed value $x \in [\frac{1}{4},\infty)$, the RK method
  \eqref{scheme:det_fam_SDIRK} is algebraically stable.
\end{lemma}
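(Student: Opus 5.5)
Following Definition~\ref{def:Alg-stable}, we check two things: that the weights are nonnegative, and that the matrix $\mathbf{M}$ is positive semi-definite. Both can be done by direct computation from the Butcher tableau \eqref{scheme:det_fam_SDIRK}. The weights $b_1=b_2=\half$ are positive for every $x$, so the first condition holds trivially. The real content is the semi-definiteness of the $2\times 2$ symmetric matrix $\mathbf{M}$, which we compute entrywise from $a_{11}=a_{22}=x$, $a_{12}=0$ and $a_{21}=1-2x$.

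The entries are:
\begin{align*}
  \mathbf{M}_{11} &= 2 b_1 a_{11} - b_1^2 = x - \tfrac14,\\
  \mathbf{M}_{22} &= 2 b_2 a_{22} - b_2^2 = x - \tfrac14,\\
  \mathbf{M}_{12} = \mathbf{M}_{21} &= b_1 a_{12} + b_2 a_{21} - b_1 b_2 = \tfrac12(1-2x) - \tfrac14 = \tfrac14 - x.
\end{align*}
Hence
\begin{align*}
  \mathbf{M} = \Big(x-\tfrac14\Big)\begin{pmatrix} 1 & -1\\ -1 & 1\end{pmatrix}.
\end{align*}
The matrix $\begin{pmatrix} 1 & -1\\ -1 & 1\end{pmatrix}$ has eigenvalues $0$ and $2$. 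Equivalently, for $y\in\R^2$,
\begin{align*}
  y^\top \mathbf{M} y = \Big(x-\tfrac14\Big)(y_1-y_2)^2,
\end{align*}
which is nonnegative for every $y$ exactly when $x\ge \frac14$. This gives algebraic stability on the claimed range $x\in[\frac14,\infty)$. As a side remark, the same factorization shows the condition fails for $x<\frac14$, so the range in the statement is sharp.

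There is no real obstacle here. The only point requiring care is the off-diagonal entry: since $a_{12}=0$, the symmetric formula $b_1 a_{12} + b_2 a_{21} - b_1 b_2$ reduces to $b_2 a_{21} - b_1 b_2$, and the resulting sign must be checked so that the rank-one factorization comes out correctly.
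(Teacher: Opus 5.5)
Your proof is correct and follows the paper's approach: compute $\mathbf{M}$ entrywise from the tableau and verify positive semi-definiteness. The paper concludes via Sylvester's criterion (diagonal entry $x-\frac14\ge 0$, determinant $0$), while your identity $y^\top\mathbf{M}y=(x-\frac14)(y_1-y_2)^2$ reaches the same conclusion a bit more directly and also checks $b_i\ge 0$, which the paper leaves implicit.
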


\begin{proof}
  For the RK method \eqref{scheme:det_fam_SDIRK}, the matrix $\mathbf{M}$
  from Definition~\ref{def:Alg-stable} takes the form
  \begin{equation*}
    \mathbf{M} =\begin{pmatrix}
      x-\frac{1}{4} & \frac{1}{2}(1-2x)-\frac{1}{4}\\
      \frac{1}{2}(1-2x)-\frac{1}{4} & x - \frac{1}{4}
    \end{pmatrix}.
  \end{equation*}
  To verify that the matrix is positive semi-definite, we apply Sylvester's
  criterion. The determinant of the first principal minor $x-\frac{1}{4}$ is
  non-negative if $x\geq\frac{1}{4}$.
  Moreover, the determinant of $\mathbf{M}$ is
  \begin{align*}
    \mathrm{det}(\mathbf{M})
    = \Big(x-\frac{1}{4}\Big)\Big(x-\frac{1}{4}\Big)
    -\Big(\frac{1}{2}(1-2x)-\frac{1}{4}\Big)^2
    = \Big(x-\frac{1}{4}\Big)^2
    -\Big(\frac{1}{4} - x\Big)^2= 0,
  \end{align*}
  which is always non-negative.
  Thus, under the condition $x \ge \frac{1}{4}$, the matrix $\mathbf{M}$ is
  positive semi-definite.
\end{proof}

\begin{remark}[Direct proof of A-stability]
  Following \cite[conditions (IV~3.6) and (IV~3.7)]{Hairer1996}),
  A-stability of \eqref{scheme:det_fam_SDIRK} can also be shown directly.
  In fact, the associated stability function is given by
  \begin{equation*}
    R(z) = \frac{1+\frac{1}{2} (2-4 x) z+\frac{1}{2}
    \left(2 x^2-4 x+1\right)z^2}{(1 - x z)^2} =: \frac{P(z)}{Q(z)}.
  \end{equation*}
  Note that $R$ is analytic on $\C^{-}$ for every $x \ge 0$.
  Moreover, $|R(\mathrm{i} t )|\le 1$ for all $t\in\mathbb{R}$
  if and only if
  \begin{equation*}
    Q(\mathrm{i} t)Q(-\mathrm{i} t) - P(\mathrm{i} t) P(-\mathrm{i} t)
    = \frac{1}{4} t^4 (1-2x)^2 (4x-1) \ge 0.
  \end{equation*}
  By the maximum principle, A-stability is hence obtained for all
  $x \in [\frac14, \infty)$.
  Correspondingly, the stability domain includes the left half-plane,
  as illustrated in Figure~\ref{fig:stabplot}.
\end{remark}

\begin{figure}[htp]
  \label{fig:stabplot}
  \begin{center}
    \includegraphics[width=0.4\hsize]{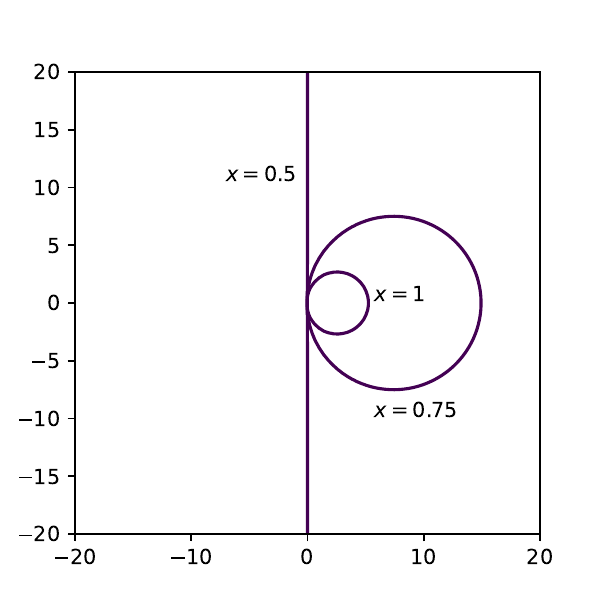}
  \end{center}
  \caption{Boundaries of the stability regions
    of scheme~\eqref{scheme:det_fam_SDIRK}.}
\end{figure}

The concept of algebraic stability is not only applicable when $f$ fulfills
Assumption~\ref{ass:f_one_lip_B2} with $\nu = 0$ but also if $\nu$ is
positive.
Indeed, if the Butcher matrix $A$ is invertible, it also gives a stability
result for $f$ when it satisfies a more general one-sided Lipschitz condition
with $\nu > 0$.
This notion of stability is called \emph{C-stability}, see
\cite[Satz~8.4.3]{Strehmel2012}.
For the deterministic RK method \eqref{scheme:det_fam_SDIRK} an
important stability result similar to C-stability holds.

\begin{lemma}\label{lem:alg_bound}
  Let Assumption~\ref{ass:f_low} be fulfilled and let $\Phi$ be defined as in
  \eqref{eq:Phidef}. Then there exists a constant $C_1>0$, independent of
  $f$ apart from $\nu$, such that
  \begin{equation*}
    \|\Phi(t,h,x,\genvec)-\Phi(t,h,x,\widehat \genvec)\|^2
    \leq (1+hC_1 \nuplus )\| \genvec -\widehat \genvec\|^2
  \end{equation*}
  is fulfilled for all $h \in [0, h_{\max}]$, all $t \in [0, T-h]$ and
  all $\genvec, \widehat \genvec \in \R^d$.
\end{lemma}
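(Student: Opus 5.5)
The plan is to follow the standard algebraic-stability computation (as in the proof of \cite[Theorem~IV.12.4]{Hairer1996}), keeping track of the one-sided Lipschitz constant instead of assuming $\nu=0$. Fix $t,h,x$ and write $k^i=k^i(t,h,x,\genvec)$, $\widehat k^i = k^i(t,h,x,\widehat\genvec)$. Introduce the differences $\Delta v = \genvec-\widehat\genvec$ and $\Delta k^i = k^i-\widehat k^i$, together with the stage arguments
\begin{align*}
  U^1 = \genvec + hx k^1, \qquad U^2 = \genvec + h(1-2x)k^1 + hx k^2,
\end{align*}
their hatted analogues $\widehat U^i$, and the differences $\Delta U^i = U^i-\widehat U^i$. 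By \eqref{eq:Phidef}, $\Delta\Phi = \Delta v + \frac h2(\Delta k^1+\Delta k^2)$. Expanding the square gives the identity
\begin{align*}
  \|\Delta\Phi\|^2 = \|\Delta v\|^2 + 2h\sum_i b_i\inner{\Delta k^i}{\Delta U^i} - h^2\sum_{i,j}\mathbf M_{ij}\inner{\Delta k^i}{\Delta k^j},
\end{align*}
which is obtained by substituting $\Delta v = \Delta U^i - h\sum_j a_{ij}\Delta k^j$ in the cross terms. Lemma~\ref{lemma:choice_a_SDIRK} shows that $\mathbf M$ is positive semidefinite for $x\in[\frac12,1]$. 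Hence the last term is nonpositive, and Assumption~\ref{ass:f_one_lip_B} (the two stages share the same time argument) yields
\begin{align*}
  \|\Delta\Phi\|^2\le\|\Delta v\|^2 + h\nu\big(\|\Delta U^1\|^2+\|\Delta U^2\|^2\big).
\end{align*}
If $\nu\le0$, this already proves the claim with any $C_1$.

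For $\nu>0$, it remains to bound $\|\Delta U^i\|$ by $C\|\Delta v\|$ with $C$ depending only on $h_{\max}\nu$, not on the Lipschitz constant of $f$.

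\textbf{First stage.} Take the inner product of $\Delta U^1 = \Delta v + hx\,\Delta k^1$ with $\Delta U^1$. Using $\inner{\Delta k^1}{\Delta U^1}\le\nu\|\Delta U^1\|^2$ gives $(1-hx\nu)\|\Delta U^1\|^2\le \inner{\Delta v}{\Delta U^1}$. Since $hx\nu\le h_{\max}\nu<1$, this yields $\|\Delta U^1\|\le (1-h_{\max}\nu)^{-1}\|\Delta v\|$.

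\textbf{Second stage.} This is the main obstacle, because $\Delta U^2$ contains $h(1-2x)\Delta k^1$, and $\Delta k^1$ itself is not controlled by $\nu$. To remove it, express $hx\Delta k^1 = \Delta U^1-\Delta v$, which gives
\begin{align*}
  \Delta U^2 = \Delta v + \tfrac{1-2x}{x}(\Delta U^1-\Delta v) + hx\Delta k^2 .
\end{align*}
Taking the inner product with $\Delta U^2$ as before gives
\begin{align*}
  (1-hx\nu)\|\Delta U^2\|\le \Big\|\Big(1-\tfrac{1-2x}{x}\Big)\Delta v + \tfrac{1-2x}{x}\Delta U^1\Big\|.
\end{align*}
For $x\in[\frac12,1]$ we have $|\frac{1-2x}{x}|\le 1$, so $\|\Delta U^2\|\le C\|\Delta v\|$ with $C$ depending only on $(1-h_{\max}\nu)^{-1}$. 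This uses the invertibility of $A$, namely $x\neq0$, matching the remark on C-stability.

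Combining both stage bounds gives $\|\Delta\Phi\|^2\le(1+hC_1\nu)\|\Delta v\|^2$ with an explicit $C_1$ that depends only on $h_{\max}\nu$. The one delicate point is that $C_1$ depends on $\alpha\nu<1$ through the choice of $h_{\max}$; I would state it as $C_1 = (1-h_{\max}\max(0,\nu))^{-2}(1+9)$ or similar.
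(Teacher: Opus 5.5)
Your argument is correct and is essentially the proof the paper invokes by citation (Hairer--Wanner, Prop.~IV.15.2 together with Thm.~IV.14.6: the algebraic-stability identity, then bounding the stage increments by $\|\Delta v\|$ using the invertibility of $A$); you carry it out explicitly for the two-stage SDIRK by forward substitution, which makes the uniformity of $C_1$ in $x\in[\frac12,1]$ transparent rather than asserted. The only slip is cosmetic: your stage bounds give $\|\Delta U^2\|^2\le 9(1-h_{\max}\max(0,\nu))^{-4}\|\Delta v\|^2$, so the resulting constant is $C_1=10\,(1-h_{\max}\max(0,\nu))^{-4}$, with exponent $-4$ rather than $-2$.
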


\begin{proof}
  In \cite[Proof of Proposition~IV.15.2]{Hairer1996}, the authors prove the
  result for any admissible Runge--Kutta method.
  Following the presented proof carefully and using
  \cite[Theorem~IV.14.6]{Hairer1996}, one can show that there exists a $C_1>0$,
  independent of the value of $x \in [\half,1]$, such that the stability bound
  holds.
\end{proof}

With the properties of the deterministic method established, we can now
summarize the stability of the randomized scheme \eqref{scheme:SDIRK}.
Since the random variable $\xi$ is distributed uniformly over an interval
where algebraic stability holds, the randomized method inherits these properties.

\begin{prop}
  \label{thm:randomized_stability}
  Let $\xi \sim U(\frac{1}{2}, 1)$. Then, the randomized RK method
  \eqref{scheme:SDIRK} is algebraically stable, and consequently B-stable and
  A-stable, with probability one.
  Furthermore, under Assumption~\ref{ass:f_low}, the stability bound
  \begin{equation*}
    \|\Phi(t,h,\xi,\genvec)-\Phi(t,h,\xi,\widehat \genvec)\|^2
    \leq (1+hC_1 \nuplus )\| \genvec -\widehat \genvec\|^2
  \end{equation*}
  holds for all $h \in [0, h_{\max}]$, all $t \in [0, T-h]$ and
  all $\genvec, \widehat \genvec \in \R^d$.
\end{prop}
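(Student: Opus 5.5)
The plan is to reduce everything to the deterministic results already established, applied pathwise for each fixed realization of $\xi$. Since $\xi \sim \mathcal{U}(\frac12,1)$, the event $\{\xi(\omega) \in [\frac12,1]\}$ has probability one; in fact the support of the distribution is exactly $[\frac12,1]$. For every $\omega$ in this event, the Butcher tableau \eqref{scheme:SDIRK} coincides with the deterministic tableau \eqref{scheme:det_fam_SDIRK} with parameter $x = \xi(\omega) \in [\frac12,1] \subset [\frac14,\infty)$. We then argue in three steps.

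\textbf{Algebraic stability.} For each such $\omega$, Lemma~\ref{lemma:choice_a_SDIRK} shows that the realized method is algebraically stable. The weights $b_1=b_2=\frac12$ are nonnegative, and the matrix $\mathbf{M}$ is positive semi-definite because $\xi(\omega)\ge\frac14$. Consequently, the set of $\omega$ for which the realized scheme is algebraically stable contains $\{\xi\in[\frac12,1]\}$, which has probability one.

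\textbf{B- and A-stability.} Under Assumption~\ref{ass:f_low}, Lemma~\ref{lemma:Algebraic_implies_B} applied to the deterministic method with $x=\xi(\omega)$ yields B-stability and then A-stability, again for every $\omega$ in this full-measure event. For A-stability one could alternatively cite the explicit computation of $Q(\mathrm{i}t)Q(-\mathrm{i}t)-P(\mathrm{i}t)P(-\mathrm{i}t)$ in the preceding remark.

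\textbf{Stability bound.} Here we use Lemma~\ref{lem:alg_bound}. The key point is that the constant $C_1$ there is uniform in $x\in[\frac12,1]$; this is how its proof was phrased, namely that $C_1$ is independent of the value of $x$. Hence, for fixed $\omega$ with $\xi(\omega)\in[\frac12,1]$, we substitute $x=\xi(\omega)$ into the deterministic inequality. This gives the claimed estimate with the same $C_1$ for all admissible $h$, $t$, $v$, and $\widehat v$. It holds almost surely, and in fact for every $\omega$ if $\xi$ takes values in $[\frac12,1]$ everywhere, as in Assumption~\ref{ass:rand_var}.

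The only real subtlety is this uniformity of $C_1$ in $x$. If $C_1$ depended on $x$, the pathwise bound would carry a random constant. So the main check is that the constant produced in the proof of Lemma~\ref{lem:alg_bound}, which relies on the invertibility of $A$ and the bounds from \cite[Theorem~IV.14.6]{Hairer1996}, can be bounded by its supremum over the compact interval $[\frac12,1]$. We would verify this by noting that $A^{-1}$ depends continuously on $x$ there, since $\det A = x^2 \ge \frac14$, so the supremum is finite. Measurability issues do not arise, because the statements are deterministic inequalities holding for each fixed $\omega$.
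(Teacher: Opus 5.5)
Your proposal is correct and matches the paper's argument. The paper gives no separate proof; it only notes that each realization $\xi(\omega)\in[\frac12,1]$ gives an instance of \eqref{scheme:det_fam_SDIRK}, so Lemma~\ref{lemma:choice_a_SDIRK}, Lemma~\ref{lemma:Algebraic_implies_B} and Lemma~\ref{lem:alg_bound} apply pathwise, the last with $C_1$ asserted to be uniform in $x\in[\frac12,1]$. Your remark that this uniformity follows from $A^{-1}$ depending continuously on $x$ (since $\det A = x^2\ge\frac14$) is a reasonable sketch of what the paper leaves as ``one can show''.
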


We close this section with a brief comment on related stability concepts
discussed in \cite{Bochacik2021}. There, the authors consider three stability
regions for randomized RK methods, given by
\begin{align*}
  &\mathcal{R}_{MS} = \Big\{z \in \mathbb{C} \colon \prod_{k=1}^n R_k(z) \to 0 \ \text{in $L^2(\Omega)$ as} \ n\to \infty\Big\}, \\
  &\mathcal{R}_{AS} = \Big\{z \in \mathbb{C} \colon \prod_{k=1}^n R_k(z) \to 0 \ \text{almost surely as} \ n\to \infty\Big\}, \\
  &\mathcal{R}_{SP} = \Big\{z \in \mathbb{C} \colon \prod_{k=1}^n R_k(z) \to 0 \ \text{in probability as} \ n\to \infty\Big\},
\end{align*}
where $R_k(z)$ denotes the associated stability function of the randomized
Runge--Kutta method at the $k$-th step.

These concepts originate from an adoption of the notion of A-stability to
numerical methods for stochastic differential equations,
cf.~\cite{Bochacik2021,higham2000,saito1996}.
A randomized Runge--Kutta method is then called,
for instance, \emph{mean-square stable} if $\C^- \setminus i \R \subseteq \mathcal{R}_{MS}$.
Compared to Definition~\ref{def:A-stable}, these stability concepts are weaker.
They only guarantee that the numerical solution decays asymptotically in a
stochastic sense, which theoretically allows the numerical approximation to grow
during individual time steps.

In contrast to this, our randomized RK method \eqref{scheme:SDIRK} satisfies
$|R(z; \xi)| < 1$ for every $z \in \mathbb{C}^- \setminus i \R$ and with probability one
for every realization of $\xi \sim U(\half,1)$.
Therefore, the corresponding random sequence \eqref{eq:sequence_u_n}
generated by \eqref{scheme:SDIRK} possesses the stronger property
$|u^{n+1}| < |u^n|$ with probability one.

The last step is to ensure that our numerical method is uniformly bounded.

\begin{lemma}\label{lem:apriori_bound}
  Let Assumptions~\ref{ass:f_low} and \ref{ass:rand_var} be fulfilled.
  The solution $(u^n)_{n=0}^N$ of
  \begin{align*}
    \begin{cases}
      u^{n+1} = \Phi(t_n, h, \xi_{n+1},u^n), \quad n \in \{0,\dots, N-1\},\\
      u^0 = u_0,
    \end{cases}
  \end{align*}
  with $\Phi$ defined in \eqref{eq:Phidef}, is uniformly bounded for every $h = \frac{T}{N}$, $N \in \N$, with $h \in (0,h_{\max}]$.
  More precisely, there exists a constant $M_{\mathrm{apriori}}>0$ such that for
  every $h \in (0,h_{\max}]$
  \begin{equation*}
    \|u^n(\omega)\|\leq M_{\mathrm{apriori}} \quad \text{\ for all }
    n\in\{0,1,\dots,N\}\text{ and }\omega\in\Omega
  \end{equation*}
  is fulfilled.
\end{lemma}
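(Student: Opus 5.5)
The plan is to compare the numerical solution with the trivial input $\widehat\genvec = 0$ and use the stability bound of Proposition~\ref{thm:randomized_stability} together with the a priori bounds on the stages from Lemma~\ref{lem:exist_measble}. For each realization $\omega$ and each $n$, I write
\begin{equation*}
  u^{n+1} = \Phi(t_n,h,\xi_{n+1},u^n) - \Phi(t_n,h,\xi_{n+1},0) + \Phi(t_n,h,\xi_{n+1},0).
\end{equation*}
The triangle inequality and Proposition~\ref{thm:randomized_stability} (applied pathwise, since it holds for every $x\in[\half,1]$ by Lemma~\ref{lem:alg_bound}) give
\begin{equation*}
  \|u^{n+1}\| \le (1+hC_1\nuplus)^{1/2}\|u^n\| + \|\Phi(t_n,h,\xi_{n+1},0)\|.
\end{equation*}

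For the inhomogeneity, I use \eqref{eq:Phidef}: $\Phi(t_n,h,\xi_{n+1},0) = \frac h2(\kone+\ktwo)$ evaluated at $\genvec=0$. Lemma~\ref{lem:exist_measble} with $K=0$ bounds both stages by $(1-h_{\max}\nuplus)^{-1}\max(M_{K_1},M_{K_2})$, where $K_1=0$ and $K_2 = h_{\max}(1-h_{\max}\nuplus)^{-1}M_0$. These quantities depend only on $f$, $\nu$, $h_{\max}$ and $T$, not on $h$, $n$ or $\omega$. Hence $\|\Phi(t_n,h,\xi_{n+1},0)\|\le hD$ with a constant $D$ independent of $h$, $n$ and $\omega$.

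Setting $q = (1+hC_1\nuplus)^{1/2}\le \mathrm{e}^{hC_1\nuplus/2}$, a discrete Gronwall argument (induction on $n$) yields
\begin{equation*}
  \|u^n\| \le q^n\|u_0\| + hD\sum_{j=0}^{n-1} q^j \le \mathrm{e}^{TC_1\nuplus/2}\big(\|u_0\| + TD\big),
\end{equation*}
using $nh\le T$. The right-hand side is the desired $M_{\mathrm{apriori}}$.

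The only delicate point is making sure the constant in the stage bound is uniform in $h\in(0,h_{\max}]$ and in $\xi$. This holds because Lemma~\ref{lem:exist_measble} already expresses the bounds through $h_{\max}$ rather than $h$, and the condition $h_{\max}\nuplus<1$ keeps $(1-h_{\max}\nuplus)^{-1}$ finite. Measurability is not needed here, since the bound is pathwise.
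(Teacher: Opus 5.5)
Your proposal is correct and follows essentially the same route as the paper: compare with $\Phi(t_n,h,\xi_{n+1},0)$ via the stability bound of Lemma~\ref{lem:alg_bound}, bound $\|\Phi(t_n,h,\xi_{n+1},0)\|\le hD$ using the stage bounds \eqref{eq:k12_bound} with $K=0$, and close with a discrete Gr\"onwall argument. The only cosmetic difference is that you keep the square root $(1+hC_1\nuplus)^{1/2}$ and unroll the linear recursion explicitly, whereas the paper weakens it to $1+hC_1\nuplus$, rewrites the recursion as a summed inequality, and cites the discrete Gr\"onwall inequality~\cite{Clark1987}.
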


\begin{proof}
  Applying Lemma~\ref{lem:alg_bound}, it follows for every
  $v\in\R^d$ and every $x\in[\half,1]$ that
  \begin{align*}
    \|\Phi(t,h,x,v)\|
    &\leq \|\Phi(t,h,x,v)-\Phi(t,h,x,0)\|
    +\|\Phi(t,h,x,0)\|\\
    &\leq (1+hC_1\nuplus)^{\half}\|v\|
    +\|\Phi(t,h,x,0)\|\\
    &\leq (1+hC_1\nuplus)\|v\|+\|\Phi(t,h,x,0)\|,
  \end{align*}
  where we used $(1+hC_1\nuplus)^{\half}\leq 1+hC_1\nuplus$.
  The bounds on $\kone$ and $\ktwo$ from \eqref{eq:k12_bound}, can be used to
  show
  \begin{align*}
    \|\Phi(t,h,x,0)\|
    \leq \frac{h}{2}\|\kone(t,h,x,0)\|
    +\frac{h}{2}\|\ktwo(t,h,x,0)\|
    \leq \frac{h}{2}(1-h_{\max} \max(0,\nu))^{-1} \big( M_{K_1} + M_{K_2}\big)
  \end{align*}
  for $M_{K_1}$ and $M_{K_2}$ from Assumption~\ref{ass:f_bound} with $K_1=0$ and
  $K_2=h_{\max}(1-h_{\max}\max(0,\nu))^{-1}M_{K_1}$. Thus, it follows that
  \begin{equation*}
  	\|\Phi(t,h,x,v)\|\leq (1+hC_1\nuplus) \| v\| + \frac{h}{2}(1-h_{\max}\nuplus)^{-1}
  	\big( M_{K_1} + M_{K_2}\big).
  \end{equation*}
  Applying this to the numerical solution yields
  \begin{align*}
    \|u^{n+1}(\omega)\|
    &= \| \Phi(t_n,h,\xi_{n+1}(\omega), u^n(\omega)) \|\\
    &\leq (1+hC_1\nuplus) \| u^n(\omega)\| + \frac{h}{2}(1-h_{\max}\nuplus)^{-1}
    \big( M_{K_1} + M_{K_2}\big)\\
    &\leq \| u_0\| + \frac{T}{2}(1-h_{\max} \max(0,\nu))^{-1} \big( M_{K_1} + M_{K_2}\big)
    + h C_1 \nuplus \sum_{k=0}^{n} \|u^k(\omega)\|
  \end{align*}
  for all $n\in\{0,1,\dots,N\}$ and $\omega\in\Omega$.
  The discrete Gr\"onwall inequality~\cite{Clark1987} then leads to the
  claimed result of this lemma.
\end{proof}

\section{Auxiliary error bounds}
\label{sec:taylor}

In this section, we establish the foundation for the error analysis
in the following section. We will give bounds that will be crucial
to bound the local error, the difference between our numerical
method and the exact flow after one single time step.
To this end, we derive Taylor approximations of both
the numerical method and the exact flow including suitable remainder terms.
For these Taylor approximations, the Hessian of $f$ with respect to~$u$ is
needed, which is a third-order tensor.
To avoid introducing notation for tensor products, we will use the shortened notation given by
\begin{equation}\label{eq:tensor_notation}
  (f_{uu}(t,u)[v][w])_i
  :=\sum_{j,k=1}^d \frac{\partial^2 f_i}{\partial x_j\partial x_k}(t,u)v_jw_k
  \quad \text{with }
  (f_{uu}(t,u)[v]^2)_i
  := (f_{uu}(t,u)[v][v])_i.
\end{equation}
We first define the exact flow $\Psi(v,t,h)$ at the time $t+h$ with initial
value $v$ at time $t$ as the solution of
\begin{equation} \label{eq:def_Psi}
  \begin{cases}
    \frac{\diff}{\diff h}\Psi(v,t,h)=f(t+h,\Psi(v,t,h)), &h\in(0,T-t),\\
    \Psi(v,t,0)=v.
  \end{cases}
\end{equation}
Note that $\Psi(u(t),t,h)=u(t+h)$ is fulfilled.
Furthermore, the existence of the solution $\Psi$ can be proven in the same way as
Lemma~\ref{lem:uexist}.
From \cite[Lemma~IV.12.1]{Hairer1996} and Lemma~\ref{lem:uexist}, it follows that
\begin{equation}\label{eq:boundPsi}
  \|\Psi(v,t,h)\|
  \leq \mathrm{e}^{\nu h} \|v\|
  + \int_{t}^{t+h} \mathrm{e}^{(t+h-\tau) \nu} \|f(\tau,0)\| \diff{\tau}
\end{equation}
for every $v \in \R^d$, $t \in [0,T]$ and $h \in (0,T-t]$.
For our analysis, we use the second-order Taylor approximation of $\Psi(v,t,h)$
with an integral remainder term.
Inserting the ODE \eqref{eq:def_Psi}, we find that
\begin{equation}
  \label{eq:Taylor_Psi}
  \begin{split}
  \Psi(\genvec,t,h)&=\Psi(\genvec,t,0)+h\Psi_h(\genvec,t,0)
  +\frac{h^2}{2}\Psi_{hh}(\genvec,t,0)
  +\frac{1}{2}\int_0^h (h-\tau)^2\Psi_{hhh}(\genvec, t, \tau)\diff{\tau}\\
  &=\genvec+ hf(t,\genvec)+ \frac{h^2}{2} (f_t(t,\genvec) +f_u(t,\genvec)f(t,\genvec))
  +\frac{1}{2}\int_0^h (h-\tau)^2F(t+\tau, \Psi(\genvec,t,\tau))\diff{\tau},
  \end{split}
\end{equation}
where we summarize all the second derivatives of $f$ through
\begin{equation}\label{eq:def_F}
  F(t,\genvec)
  = f_{tt}(t, \genvec)+2f_{tu}(t, \genvec)f(t, \genvec)
  + f_{uu}(t, \genvec)[f(t, \genvec)]^2
  + f_u(t, \genvec) f_t(t, \genvec)+ f_u^2(t, \genvec)f(t, \genvec).
\end{equation}
The continuity of $\Psi$ and its first two derivatives together with the
$L^2$-integrability of the third derivative follow from
Assumption~\ref{ass:f_high_conv}.
Therefore, the Taylor approximation is justified.
For a more convenient notation in the following proofs, we split up the
remainder term $F$ into two parts
\begin{equation}
  \label{eq:def_F1}
  F_1(t,\genvec) = f_{tt}(t, \genvec) + 2f_{tu}(t, \genvec)f(t, \genvec)
  +f_{uu}(t, \genvec)[f(t, \genvec)]^2,
\end{equation}
and
\begin{equation}\label{eq:def_F2}
  F_2(t,\genvec)
  =f_u(t, \genvec) f_t(t, \genvec)+ f_u^2(t, \genvec)f(t, \genvec).
\end{equation}
The remainder terms $F_1$ and $F_2$ correspond to one of the two third-order
conditions, compare with Remark~\ref{rem:order_conds}.
While $F_1$ collects all partial derivatives of order two, the function $F_2$
only contains partial derivatives up to order one.
The two remainder terms $F_1$ and $F_2$ need different treatments in the
forthcoming analysis.
Next, we state regularity results of the remainder terms $F_1$ and $F_2$.

\begin{lemma}\label{lem:lipschitzF}
  Let Assumption~\ref{ass:f_high_conv} be fulfilled.
  Then for every $K>0$, there exists a constant $C_K>0$ such that
  \begin{align*}
    \|F_1(t, v_1) - F_1(t, v_2)\|
    &\leq C_K L_{K}(t) \| v_1 - v_2 \|^{\sigma}
    + C_K \gamma_{K}(t) \| v_1 - v_2\|\\
    \|F_2(t_1, v_1) - F_2(t_2, v_2)\|
    &\leq C_{K} \big(|t_1 -t_2|^{\frac{1}{2}} + \tL_{K}(t_1)\| v_1 - v_2 \|
    \big)
  \end{align*}
  for all $t,t_1,t_2 \in [0,T]$ and $v_1,v_2 \in \R^d$ such that
  $\|v_1\|, \|v_2\| \leq K$, where $F_1$ and $F_2$ are given in
  \eqref{eq:def_F1} and \eqref{eq:def_F2}, while $\gamma_{K}$ and $L_{K}$
  are given in Assumption~\ref{ass:f2_bound} and Assumption~\ref{ass:f2_hold},
  respectively.
  Furthermore,
  \begin{equation*}
  	\| F(t,v)\| \leq C_K \gamma_K(t)
  \end{equation*}
  is fulfilled for all $t\in [0,T]$ and $v \in \R^d$ such that $\|v\| \leq K$.
\end{lemma}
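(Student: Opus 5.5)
The plan is to establish first uniform bounds on all lower-order quantities over the ball of radius $K$, and then to estimate each term of $F_1$, $F_2$ and $F$ with the triangle inequality, adding and subtracting mixed terms as usual. Fix $K>0$. By Lemma~\ref{lem:implied_continuity}, $f$, $f_t$ and $f_u$ have continuous representatives on the compact set $[0,T]\times B_K(0)$. Hence there is a constant $C_K$ with $\|f\|,\|f_t\|,\|f_u\|\le C_K$ on this set. Inequality \eqref{eq:f0_Lip} gives $\|f(t,v_1)-f(t,v_2)\|\le C_K\|v_1-v_2\|$ and $\|f(t_1,v)-f(t_2,v)\|\le C_K|t_1-t_2|$. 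The spatial and temporal bounds \eqref{eq:f1_spatial} and \eqref{eq:f1_temporal} control the increments of $f_t$ and $f_u$.

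For $F_1$, I treat each of its three terms by telescoping. For $f_{tt}(t,v_1)-f_{tt}(t,v_2)$, \eqref{eq:f_hold} gives $L_K(t)\|v_1-v_2\|^\sigma$. For the second term I write
\[
f_{tu}(t,v_1)f(t,v_1)-f_{tu}(t,v_2)f(t,v_2)=\big(f_{tu}(t,v_1)-f_{tu}(t,v_2)\big)f(t,v_1)+f_{tu}(t,v_2)\big(f(t,v_1)-f(t,v_2)\big).
\]
The first part is at most $C_K L_K(t)\|v_1-v_2\|^\sigma$, using the uniform bound on $f$. The second part is at most $\gamma_K(t)C_K\|v_1-v_2\|$, using \eqref{eq:f_bound} and the Lipschitz bound on $f$. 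The operator norm of $f_{tu}$ is bounded by a dimension-dependent multiple of $\gamma_K$, which is absorbed into $C_K$. The term $f_{uu}[f]^2$ is bilinear in $f$, so it telescopes into three differences: one Hölder difference of $f_{uu}$ times $\|f\|^2$, and two terms of the form $\|f_{uu}\|\,\|f\|\,\|f(t,v_1)-f(t,v_2)\|\le C_K\gamma_K(t)\|v_1-v_2\|$. Summing gives the first claim.

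For $F_2$ the argument is analogous, except that time also varies. I write $F_2(t_1,v_1)-F_2(t_2,v_2)=[F_2(t_1,v_1)-F_2(t_1,v_2)]+[F_2(t_1,v_2)-F_2(t_2,v_2)]$.
\begin{itemize}
\item In the spatial increment I telescope the products $f_uf_t$ and $f_uf_uf$. Each factor is uniformly bounded. The increments of $f_u$ and $f_t$ are bounded by $\tL_K(t_1)\|v_1-v_2\|$ by \eqref{eq:f1_spatial}, and the increment of $f$ is bounded by $C_K\|v_1-v_2\|$. This last bound must still be expressed in the form $C_K\tL_K(t_1)\|v_1-v_2\|$. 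I expect this to be the main obstacle, since $\tL_K$ might be small at some $t_1$. I would handle it by replacing $\tL_K$ with $\max(\tL_K,1)$ in Lemma~\ref{lem:implied_continuity}. This is harmless, because that lemma only asserts the existence of some $L^2$ function and \eqref{eq:f1_spatial} remains true after enlarging it.
\item In the temporal increment, \eqref{eq:f1_temporal} gives increments $C_K|t_1-t_2|^{1/2}$ for $f_t$ and $f_u$. The increment of $f$ is $C_K|t_1-t_2|\le C_KT^{1/2}|t_1-t_2|^{1/2}$.
\end{itemize}
Finally, the bound on $F$ follows directly from \eqref{eq:def_F}. Each of $f_{tt}$, $f_{tu}$ and $f_{uu}$ is bounded by a multiple of $\gamma_K(t)$ times bounded powers of $\|f\|$. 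The terms of $F_2$ are bounded by a constant $C_K$. To write this constant as $C_K\gamma_K(t)$ I again enlarge $\gamma_K$ to $\max(\gamma_K,1)$, which is still $L^2$-integrable and still satisfies \eqref{eq:f_bound}. I would state this normalization of $\gamma_K$ and $\tL_K$ explicitly at the start of the proof.
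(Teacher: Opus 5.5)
Your proposal is correct and follows essentially the same route as the paper. Like the paper, you telescope each product in $F_1$ and $F_2$ with the triangle inequality, using uniform bounds on $f, f_t, f_u$ over $[0,T]\times B_K(0)$ together with \eqref{eq:f0_Lip}, \eqref{eq:f1_spatial}, \eqref{eq:f1_temporal}, \eqref{eq:f_bound} and \eqref{eq:f_hold}. Your explicit normalization $\gamma_K, \tL_K \ge 1$, together with $|t_1-t_2| \le T^{1/2}|t_1-t_2|^{1/2}$, makes precise a step the paper leaves implicit: absorbing the plain Lipschitz increments of $f$ into $\tL_K(t_1)$, and the constant-size $F_2$-part of $F$ into $\gamma_K(t)$. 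The paper also gives no separate argument for the bound on $F$, and your last paragraph supplies it.
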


\begin{proof}
  Let $K >0$ be arbitrary. For $F_1$, we apply the bound~\eqref{eq:f0_Lip} from Lemma~\ref{lem:implied_continuity},
  Assumption~\ref{ass:f2_bound},
  and Assumption~\ref{ass:f2_hold}. In addition, by using the fact that
  $t$ is in the compact set $[0,T]$ and $\|v_1\|$, $\|v_2\|$ are bounded by $K$, we obtain
  \begin{align*}
    \|F_1(t, v_1) - F_1(t, v_2)\|
    %
    %
    &\leq \| f_{tt}(t, v_1) - f_{tt}(t, v_2)\|
    + 2 \| f_{tu}(t, v_1)f(t, v_1)- f_{tu}(t, v_2)f(t, v_2)\|\\
    &\quad + \| f_{uu}(t, v_1)[f(t, v_1)]^2 - f_{uu}(t, v_2)[f(t, v_2)]^2 \|\\
    &\leq \| f_{tt}(t, v_1) - f_{tt}(t, v_2)\|
    +2 \| f_{tu}(t, v_1)\| \|f(t, v_1) - f(t, v_2)\|\\
    &\quad +2 \| f_{tu}(t, v_1)- f_{tu}(t, v_2)\| \|f(t, v_2)\|
    + \| f_{uu}(t, v_1) - f_{uu}(t, v_2) \| \|f(t, v_1)\|^2 \\
    &\quad +  \| f_{uu}(t, v_2)\| \| f(t, v_1) + f(t,v_2)\|
    \| f(t, v_1) - f(t, v_2) \|\\
    %
    %
    &\leq C_{K} \big(L_{K}(t) \| v_1 - v_2 \|^{\sigma}
    + \gamma_{K}(t) \| v_1 - v_2\|\big).
  \end{align*}
  Similarly, we now apply the bound~\eqref{eq:f1_spatial},
  \eqref{eq:f1_temporal} and \eqref{eq:f0_Lip} from Lemma~\ref{lem:implied_continuity} as well as by using the fact that
  $t_1$, $t_2$ are in the compact set $[0,T]$ and $\|v_1\|$, $\|v_2\|$ are bounded by $K$, we obtain for $F_2$ that
  \begin{align*}
    &\|F_2(t_1, v_1) - F_2(t_2, v_2)\|\\
    %
    %
    &\leq \| f_u(t_1, v_1) f_t(t_1, v_1) - f_u(t_2, v_2) f_t(t_2, v_2)\|
    + \| f_u^2(t_1, v_1)f(t_1, v_1) - f_u^2(t_2, v_2)f(t_2, v_2)\| \\
    &\leq \|f_u(t_1, v_1)\| \| f_t(t_1, v_1) - f_t(t_2, v_2)\|
    + \| f_u(t_1, v_1) - f_u(t_2, v_2) \| \|f_t(t_2, v_2) \| \\
    &\quad + \|f_u(t_1, v_1)\|^2 \| f(t_1, v_1) - f(t_2, v_2)\|
    + \|f_u(t_1, v_1)\| \| f_u(t_1, v_1) - f_u(t_2, v_2)\| \|f(t_2, v_2)\| \\
    &\quad + \| f_u(t_1, v_1) - f_u(t_2, v_2)\|
    \|f_u(t_2, v_2)\|  \| f(t_2, v_2) \|\\
    &\leq C_{K} \big(|t_1 -t_2|^{\frac{1}{2}} + \tL_K(t_1) \| v_1 - v_2 \| \big).
  \end{align*}
\end{proof}

\begin{lemma}
  \label{lem:fuest}
  Let Assumption~\ref{ass:f_high_conv} be fulfilled.
  Then for all $h>0$ and $\genvec,w\in\R^d$ it holds
  \begin{equation*}
    (1 -h \nu) \|w\|\leq \|(I- h f_u(t,\genvec))w\|.
  \end{equation*}
  In particular, this implies that
  \begin{equation*}
  	\|(I- h f_u(t,\genvec))^{-1}\|\leq (1 - h \nu)^{-1}
  \end{equation*}
  is fulfilled for every $h\in(0,h_{\max}]$, $t\in[0,T]$ and $\genvec\in \R^d$.
\end{lemma}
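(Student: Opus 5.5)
The key input is that the one-sided Lipschitz condition of Assumption~\ref{ass:f_one_lip_B2} transfers to the Jacobian. Specifically, I will show first that
\begin{equation*}
  \inner{f_u(t,\genvec)w}{w} \leq \nu \|w\|^2
\end{equation*}
for all $t\in[0,T]$ and $\genvec,w\in\R^d$.

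By Lemma~\ref{lem:implied_continuity}, $f_u$ admits a continuous representative, so $f(t,\cdot)$ is continuously differentiable. For $\varepsilon>0$, apply Assumption~\ref{ass:f_one_lip_B2} with the pair $\genvec+\varepsilon w$ and $\genvec$, and divide by $\varepsilon^2$:
\begin{equation*}
  \Big\langle \frac{f(t,\genvec+\varepsilon w)-f(t,\genvec)}{\varepsilon}, w\Big\rangle \leq \nu\|w\|^2 .
\end{equation*}
Letting $\varepsilon\to 0$ gives the claimed bound on $f_u$, by differentiability of $f(t,\cdot)$.

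With this in hand, the first inequality follows from Cauchy--Schwarz:
\begin{equation*}
  \|(I-hf_u(t,\genvec))w\|\,\|w\| \geq \inner{(I-hf_u(t,\genvec))w}{w} = \|w\|^2 - h\inner{f_u(t,\genvec)w}{w} \geq (1-h\nu)\|w\|^2 .
\end{equation*}
For $w\neq 0$ we divide by $\|w\|$; for $w=0$ the inequality is trivial. No sign condition on $1-h\nu$ is needed at this stage.

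For the second claim, take $h\in(0,h_{\max}]$. By \eqref{eq:def_hmax}, $h\nu\le h_{\max}\max(0,\nu)<1$, so $1-h\nu>0$. The first inequality then shows that $I-hf_u(t,\genvec)$ is injective, hence invertible as a square matrix. Substituting $w=(I-hf_u(t,\genvec))^{-1}z$ gives
\begin{equation*}
  \|(I-hf_u(t,\genvec))^{-1}z\|\le (1-h\nu)^{-1}\|z\|
\end{equation*}
for all $z$, which is the operator-norm bound.

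The only delicate step is passing from the one-sided Lipschitz condition to the Jacobian bound. This needs a pointwise derivative at every $t$, not just a weak one, and that is exactly what the continuity statement of Lemma~\ref{lem:implied_continuity} supplies. Everything else is routine.
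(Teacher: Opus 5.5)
Your proof is correct and follows the same route as the paper: apply the one-sided Lipschitz condition to $\genvec+\varepsilon w$ and $\genvec$, divide by $\varepsilon^2$, let $\varepsilon\to 0$ to obtain $\inner{f_u(t,\genvec)w}{w}\le\nu\|w\|^2$, and finish with Cauchy--Schwarz. The paper differs only in how it passes to the limit: it bounds the remainder $\int_0^\varepsilon \big(f_u(t,\genvec+\tau w)-f_u(t,\genvec)\big)w\diff{\tau}$ via $L_K(t)$, which is finite only for almost every $t$ and so requires a final continuity extension in $t$, whereas you use the continuity of $f_u$ from Lemma~\ref{lem:implied_continuity} to get the bound for every $t$ at once, and you also spell out the invertibility and operator-norm step that the paper leaves implicit.
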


\begin{proof}
  Let $\varepsilon > 0$ be given.
  Applying the one-sided Lipschitz condition from
  Assumption~\ref{ass:f_one_lip_B2}, the second fundamental theorem of calculus
  and \eqref{eq:f1_spatial} from Lemma~\ref{lem:implied_continuity}, it follows that
  \begin{align*}
    \nu\varepsilon^2 \|{w}\|^2
    &\geq \inner{f(t,\genvec+\varepsilon w)-f(t,\genvec)}{\varepsilon w}\\
    &= \innerB{\varepsilon f_u(t,\genvec)w + \int_{0}^{\varepsilon}
    \big(f_{u}(t,\genvec+\tau w)w-f_u(t,\genvec)w \big)\diff{\tau}}{\varepsilon w}\\
    &\geq \varepsilon^2 \inner{f_u(t,\genvec)w}{w}
    - \varepsilon \int_0^{\varepsilon}\|f_{u}(t,\genvec+\tau w)-f_u(t,\genvec)\|\diff{\tau}\|w\|^2 \\
    &\geq \varepsilon^2 \inner{f_u(t,\genvec)w}{w}
    - \varepsilon L_{K}(t) \int_0^{\varepsilon} \| \tau w\|^{\frac{1}{2}} \diff{\tau}
    \|w\|^2 \\
    &= \varepsilon^2\inner{f_u(t,\genvec)w}{w}
    - \frac{2}{3} L_K(t) \varepsilon^{\frac 52} \|w\|^{\frac{5}{2}}
  \end{align*}
  for $K = \max_{\tau \in [0,\varepsilon]} \|\genvec+\tau w\|$.
  The function $L_K$ is finite almost everywhere in $[0,T]$ as it is $L^2$-integrable. Dividing by $\varepsilon^2$ and letting $\varepsilon \to 0$, the last term
  vanishes and we obtain
  $\nu \|w\|^2\geq \inner{f_u(t,\genvec)w}{w}$ for almost every $t\in[0,T]$.
  This then shows that
  \begin{equation*}
    (1-h\nu)\|w\|^2 \leq \inner{(I- h f_u(t,\genvec))w}{w}
    \leq \|(I- h f_u(t,\genvec))w\|\|w\|
  \end{equation*}
  for almost every $t\in[0,T]$.
  For $\|w\| \neq 0$, we divide by $\|w\|$ and obtain our claimed result. Additionally, the result holds trivially for $\|w\| = 0$. Since $f_u$ is continuous, we get that the result extends to all $t\in[0,T]$.
\end{proof}

The next step is to find an analogous expansion as in \eqref{eq:Taylor_Psi}
for the function $\Phi$ describing our one-step method from \eqref{eq:Phidef}.
To achieve this, we state the Taylor approximations of $\kone(t,h,x,v)$ and
$\ktwo(t,h,x,v)$ from \eqref{eq:kdef} with respect to the variable $h$.
As a first step, we will prove the existence of their derivatives and give
definitions through implicit equations for them in the following lemma.
For easier readability of the next lemma, the argument $(t,h,x,v) \in [0,T]
\times [0,h_{\max}] \times [\frac{1}{2}, 1] \times \R^d$ is dropped for the
functions $\ellone$, $\kone$, $\elltwo$ and $\ktwo$ and their partial
derivatives.
The elaborated bounds of $\koness$ and $\ktwoss$ are required for
Lemma~\ref{lem:E_norm_loc_cond}.

\begin{lemma}\label{lem:bounds}
  Let Assumption~\ref{ass:f_high_conv} be fulfilled.
  Then the partial derivatives of $k^1$ and $k^2$ with respect to $h$ are
  uniquely defined through the implicit equations
  \begin{align*}
    \kones
    &= x f_t(\ellonearg) + f_u(\ellonearg) \ellones \\
    \koness
    &= x^2 f_{tt}(\ellonearg) +2xf_{tu}(\ellonearg) \ellones
    + f_{uu}(\ellonearg)[\ellones]^2 + xf_u(\ellonearg) (2\kones+h\koness ) \\
    \ktwos
    &= (1-x) f_t(\elltwoarg) + f_u(\elltwoarg) \elltwos\\
    \ktwoss
    &= (1- x)^2 f_{tt}(\elltwoarg) + 2(1-x)f_{tu}(\elltwoarg)\elltwos
    + f_{uu}(\elltwoarg)[\elltwos]^2
    + (1-2x) f_u(\elltwoarg) ( 2 \kones + h \koness ) \\
    &\quad + x f_u(\elltwoarg) (2 \ktwos + h \ktwoss),
  \end{align*}
  with the abbreviations
  \begin{align*}
    &\theta^1:=t+hx ,&\qquad& \theta^2:= t+h(1-x),\\
    &\ellone:=\initvec+hx\kone,&\qquad
    &\elltwo := \initvec+h(1-2x)\kone +h x \ktwo,\\
    &\ellones:= x (\kone+h\kones),&\qquad&
    \elltwos:= (1-2x) (\kone + h \kones)+ x (\ktwo + h \ktwos).
  \end{align*}
  Furthermore, for every $K>0$, there exist a constant $\tK>0$ such that
  \begin{align*}
    \|\kone\| &\leq \tK, & \|\ktwo\| &\leq \tK, &
    \|\kones\| &\leq \tK, & \|\ktwos\| &\leq \tK, \\
    \|\ellone\| &\leq \tK, & \|\elltwo\| &\leq \tK,&
    \|\ellones\| &\leq \tK, & \|\elltwos\| &\leq \tK.
  \end{align*}
  Moreover, there exist a $C_K>0$ such that
  \begin{align*}
    \|k^1_{hh}\| \leq C_K (1+x\gamma_{\tK}(\theta^1)), \quad
    \|k^2_{hh}\| \leq C_K\big(1 + h \gamma_{\tK}(\theta^1)
    + (1-x+h) \gamma_{\tK}(\theta^2)\big)
  \end{align*}
  for every $t\in[0,T]$, $h\in[0,\min(T-t,h_{\max})]$,
  $x\in[\half,1]$, and $v\in\R^d$ with $\|v\|\leq K$.
\end{lemma}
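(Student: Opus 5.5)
The plan is to treat $k^1$ and $k^2$ as functions of $h$ defined implicitly by $G_1(h,k):=k-f(t+hx,v+hxk)=0$ and $G_2(h,k):=k-f(t+h(1-x),v+h(1-2x)k^1(h)+hxk)=0$, and to apply the implicit function theorem in $h$.

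\textbf{Existence and uniqueness of $\kones,\ktwos$.} By Lemma~\ref{lem:implied_continuity}, $f_t$ and $f_u$ have continuous representatives, so $G_1$ is $C^1$. Its $k$-derivative is $I-hxf_u(\ellonearg)$, which is invertible by Lemma~\ref{lem:fuest} (with $hx$ in place of $h$), since $hx\nu\le h_{\max}\max(0,\nu)<1$. Hence $h\mapsto k^1$ is $C^1$. Differentiating the identity $k^1=f(\theta^1,\ell^1)$ by the chain rule, using $\partial_h\theta^1=x$ and $\partial_h\ell^1=x(k^1+h\kones)=\ellones$, gives the stated formula. Uniqueness holds because the equation is linear in $\kones$: $(I-hxf_u)\kones=xf_t+xf_u k^1$, with an invertible matrix. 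The same argument applies to $k^2$, now with $k^1$ already known to be $C^1$.

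\textbf{Second derivatives.} Since $f_t,f_u$ are only weakly differentiable, with $L^2$-in-time second derivatives, I would differentiate the first-derivative identity once more via the chain rule for Sobolev/absolutely continuous functions along the curve $h\mapsto(\theta^1,\ell^1)$. The derivative of $\ellones=x(k^1+h\kones)$ is $x(2\kones+h\koness)$. The implicit linear equation for $\koness$ again has the matrix $I-hxf_u(\ellonearg)$, which gives existence and uniqueness. Justifying this chain rule for weak derivatives composed with a curve whose time component moves at speed $x\ge\half>0$ is, I expect, the main technical obstacle. I would argue via the fundamental theorem of calculus in $\tau$, and by approximating $f$ by mollifications and passing to the limit using Assumption~\ref{ass:f2_bound}. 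The $k^2$ case is identical, with $\theta^2$ having speed $1-x\ge0$.

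\textbf{Bounds.}
\begin{itemize}[label={}]
\item $\|k^1\|,\|k^2\|$ are bounded by \eqref{eq:k12_bound}, and therefore $\|\ell^1\|,\|\ell^2\|\le K+3h_{\max}\max(\|k^1\|,\|k^2\|)$.
\item From the linear equation, $\kones=(I-hxf_u)^{-1}(xf_t+xf_uk^1)$. Using Lemma~\ref{lem:fuest} and the continuity of $f_t,f_u$ on the compact set $[0,T]\times B(0)$ (Lemma~\ref{lem:implied_continuity}), this is bounded by a constant depending only on $K$. The bounds on $\ellones,\ktwos,\elltwos$ follow in the same way, and $\tK$ is taken as the maximum of all these constants.
\item Solving for $\koness$ via $(I-hxf_u)^{-1}$ and applying Assumption~\ref{ass:f2_bound}, each second-order term $x^2f_{tt},\,2xf_{tu}\ellones,\,f_{uu}[\ellones]^2$ is bounded by $C\gamma_{\tK}(\theta^1)$ times a factor containing $x$. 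The remaining term $2xf_u\kones$ is bounded by a constant. This gives $C_K(1+x\gamma_{\tK}(\theta^1))$.
\item For $\ktwoss$, the $f_{tt},f_{tu},f_{uu}$ terms evaluated at $\theta^2$ contribute $C(1-x+h)\gamma_{\tK}(\theta^2)$. This needs care to extract the factor $(1-x+h)$: $(1-x)^2\le 1-x$, and $\elltwos=(1-2x)k^1+xk^2+O(h)$ must be written so that its $O(1)$ part produces the $1-x$ factor, possibly through regrouping and using $1-2x$. I would try bounding $\|\elltwos\|$ crudely first and, if the factor is needed, use $|1-2x|\le1$ and the structure of the equation.
\item The term $(1-2x)f_u h\koness$ contributes $Ch(1+\gamma_{\tK}(\theta^1))$, which yields the $h\gamma_{\tK}(\theta^1)$ part. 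The term $xf_uh\ktwoss$ is absorbed by the inverse matrix $(I-hxf_u)^{-1}$.
\end{itemize}
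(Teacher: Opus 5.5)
\textbf{What matches the paper.} Your handling of $\kones,\ktwos$ (implicit function theorem, with invertibility of $I-hxf_u$ from Lemma~\ref{lem:fuest}) matches the paper. So does your handling of $\koness,\ktwoss$ by differentiating the first-order identities. The paper does not prove a weak chain rule either; it differentiates $\kones=(I-hxf_u)^{-1}(xf_t+xf_u\kone)$ using $(V^{-1})_h=-V^{-1}V_hV^{-1}$. Your bounds on $\kone,\ktwo,\kones,\ktwos,\ellone,\elltwo,\ellones$ and $\koness$ also agree with the paper. One small addition: the local branch given by the implicit function theorem coincides with $\kone$ because of the uniqueness of the root from Lemma~\ref{lem:exist_measble}.

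\textbf{The gap.} It is the factor $(1-x+h)$ in front of $\gamma_{\tK}(\theta^2)$ in the bound for $\ktwoss$. You leave this open and propose to obtain it from $|1-2x|\le 1$, which cannot work. Among the second-derivative terms evaluated at $\theta^2$, only $f_{uu}(\elltwoarg)[\elltwos]^2$ lacks an explicit factor $(1-x)$. In $\elltwos=(1-2x)\kone+x\ktwo+O(h)$, the coefficient $1-2x$ is close to $-1$ near $x=1$, not small. Estimating via $|1-2x|\le1$ therefore gives only $\|\elltwos\|\le C_K$, and hence only $C_K\gamma_{\tK}(\theta^2)$. The smallness comes from a cancellation between $(1-2x)\kone$ and $x\ktwo$ (at $x=1$ the combination is $\ktwo-\kone$), and your proposal never uses it.

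\textbf{The missing step.} The paper writes
\[
\elltwos=(1-x)\kone+x(\ktwo-\kone)+h\big((1-2x)\kones+x\ktwos\big),
\qquad
\|\ktwo-\kone\|=\|f(\elltwoarg)-f(\ellonearg)\|\le C_K h .
\]
The last bound follows from the local Lipschitz estimate \eqref{eq:f0_Lip}, using $|\theta^2-\theta^1|=h(2x-1)$ and $\|\elltwo-\ellone\|=h\|(1-3x)\kone+x\ktwo\|\le C_Kh$. This gives $\|\elltwos\|\le C_K(1-x+h)$. Since $1-x+h$ is bounded, it follows that
\[
\|f_{uu}(\elltwoarg)[\elltwos]^2\|\le C_K(1-x+h)^2\gamma_{\tK}(\theta^2)\le C_K(1-x+h)\gamma_{\tK}(\theta^2).
\]

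\textbf{Why the factor matters.} In Lemma~\ref{lem:E_norm_loc_cond}, the weight $(1-x+\tau)^2$ is what lets the $\gamma_{\tK}(\theta^2)$-contribution, integrated over $x$ and $\tau$, be bounded by $C\int_{t_n}^{t_{n+1}}\gamma_{\tK}^2$. Without it you would obtain $\int_0^{h/2}\gamma_{\tK}^2(t_n+s)\log\frac{h}{2s}\diff{s}$, which need not be finite for $\gamma_{\tK}\in L^2$.
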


\begin{proof}
  Let $K > 0$, $t\in[0,T]$, $h\in[0,\min(T-t,h_{\max})]$,
  $x\in[\half,1]$, and $v\in\R^d$ with $\|v\|\leq K$ be arbitrary.
  First we bound $\kone$ and $\ktwo$. In order to do this, we apply
  Lemma~\ref{lem:exist_measble}, where we replace the random variables $\xi$ and
  $\eta$ with $x$ and $v$, respectively. Thus, we have
  \begin{equation*}
    \|\kone\|\leq (1-h_{\max}\nuplus)^{-1}M_{K_1}
  \end{equation*}
  and
  \begin{equation*}
    \|\ktwo\|\leq (1-h_{\max}\nuplus)^{-1}M_{K_2},
  \end{equation*}
  where $K_1=K$ and $K_2=K+h_{\max}(1-h_{\max}\nuplus)^{-1}M_{K_1}$ with
  $M_{K_1}$ and $M_{K_2}$ from Assumption~\ref{ass:f_low}.
  Recall that $\ellone$ and $\elltwo$ are linear combinations of $v$, $\kone$
  and $\ktwo$. Thus, we have
  \begin{align*}
    \|\ellone\|
    \leq K + h_{\max} (1-h_{\max}\nuplus)^{-1} M_{K_1}
  \end{align*}
  and
  \begin{equation*}
    \|\elltwo\| \leq K + h_{\max} (1-h_{\max}\nuplus)^{-1}
    \big(M_{K_1} + M_{K_2} \big).
  \end{equation*}
  Preliminarily, we set $\tK$ equal to the maximum of all these bounds.

  The next step is to prove that the partial derivatives of $\kone$ and $\ktwo$
  with respect to $h$ exist. We apply the implicit function theorem to obtain
  the first partial derivatives. The second derivatives of the two functions can
  be obtained by a suitable derivation of the terms.
  We begin to provide the needed regularity for $\kone$. By definition, the
  vector $\kone(t,h,x,\initvec)$ is the unique root of the function $\genvec
  \mapsto g_1(t,h,x,\initvec;\genvec):= \genvec - f(t+h x, \initvec+h x
  \genvec)$.
  The Jacobian of $g_1$ with respect to~$\genvec$ is given by
  \begin{equation*}
    D_{\genvec}[g_1(t,h,x,\initvec;\genvec)]=I-hxf_u(t+h x, \initvec+h x\genvec).
  \end{equation*}
  From Lemma~\ref{lem:fuest}, it follows that the Jacobian
  $D_{\genvec}[g_1(t,h,x,\initvec;\genvec)]$ is injective and therefore also
  invertible as a $d \times d$-matrix.
  We apply the implicit function theorem (\cite[Theorem~15.1]{Deimling.1985}) with $t,x$ and
  $\initvec$ fixed. Thus, we obtain the local existence of a continuously
  differentiable function $h \mapsto \kone(t,h,x,v)$ that satisfies
  $g_1(t,h,x,\initvec;\kone(t,h,x,v))=0$.
  Since Lemma~\ref{lem:exist_measble} already verified existence of
  $\kone(t,\cdot,x,v)$ on the interval $[0,\min(T-t,h_{\max})]$, it follows
  that the function is uniquely defined on the interval $[0,\min(T-t,h_{\max})]$.
  The derivative is given by
  \begin{equation}\label{eq:k1s}
    \begin{split}
      \kones &= - \big(D_{\genvec}[g_1(t,h,x,\initvec; \kone)]\big)^{-1} D_{h}[g_1(t,h,x,\initvec;\kone)]\\
      &=\big(I-h xf_u(\ellonearg)\big)^{-1} \big(x f_t(\ellonearg)
      + x f_u(\ellonearg) \kone\big)
    \end{split}
  \end{equation}
  as a consequence of the implicit function theorem.
  To obtain the implicit equation that describes $\kones$, we multiply
  $\kones$ by $I-hxf_u(\ellonearg)$ and add $hxf_u(\ellonearg)\kones$ to both
  sides of the equation.
  To bound $\kones$, we apply the explicit expression from \eqref{eq:k1s} and
  the second bound from Lemma~\ref{lem:fuest} to obtain
  \begin{align*}
    \| \kones \|
    &= \big\|  \big(I-h xf_u(\ellonearg)\big)^{-1} \big(x f_t(\ellonearg)
    + x f_u(\ellonearg)  f(\ellonearg)\big) \big\|\\
    &\leq (1 - hx \nu)^{-1} \big( \|f_t(\ellonearg)\|
    + \|f_u(\ellonearg)\| \|f(\ellonearg)\| \big)\\
    &\leq (1 - h_{\max} \nuplus)^{-1} \big( \|f_t(\ellonearg)\|
    + \|f_u(\ellonearg)\| \|f(\ellonearg)\| \big).
  \end{align*}
  Since $f$ is continuously differentiable and all the arguments of $f$, $f_t$
  and $f_u$ lie in a compact set $[0,T] \times B_R(0)$ with radius $R$
  depending on $\tK$ from the established bound of $\ellone$,
  it follows that $\|\kones\|$ is also bounded by a constant
  depending purely on $K$.
  Writing out the partial derivative $\ellones$ of $\ellone$, we find
  \begin{align*}
    \|\ellones\| &=\|x(\kone+h\kones)\|\leq x \|\kone\| + hx \|\kones\|
    \leq \| f(\ellonearg) \| + h_{\max} \|\kones\|.
  \end{align*}
  Again, all terms are bounded by constants depending on $K$. By possibly
  enlarging $\tK$, we denote the maximum of all bounds again by $\tK$.

  Next, we show that $\koness$ exists and is bounded as claimed. Note that the implicit function theorem as stated in \cite[Theorem~15.1]{Deimling.1985} is not applicable to prove the existence of $\koness$, as this function is not continuous in its first argument.
  Here, we apply that for a matrix valued function $V_h \colon [0,T] \to \R^{d,d}$,
  its derivative of the inverse is given by $(V^{-1})_h(h)=-(V^{-1}V_hV^{-1})(h)$
  in the first step and insert \eqref{eq:k1s} in the second. Then we obtain
  \begin{align}\label{eq:k1ss}
    \begin{split}
      \koness
      &= - \big(I-hxf_u(\ellonearg)\big)^{-1} \big(I-hxf_u(\ellonearg)\big)_h
      \big(I-hxf_u(\ellonearg)\big)^{-1}\big(x f_t(\ellonearg) + x f_u(\ellonearg)
      \kone\big)\\
      &\quad + \big(I-hxf_u(\ellonearg)\big)^{-1} \big( xf_t(\ellonearg)
      + xf_u(\ellonearg)\kone\big)_h\\
      &= \big(I-hxf_u(\ellonearg)\big)^{-1} \big(xf_u(\ellonearg)
      + h x^2f_{tu}(\ellonearg)+ h x f_{uu}(\ellonearg)[\ellones, \cdot]\big)\kones\\
      &\quad + \big(I-hxf_u(\ellonearg)\big)^{-1} \big( x^2 f_{tt}(\ellonearg)
        + x f_{tu}(\ellonearg) \ellones + x^2 f_{tu}(\ellonearg)\kone\\
      &\hspace{4cm}+ x f_{uu}(\ellonearg)[\ellones,\kone]
      + xf_u(\ellonearg)\kones\big)\\
      & = \big(I-hxf_u(\ellonearg)\big)^{-1} \big(x^2 f_{tt}(\ellonearg)
      + 2x f_{tu}(\ellonearg)\ellones +  f_{uu}(\ellonearg)[\ellones]^2
      + 2x f_u(\ellonearg)\kones
      \big).
    \end{split}
  \end{align}
  For the implicit equation that describes $\koness$, we multiply the previous equation first by
  $I-hxf_u(\ellonearg)$ and then add $hxf_u(\ellonearg)\koness$ to both sides.
  To show the bound for the second partial derivatives of $\kone$, we apply the
  explicit definitions from \eqref{eq:k1ss} and insert the second
  bound from Lemma~\ref{lem:fuest} to find
  \begin{align*}
    \|\koness\|
    &\leq (1 - h x \nu)^{-1} \big( x^2 \|f_{tt}(\ellonearg)\|
    + 2 x \| f_{tu}(\ellonearg)\| \|\ellones\|
    +  \|f_{uu}(\ellonearg)\| \|\ellones\|^2
    + 2x\|f_u(\ellonearg)\| \|\kones\|
    \big).
  \end{align*}
  Due to the continuous differentiability of $f$ and $\|\ellone\| \le \tK$ we
  obtain that $\|f_u(\ellonearg)\|$ is bounded by a constant depending on $K$.
  For $\|f_{tt}(\ellonearg)\|$, $\| f_{tu}(\ellonearg)\|$, and
  $\|f_{uu}(\ellonearg)\|$, we apply Assumption~\ref{ass:f2_bound}
  for $\kappa = \tK$. Together this yields
  \begin{align*}
    \|\koness\|
    &\leq C_K \big( x^2 \|f_{tt}(\ellonearg)\|
    + 2x \| f_{tu}(\ellonearg) \|
    + x^2 \| f_{uu}(\ellonearg) \| \|\kone+h\kones\|^2 + 1 \big)\\
    &\leq C_K(1+ x \gamma_{\tK}(\theta^1)).
  \end{align*}

  Next, we prove an analogous statement for $\ktwo$.
  Since the arguments are essentially the same but with slightly different
  functions, we will not present all steps in as much detail but refer the
  reader to the corresponding argument for $\kone$, except for the bounds
  of $\ktwoss$.
  By definition, the vector $\ktwo(t,h,x,v)$ is the unique root of the
  function $\genvec \mapsto g_2(t,h,x,\initvec;\genvec)
  := \genvec - f(t+h (1-x), \initvec+h (1-2x)\kone+h x \genvec)$.
  The Jacobian of $g_2$ with respect to $\genvec$ is
  \begin{equation*}
    D_{\genvec}[g_2(t,h,x,\initvec;\genvec)]
    =I- h xf_u(t+h (1-x), \initvec+h (1-2x)\kone+h x \genvec).
  \end{equation*}
  By Lemma~\ref{lem:fuest}, it follows that
  $D_{\genvec}[g_2(t,h,x,\initvec;\genvec)]$ is invertible.
  So we can use \cite[Theorem~15.1]{Deimling.1985} to prove the local differentiability of
  $\ktwo$ with respect to $h$.
  Due to Lemma~\ref{lem:exist_measble} the partial derivative with respect to
  $h$ is uniquely defined on the entire interval $[0,\min(T-t,h_{\max})]$ and is
  given by
  \begin{align} \label{eq:k2s}
    \begin{split}
      \ktwos&= -\big(D_{\genvec}[g_2(t,h,x,\initvec;\ktwo)]\big)^{-1}
      D_{h}[g_2(t,h,x,\initvec;\ktwo)]\\
      &=\big(I- h xf_u(\elltwoarg)\big)^{-1} \big((1-x) f_t(\elltwoarg)
      + (1-2x) f_u(\elltwoarg) (\kone+h\kones)+x f_u(\elltwoarg) \ktwo\big).
    \end{split}
  \end{align}
  Analogously to the calculation for $\koness$, we apply
  $(V^{-1})_h(h)=-V^{-1}V_hV^{-1}(h)$ in the first step and insert
  \eqref{eq:k2s} in the second and obtain
  \begin{align}\label{eq:k2ss}
    \begin{split}
      \ktwoss
      &=-\big(I- h xf_u(\elltwoarg)\big)^{-1} \big(I- h xf_u(\elltwoarg)\big)_h
      \big(I- h xf_u(\elltwoarg)\big)^{-1} \\
      &\qquad \times \big((1-x) f_t(\elltwoarg) + (1-2x) f_u(\elltwoarg)
      (\kone+h\kones)+x f_u(\elltwoarg)  \ktwo\big)\\
      &\quad + \big(I- h xf_u(\elltwoarg)\big)^{-1} \big((1-x) f_t(\elltwoarg)
      + (1-2x) f_u(\elltwoarg)  (\kone+h\kones)+x f_u(\elltwoarg) \ktwo\big)_h\\
      &=\big(I- h xf_u(\elltwoarg)\big)^{-1} \big( xf_u(\elltwoarg)
      + h x(1-x)f_{tu}(\elltwoarg) + h xf_{uu}(\elltwoarg)[\elltwos]\big) \ktwos \\
      &\quad + \big(I- h xf_u(\elltwoarg)\big)^{-1} \Big(
      (1-x)^2 f_{tt}(\elltwoarg) + (1-x) f_{tu}(\elltwoarg)\elltwos\\
      &\qquad
      + (1-2x) \big((1-x) f_{tu}(\elltwoarg)  (\kone+h\kones)
      + f_{uu}(\elltwoarg)[\elltwos,\kone+h\kones] \\
      &\qquad + f_u(\elltwoarg)  (2\kones+h\koness)\big)
      + x (1-x) f_{tu}(\elltwoarg)  \ktwo
      + x f_{uu}(\elltwoarg)[\elltwos,\ktwo]
      + x f_u(\elltwoarg)  \ktwos \Big)\\
      &=\big(I- h xf_u(\elltwoarg)\big)^{-1} \Big( (1-x)^2 f_{tt}(\elltwoarg)
        + f_u(\elltwoarg)((1-2x) (2\kones+h\koness) + 2 x\ktwos )\\
      &\quad + (1-x)f_{tu}(\elltwoarg)(\elltwos + (1-2x) (\kone+h\kones)
      + x (\ktwo + h \ktwos) )\\
      &\quad + f_{uu}(\elltwoarg)[\elltwos,(1-2x)(\kone+h\kones)
      + x (\ktwo+ h \ktwos)]\Big) \\
      &=\big(I- h xf_u(\elltwoarg)\big)^{-1} \Big( (1-x)^2 f_{tt}(\elltwoarg)
        + 2 (1-x)f_{tu}(\elltwoarg)\elltwos + f_{uu}(\elltwoarg)[\elltwos]^2\\
      &\quad + f_u(\elltwoarg)((1-2x) (2\kones+h\koness) + 2 x\ktwos )\Big).
    \end{split}
  \end{align}
  Next, we bound the term $\ktwos$. We recall the explicit expression from
  \eqref{eq:k2s} and again apply the second
  bound from Lemma~\ref{lem:fuest} to see
  \begin{align*}
    \| \ktwos\|
    &= \big\| \big(I- h xf_u(\elltwoarg)\big)^{-1} \big((1-x) f_t(\elltwoarg)
    + (1-2x) f_u(\elltwoarg)  (f(\ellonearg)+h\kones)\\
    &\qquad +x f_u(\elltwoarg)  f(\elltwoarg)\big) \big\|\\
    &\leq (1 - h_{\max} \nu)^{-1} \big( \|f_t(\elltwoarg)\|
    + \| f_u(\elltwoarg)\| (\|f(\ellonearg)\| + h_{\max} \|\kones\| )
    + \|f_u(\elltwoarg)\| \|f(\elltwoarg)\| \big).
  \end{align*}
  Since $\kones$ is bounded with respect to a constant that depends on $K$ and
  all other appearing terms can be argued analogously as in the first part of the proof, $\ktwos$ is also
  bounded by a constant that depends on $K$.
  Inserting the definition of $\elltwos$, we find
  \begin{align*}
    \|\elltwos\| \leq |1-2x| (\|\kone\| + h\|\kones\|) + x (\|\ktwo\| + h \|\ktwos\|)
    \leq \|f(\ellonearg)\| + h_{\max} \|\kones\| + \|f(\elltwoarg)\|
    + h_{\max} \|\ktwos\|.
  \end{align*}
  By possibly again enlarging $\tK$, it therefore follows that $\ktwo$, $\ktwos$, $\elltwo$, and $\elltwos$ are bounded by this constant which depends on $K$.

  To bound $\ktwoss$ as claimed, we need a sharper bound of $\elltwos$ given by
  \begin{align*}
    \|\elltwos\| \leq (1-x)\|\kone\|+x\|\ktwo-\kone\| + h \|\kones+\ktwos\|
    \leq (1-x+h) C_K,
  \end{align*}
  where we used
  \begin{equation*}
    \|\ktwo-\kone\|= \|f(\elltwoarg)-f(\ellonearg)\|
    \leq (2x-1)h C_K + hC_K\| (1-3x)\kone + x\ktwo\| \leq h C_K,
  \end{equation*}
  which holds due to the Lipschitz continuity from~\eqref{eq:f0_Lip} in Lemma~\ref{lem:implied_continuity}.
  To show the bound for the second partial derivative of $\ktwo$, we apply the
  explicit definition from \eqref{eq:k2ss} and apply the second
  bound from Lemma~\ref{lem:fuest}. We can further argue with the continuous differentiability of $f$ and the proven
  boundedness of $\ellone$ and $\elltwo$ that $\|f_u(\elltwoarg)\|$ is bounded
  by a constant depending on $K$.
  For $\|f_{tt}(\elltwoarg)\|$, $\| f_{tu}(\elltwoarg)\|$, and
  $\|f_{uu}(\elltwoarg)\|$, we apply Assumption~\ref{ass:f2_bound} for the
  constant $\kappa = \tK$ and we use the sharper bound of $\elltwos$. This combination then implies that
  \begin{align*}
    \|\ktwoss\|
    &\leq (1 - h_{\max} \nu)^{-1} \big( (1-x)\|f_{tt}(\elltwoarg)\|
    + (1-x)\|f_{tu}(\elltwoarg)\| \|\elltwos\|
    + \|f_{uu}(\elltwoarg)\| \|\elltwos\|^2\\
    &\quad + \|f_u(\elltwoarg)\| (2\|\kones\|+h \|\koness\| + 2 \|\ktwos\| )
    \big)\\
    &\leq C_K \big( (1-x)\gamma_{\tK}(\theta^2)
    + (1-x)\gamma_{\tK}(\theta^2) + (1-x+h)^2 \gamma_{\tK}(\theta^2)
    + 1 + h x \gamma_{\tK}(\theta^1) \big)\\
    &\leq C_K \big(1 + h\gamma_{\tK}(\theta^1)
    + (1-x+h)\gamma_{\tK}(\theta^2)
    \big).
  \end{align*}
\end{proof}

Now we can rewrite $\Phi(t,h,x,\initvec)$ in such a way that the first and
second-order terms coincide with the exact flow $\Psi(\genvec,t,\tau)$ from
\eqref{eq:def_Psi}.

\begin{lemma}\label{lem:taylor_approx}
  Let Assumption~\ref{ass:f_high_conv} be fulfilled. Then the function $h
  \mapsto \Phi(t,h,x,\initvec)$ can be written as
  \begin{equation}\label{eq:Taylor_Phi}
    \begin{split}
      \Phi(t,h,x,\initvec)
      &=\initvec+ hf(t,\initvec)+ \frac{h^2}{2} \big(f_t(t,\initvec)
      +f_u(t,\initvec)f(t,\initvec) \big)\\
      &\quad +\frac{h}{2}\int_0^h (h-\tau) \big(\koness(t,\tau,x,v)
      +\ktwoss(t,\tau,x,v) \big) \diff{\tau},
    \end{split}
  \end{equation}
  for $h\in[0,\min(T-t,h_{\max})]$.
\end{lemma}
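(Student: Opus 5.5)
Since $\Phi(t,h,x,v) = v + \frac{h}{2}\big(\kone(t,h,x,v)+\ktwo(t,h,x,v)\big)$, I will Taylor-expand the stage functions $h\mapsto \kone(t,h,x,v)$ and $h\mapsto \ktwo(t,h,x,v)$ to first order around $h=0$, using an integral remainder. For $i\in\{1,2\}$ I will write
\begin{equation*}
  k^i(t,h,x,v) = k^i(t,0,x,v) + h\, k^i_h(t,0,x,v) + \int_0^h (h-\tau)\, k^i_{hh}(t,\tau,x,v)\diff{\tau}.
\end{equation*}
This is justified by Lemma~\ref{lem:bounds}. There $k^i_h$ is shown to be continuous in $h$: it is given explicitly by \eqref{eq:k1s} and \eqref{eq:k2s} in terms of the continuous functions $f_t$, $f_u$ (Lemma~\ref{lem:implied_continuity}) and the continuous $k^1,k^2$. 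Moreover $k^i_h$ is differentiable in $h$ with derivative $k^i_{hh}$, bounded by $C_K(1+\gamma_{\tK}(\theta^1)+\gamma_{\tK}(\theta^2))$.

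The main obstacle is that the second derivatives of $f$ exist only weakly, so $k^i_{hh}$ may exist only almost everywhere. The expansion therefore needs $k^i_h$ to be absolutely continuous in $h$. I would argue this through the chain rule for Sobolev-type functions composed with the absolutely continuous curves $\tau\mapsto(\theta^i(\tau),\ell^i(\tau))$. The dominating bound is $\gamma_{\tK}(t+\tau x)$, and $\tau\mapsto \gamma_{\tK}(t+\tau x)$ is $L^2$ and hence $L^1$ on $[0,h]$ after the change of variables $s=t+\tau x$, with $x\ge\frac12$ bounded away from $0$. This gives integrability of $k^i_{hh}$, so the fundamental theorem of calculus applies twice. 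If a direct chain-rule argument is delicate, I would fall back on mollifying $f$ in $t$, proving the formula for smooth $f$, and passing to the limit using the bounds of Lemma~\ref{lem:bounds} and dominated convergence.

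With the expansion in hand, I evaluate at $h=0$. There $\ell^1=\ell^2=v$ and $\theta^1=\theta^2=t$, so $\kone(t,0,x,v)=\ktwo(t,0,x,v)=f(t,v)$. From the implicit equations of Lemma~\ref{lem:bounds}, $\ellones|_{h=0}=x f(t,v)$, which gives
\begin{equation*}
  \kones|_{h=0}=x\big(f_t+f_uf\big)(t,v).
\end{equation*}
Similarly, $\elltwos|_{h=0}=(1-2x)f+xf=(1-x)f$, which gives
\begin{equation*}
  \ktwos|_{h=0}=(1-x)\big(f_t+f_uf\big)(t,v).
\end{equation*}
Summing, the first-order coefficient is $\kones+\ktwos=f_t+f_uf$, independent of $x$, in line with the second-order condition in Remark~\ref{rem:order_conds}. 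Substituting into $\Phi = v+\frac h2(\kone+\ktwo)$ yields
\begin{equation*}
  v+hf(t,v)+\frac{h^2}{2}\big(f_t+f_uf\big)(t,v)+\frac h2\int_0^h(h-\tau)\big(\koness+\ktwoss\big)(t,\tau,x,v)\diff{\tau},
\end{equation*}
which is the claimed representation \eqref{eq:Taylor_Phi}.
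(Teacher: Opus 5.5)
Your proposal is correct and follows the paper's proof: the paper likewise writes $\Phi = v + \frac{h}{2}(\kone+\ktwo)$ and inserts the first-order Taylor expansion of $\kone,\ktwo$ in $h$ with integral remainder, justified only by reference to Lemma~\ref{lem:bounds}; it then uses $\kone=\ktwo=f(t,v)$ and $\kones+\ktwos=f_t+f_uf$ at $h=0$. Your explicit evaluations $\kones|_{h=0}=x(f_t+f_uf)(t,v)$ and $\ktwos|_{h=0}=(1-x)(f_t+f_uf)(t,v)$, and your remark that the expansion needs absolute continuity of $k^i_h$ in $h$, are details the paper leaves implicit.
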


\begin{proof}
  By the definition of $\Phi$ stated in \eqref{eq:Phidef}, we find
  \begin{align*}
    \Phi(t,h,x,\initvec) = \initvec + \frac{h}{2} \big(\kone(t,h,x,\initvec) +
    \ktwo(t,h,x,\initvec) \big).
  \end{align*}
  Inserting a first-order Taylor expansion w.r.t.\ the step size $h$ of $\kone$ and $\ktwo$
  with integral remainder terms from Lemma~\ref{lem:bounds}, we obtain
  \begin{align*}
    \Phi(t,h,x,\initvec)
    &=\initvec + \frac{h}{2} \big(\kone(t,0,x,\initvec) + \ktwo(t,0,x,\initvec)
    + h \big(\kone_{h}(t,0,x,\initvec) + \ktwo_{h}(t,0,x,\initvec)\big)\big)\\
    &\quad + \frac{h}{2} \int_{0}^{h} (h -\tau) \big(\koness(t,\tau,x,\initvec)
    + \ktwoss(t,\tau,x,\initvec)\big) \diff{\tau}\\
    &=\initvec + h f(t,\initvec) + \frac{h^2}{2} \big( f_t(t,\initvec) +
    f_u(t,\initvec)f(t,\initvec) \big)\\
    &\quad + \frac{h}{2} \int_{0}^{h} (h -\tau) \big(\koness(t,\tau,x,\initvec)
    + \ktwoss(t,\tau,x,\initvec)\big) \diff{\tau}.
  \end{align*}
\end{proof}

In the following, we want to compare the remainder term of the exact flow $\Psi$
from \eqref{eq:def_Psi} to the remainder term of the numerical approximation
$\Phi$ from \eqref{eq:Phidef}.
Since our randomized RK method satisfies the third-order conditions in
expectation only, we need to take the expectation over $\koness$
and $\ktwoss$ to cancel out all lower-order terms.
First note that inserting the integral identity from Lemma~\ref{lem:calc}, we
can rewrite the second-order remainder term of $\Psi$ as follows
\begin{align*}
  &\frac{1}{2}\int_0^h (h-\tau)^2F(t+\tau, \Psi(\genvec,t,\tau))\diff{\tau}\\
  &=h\int_0^h (h-\tau)\int_0^1 x^2F(t+\tau x, \Psi(\genvec,t,\tau x))\diff{x}\diff{\tau}\\
  &=2\int_{\half}^1\frac{h}{2}\int_0^h (h-\tau) \big[x^2F(t+\tau x, \Psi(\genvec,t,\tau x))
  +(1-x)^2F(t+\tau (1-x), \Psi(\genvec,t,\tau (1-x)))\big]\diff{\tau}\diff{x}.
\end{align*}
The integral $2\int_{\half}^1\cdot\diff{x}$ is a substituted version of the
expectation that appears in Section~\ref{sec:convergence}.
Altogether, the aim in the rest of this section is to bound the term
\begin{equation}\label{eq:goal_taylor}
  x^2F(t+\tau x, \Psi(\genvec,t,\tau x)) + (1-x)^2F(t+\tau (1-x),
  \Psi(\genvec,t,\tau (1-x))) - \koness(t,\tau,x,\initvec)
  - \ktwoss(t,\tau,x,\initvec).
\end{equation}

We begin with the following result, where we express $\kone$ and
$\ktwo$ in terms of $F_1$ and $F_2$ and bound the remaining terms.

\begin{lemma}\label{lem:diff_kss_F}
  Let Assumption~\ref{ass:f_high_conv} be fulfilled.
  Then for every $K>0$, there exists a constant $C_K>0$ such that
  \begin{align*}
    \big\|\koness(t,h,x,v) - x^2 F_1(\ellonearg) - 2 x^2 F_2(\ellonearg)\big\|
    &\leq h C_K \gamma_{\tK}(\theta^1)\\
    \big\| \ktwoss(t,h,x,v) - (1-x)^2 F_1(\elltwoarg)
    - (4x-6x^2) F_2(\elltwoarg) \big\|
    &\leq h^{\frac{1}{2}} C_K + h C_K \big(1+\tL_{\tK}(\theta^1)\\
    &\hspace{3cm}+\gamma_{\tK}(\theta^1)
    + \gamma_{\tK}(\theta^2)\big)
  \end{align*}
  is fulfilled for every $t\in[0,T]$, $h\in[0,\min(T-t,h_{\max})]$,
  $x\in[\half,1]$, and $v\in\R^d$ such that $\|v\|\leq K$,
  where $\tK$, $\theta^1$, $\theta^2$, $\ellone$, $\elltwo$ are stated in
  Lemma~\ref{lem:bounds}.
\end{lemma}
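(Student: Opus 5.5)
The plan is to work directly from the implicit equations for $\koness$ and $\ktwoss$ in Lemma~\ref{lem:bounds}. In each, we replace the auxiliary quantities $\ellones$, $\elltwos$, $\kones$, $\ktwos$, $\kone$, $\ktwo$ by their leading-order values. Every substitution error is then bounded by $h$ times a constant, or by $h$ times an integrable weight. The leading-order values are $\kone \approx f(\ellonearg)$, $\kones \approx x(f_t + f_u f)$, $\ellones \approx x f$, and so on. Throughout, the bounds of Lemma~\ref{lem:bounds} (all of $\kone,\ktwo,\kones,\ktwos,\ellone,\elltwo,\ellones,\elltwos$ bounded by $\tK$) and Lemma~\ref{lem:implied_continuity} are used freely.

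\textbf{The first stage.} Start from
\[
\koness = x^2 f_{tt} + 2x f_{tu}\ellones + f_{uu}[\ellones]^2 + x f_u(2\kones + h\koness),
\]
with all derivatives evaluated at $(\ellonearg)$. Since $\ellones = x\kone + hx\kones$ and $\kone = f(\ellonearg)$ exactly, we have $\ellones = x f(\ellonearg) + O(h)$. Hence
\[
2x f_{tu}\ellones + f_{uu}[\ellones]^2 = x^2\big(2f_{tu}f + f_{uu}[f]^2\big) + O\big(h\gamma_{\tK}(\theta^1)\big),
\]
using Assumption~\ref{ass:f2_bound}. Together with $x^2 f_{tt}$, this produces $x^2F_1(\ellonearg)$.

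Next, the first equation of Lemma~\ref{lem:bounds} gives $\kones = x f_t + f_u \ellones = x(f_t + f_u f) + O(h)$, so $2x f_u\kones = 2x^2F_2(\ellonearg) + O(h)$. Finally, $h x f_u\koness$ is bounded by $hC_K(1+\gamma_{\tK}(\theta^1))$ by Lemma~\ref{lem:bounds}. The constant part $hC_K$ is absorbed into $hC_K\gamma_{\tK}$ after enlarging $\gamma$ to $\max(1,\gamma)$ if needed; note that $\gamma$ may be taken $\ge 1$ without loss of generality.

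\textbf{The second stage.} This is the main obstacle. The expansion is the same, except that $f_u(\elltwoarg)$ now multiplies $\kones$, which lives at $(\ellonearg)$, and those terms must be moved to $(\elltwoarg)$. First, $\elltwos = (1-2x)\kone + x\ktwo + O(h)$, and $\ktwo - \kone = O(h)$ as shown in the proof of Lemma~\ref{lem:bounds}. Hence $\elltwos = (1-x)f(\elltwoarg) + O(h)$. This gives
\[
(1-x)^2 f_{tt} + 2(1-x)f_{tu}\elltwos + f_{uu}[\elltwos]^2 = (1-x)^2F_1(\elltwoarg) + O\big(h\gamma_{\tK}(\theta^2)\big).
\]
For the $f_u$ terms, write $2(1-2x)f_u(\elltwoarg)\kones + 2x f_u(\elltwoarg)\ktwos$. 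Here $\ktwos = (1-x)f_t(\elltwoarg) + f_u(\elltwoarg)\elltwos = (1-x)(f_t+f_uf)(\elltwoarg) + O(h)$, while $\kones = x(f_t+f_uf)(\ellonearg) + O(h)$.

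To transfer $\kones$ to $(\elltwoarg)$, apply the $F_2$-type continuity estimate, namely the bound for $(t_1,v_1)\mapsto (f_t + f_u f)$ analogous to Lemma~\ref{lem:lipschitzF}. This uses $|\theta^1 - \theta^2| \le h$ and $\|\ellone - \elltwo\| \le Ch$, giving an error $C_K(h^{1/2} + h\tL_{\tK}(\theta^1))$. This is precisely the source of the $h^{1/2}$ term and the $\tL_{\tK}(\theta^1)$ term in the claim. Summing the coefficients gives
\[
2x(1-2x) + 2x(1-x) = 4x - 6x^2,
\]
so these terms yield $(4x-6x^2)F_2(\elltwoarg)$. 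The leftover $h(1-2x)f_u\koness + hxf_u\ktwoss$ is bounded via Lemma~\ref{lem:bounds} by $hC_K(1+\gamma_{\tK}(\theta^1)+\gamma_{\tK}(\theta^2))$.

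The remaining care is purely bookkeeping. Every $O(h)$ replacement multiplying a second derivative must be charged to $\gamma_{\tK}$ at the correct node, while those multiplying only first derivatives are bounded by constants via continuity on the compact set $[0,T]\times B_{\tK}(0)$.
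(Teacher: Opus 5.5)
Your proposal is correct and follows essentially the paper's route: substitute the implicit equations of Lemma~\ref{lem:bounds} to isolate the $h$-free parts $x^2F_1+2x^2F_2$ and $(1-x)^2F_1+(4x-6x^2)F_2$. The mismatch between the two stages is then charged to $\|\kone-\ktwo\|\le hC_K$ and to the $\half$-H\"older/Lipschitz bounds on $f_t,f_u$ from Lemma~\ref{lem:implied_continuity}, which is exactly where the paper's $h^{1/2}$ and $\tL_{\tK}(\theta^1)$ terms come from as well. Your remark that the $h$-terms without second derivatives can only be absorbed into $hC_K\gamma_{\tK}(\theta^1)$ after taking $\gamma_{\tK}\ge 1$ is a small point the paper leaves implicit in its first estimate.
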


\begin{proof}
  Note that we will drop the argument $(t,h,x,v)$ in the proof for better
  readability.
  Recalling the implicit form of $\kones$ and $\koness$ stated in
  Lemma~\ref{lem:bounds} and inserting the definitions of $\ellones$, we find
  \begin{align*}
    \koness
    &= x^2 f_{tt}(\ellonearg) + 2x f_{tu}(\ellonearg)\ellones
    +  f_{uu}(\ellonearg)[\ellones]^2
    + 2x f_u(\ellonearg)\kones + h x f_u(\ellonearg) \koness\\
    &= x^2 f_{tt}(\ellonearg) + 2x f_{tu}(\ellonearg) \big(xf(\ellonearg)
    + hx \kones\big) + f_{uu}(\ellonearg)[xf(\ellonearg) + hx \kones]^2\\
    &\quad+ 2x f_u(\ellonearg) \big( x f_t(\ellonearg) + x
    f_u(\ellonearg)f(\ellonearg) + h x f_u(\ellonearg) \kones \big) + h x
    f_u(\ellonearg) \koness\\
    &= x^2 f_{tt}(\ellonearg) + 2x^2 f_{tu}(\ellonearg)f(\ellonearg)
    + x^2 f_{uu}(\ellonearg)[f(\ellonearg)]^2\\
    &\quad+ 2x^2 f_u(\ellonearg) f_t(\ellonearg) + 2x^2 f_u^2(\ellonearg)
    f(\ellonearg)\\
    &\quad + 2h x^2 f_{tu}(\ellonearg) \kones
    +  2 h x^2 f_{uu}(\ellonearg)[f(\ellonearg), \kones]
    +  h^2 x^2 f_{uu}(\ellonearg)[\kones]^2\\
    &\quad+ 2h x^2  f_u^2(\ellonearg) \kones + h x f_u(\ellonearg) \koness\\
    &= x^2 F_1(\ellonearg)+2 x^2 F_2(\ellonearg)+ 2h x^2 f_{tu}(\ellonearg)
    \kones +  2 h x^2 f_{uu}(\ellonearg)[f(\ellonearg), \kones]\\
    &\quad +  h^2 x^2 f_{uu}(\ellonearg)[\kones]^2 + 2h x^2  f_u^2(\ellonearg)
    \kones + h x f_u(\ellonearg) \koness.
  \end{align*}
  Note that due to Lemma~\ref{lem:bounds} the argument $\ellone$ is bounded by a
  constant $\tK$ depending on $K$. Thus, inserting Assumption~\ref{ass:f2_bound} for
  $\kappa = \tK$, shows that
  \begin{align*}
    &\big\| \koness - x^2 F_1(\ellonearg) -2 x^2 F_2(\ellonearg) \big\|\\
    &\leq h x \big( 2x \| f_{tu}(\ellonearg)\| \|\kones\|
    +  2 x \| f_{uu}(\ellonearg)\| \|f(\ellonearg)\| \|\kones\|
    +  h x \| f_{uu}(\ellonearg)\| \|\kones\|^2 \\
    &\quad+ 2x \| f_u(\ellonearg)\|^2 \|\kones\| + \| f_u(\ellonearg)\|
    \| \koness\| \big)\\
    &\leq h C_K \gamma_{\tK}(\theta^1).
  \end{align*}
  For the term $\ktwoss$, we can in principle argue in the same way.
  As $\ktwoss$ contains some additional terms compared with $\koness$, it
  becomes a bit more technical. We begin again by recalling the implicit form of
  $\kones$, $\ktwos$ and $\koness$ stated in Lemma~\ref{lem:bounds} and
  inserting the definitions of $\ellones$ and $\elltwos$ to find
  \begin{align*}
    \ktwoss
    & =(1- x)^2 f_{tt}(\elltwoarg) + 2(1-x)f_{tu}(\elltwoarg)\elltwos
    + f_{uu}(\elltwoarg)[\elltwos]^2
    + (1-2x) f_u(\elltwoarg) ( 2 \kones + h \koness )\\
    &\quad+ x f_u(\elltwoarg) (2 \ktwos + h \ktwoss)\\
    &=(1-x)^2 f_{tt}(\elltwoarg) + 2 (1-x)f_{tu}(\elltwoarg)\elltwos
    + f_{uu}(\elltwoarg)[\elltwos]^2\\
    &\quad + 2 (1-2x) f_u(\elltwoarg) \kones
    + 2 x f_u(\elltwoarg) \ktwos
    + h f_u(\elltwoarg) \big((1-2x) \koness + x \ktwoss \big)\\
    &=(1-x)^2 f_{tt}(\elltwoarg) + 2 (1-x)f_{tu}(\elltwoarg)
    \big((1-2x)f(\ellonearg) + h (1-2x)\kones+ xf(\elltwoarg)+hx\ktwos\big)\\
    &\quad + f_{uu}(\elltwoarg)\big[ (1-2x)f(\ellonearg)
    + h (1-2x)\kones+ xf(\elltwoarg)+hx\ktwos \big]^2\\
    &\quad + 2 (1-2x) f_u(\elltwoarg) \big( x f_t(\ellonearg)
    + x f_u(\ellonearg) f(\ellonearg) + h x f_u(\ellonearg) \kones \big)\\
    &\quad + 2 x f_u(\elltwoarg) \big( (1-x) f_t(\elltwoarg) + f_u(\elltwoarg)
    \big((1-2x) (f(\ellonearg)+h \kones) + x (f(\elltwoarg)+h \ktwos) \big) \big) \\
    &\quad + h f_u(\elltwoarg) \big((1-2x) \koness + x \ktwoss \big)\\
    &= \Gamma_1 + h \Gamma_2,
  \end{align*}
  where we collect all the terms that are led by the factor $h$ in $\Gamma_2$
  and all the others in $\Gamma_1$.
  Apart from some mixed terms containing both $\ellone$ and $\elltwo$,
  $\Gamma_1$ is almost in the form of
  $(1-x)^2 F_1(\elltwoarg) + (4x-6x^2) F_2(\elltwoarg)$.
  Subtracting this term from $\Gamma_1$, we observe
  \begin{align*}
    &\Gamma_1 - (1-x)^2 F_1(\elltwoarg) - (4x-6x^2) F_2(\elltwoarg)\\
    &=(1-x)^2 f_{tt}(\elltwoarg) + 2 (1-x)f_{tu}(\elltwoarg)
    \big((1-2x)f(\ellonearg) + xf(\elltwoarg)\big)\\
    &\quad + f_{uu}(\elltwoarg)\big[(1-2x)f(\ellonearg) + xf(\elltwoarg)\big]^2\\
    &\quad + 2 (1-2x) xf_u(\elltwoarg) \big( f_t(\ellonearg)
    + f_u(\ellonearg) f(\ellonearg) \big)\\
    &\quad + 2 x f_u(\elltwoarg) \big( (1-x) f_t(\elltwoarg) + f_u(\elltwoarg)
    \big((1-2x) f(\ellonearg) + x f(\elltwoarg) \big) \big) \\
    &\quad - (1-x)^2 F_1(\elltwoarg) - (4x-6x^2) F_2(\elltwoarg)\\
    &= 2 (1-x) (1-2x) f_{tu}(\elltwoarg) \big(f(\ellonearg) - f(\elltwoarg)\big)\\
    &\quad+ (1-2x) f_{uu}(\elltwoarg) \big[f(\ellonearg)
    - f(\elltwoarg), (1-2x) f(\ellonearg) + x f(\elltwoarg)\big] \\
    &\quad+ (1-x) (1-2x) f_{uu}(\elltwoarg)
    \big[f(\elltwoarg), f(\ellonearg) - f(\elltwoarg) \big] \\
    &\quad + 2 (x-2x^2) f_u(\elltwoarg) \big( f_t(\ellonearg) - f_t(\elltwoarg)
    + f_u(\ellonearg) f(\ellonearg) - f_u(\elltwoarg) f(\elltwoarg)\big)\\
    &\quad + 2 (x-2x^2) f_u^2(\elltwoarg) \big( f(\ellonearg) - f(\elltwoarg)\big).
  \end{align*}
  Applying Assumption~\ref{ass:f2_bound} for $\kappa = \tK$ and
  Lemma~\ref{lem:bounds}, we can bound the norm of $\Gamma_2$ by
  \begin{align*}
    \|\Gamma_2\|
    &= \big\| 2 (1-x) f_{tu}(\elltwoarg) \big( (1-2x)\kones+ x\ktwos\big)\\
    &\quad + f_{uu}(\elltwoarg)\big[(1-2x)\kones+x\ktwos, (1-2x)f(\ellonearg)
    + h (1-2x)\kones+ xf(\elltwoarg)+hx\ktwos \big]\\
    &\quad + f_{uu}(\elltwoarg)\big[ (1-2x)f(\ellonearg)
    + xf(\elltwoarg), (1-2x)\kones+ x\ktwos \big]\\
    &\quad + 2 (1-2x) x f_u(\elltwoarg) f_u(\ellonearg) \kones
    + 2 x f_u^2(\elltwoarg) \big((1-2x) \kones + x\ktwos \big) \\
    &\quad + f_u(\elltwoarg) \big((1-2x) \koness + x \ktwoss \big)\big\|\\
    &\leq C_K \big(1 + \gamma_{\tK}(\theta^1)
    + \gamma_{\tK}(\theta^2) \big).
  \end{align*}
  Combining the previous equality and bound with Assumption~\ref{ass:f2_bound}
  for $\kappa = \tK$, we find that
  \begin{align*}
    &\big\|\ktwoss - (1-x)^2 F_1(\elltwoarg) - (4x-6x^2) F_2(\elltwoarg) \big\|\\
    &\leq C_K \big(1+\gamma_{\tK}(\theta^2)\big) \| f(\ellonearg) - f(\elltwoarg)\|
    + C_K \| f_t(\ellonearg) - f_t(\elltwoarg) \|
    + C_K \| f_u(\ellonearg)-f_u(\elltwoarg) \| \\
    &\quad + h  C_K \big(1 + \gamma_{\tK}(\theta^1) +
    \gamma_{\tK}(\theta^2) \big).
  \end{align*}
  Due to the bounds~\eqref{eq:f1_spatial} and \eqref{eq:f1_temporal} from Lemma~\ref{lem:implied_continuity}, it follows that
  \begin{align*}
    &\| f_t(\ellonearg) - f_t(\elltwoarg)\|
    + \| f_u(\ellonearg) - f_u(\elltwoarg)\|\\
    &\leq C_K |\theta^1 -\theta^2|^{\frac{1}{2}}
    + \tL_{\tK}(\theta^1) \|\ellone - \elltwo\| \\
    &\leq C_K |hx - h(1-x)|^{\frac{1}{2}}
    + \tL_{\tK}(\theta^1)\| hx \kone - h (1-2x) \kone + hx \ktwo \|\\
    &\leq h^{\frac{1}{2}} C_K |2x - 1|^{\frac{1}{2}}
    + h \tL_{\tK}(\theta^1) \| (3x-1) \kone - x \ktwo \|
    \leq h^{\frac{1}{2}} C_K+ h C_K\tL_{\tK}(\theta^1).
  \end{align*}
  Analogously with the bound~\eqref{eq:f0_Lip} from Lemma~\ref{lem:implied_continuity}, we find
  \begin{align*}
    \| f(\ellonearg) - f(\elltwoarg)\| \leq h C_K .
  \end{align*}
  Combining the previous bounds verifies the second claim of this lemma.
\end{proof}

Next, we note that the factor in front of $F_2$ in Lemma~\ref{lem:diff_kss_F} is
not the same as the one in \eqref{eq:goal_taylor}.
To bound their differences, we need the following lemma.

\begin{lemma}\label{lem:diff_kss_F_poly}
  Let Assumption~\ref{ass:f_high_conv} be fulfilled.
  Then for every $K >0$, there exists a constant $C_K>0$ such that
  \begin{align*}
    &\Big\| \int_{\frac{1}{2}}^1 \big(x^2 F_2(\ellonearg)
    + (1-x)^2 F_2(\elltwoarg) - 2 x^2 F_2(\ellonearg)
    - (4x-6x^2) F_2(\elltwoarg)\big) \diff{x} \Big\|
    \leq h^{\frac{1}{2}} C_K,
  \end{align*}
  is fulfilled for every $t\in[0,T]$, $h\in[0,\min(T-t,h_{\max})]$,
  $v\in\R^d$ such that $\|v\|\leq K$, where $\theta^1$, $\theta^2$, $\ellone$, $\elltwo$ are stated in
  Lemma~\ref{lem:bounds}.
\end{lemma}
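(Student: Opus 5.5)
The plan is to compare both evaluations of $F_2$ with the single frozen value $F_2(t,v)$ and then show that the polynomial weights integrate to zero. Collecting coefficients, the integrand equals
\begin{equation*}
  -x^2 F_2(\theta^1,\ell^1) + \big(6x^2-4x+(1-x)^2\big) F_2(\theta^2,\ell^2)
  = -x^2 F_2(\theta^1,\ell^1) + (7x^2-6x+1) F_2(\theta^2,\ell^2).
\end{equation*}
I then add and subtract $F_2(t,v)$ in both terms. This gives a main term
\begin{equation*}
  F_2(t,v)\int_{\frac12}^1 \big(-x^2 + 7x^2-6x+1\big)\diff{x}
  = F_2(t,v)\big[2x^3-3x^2+x\big]_{\frac12}^{1} = 0,
\end{equation*}
together with two difference terms. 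The cancellation here is the same mechanism as the third-order conditions holding in expectation (Remark~\ref{rem:order_conds2}).

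For the difference terms I use the second estimate of Lemma~\ref{lem:lipschitzF}. I take the first argument $t_1=\theta^i$ and $t_2=t$, with $K$ replaced by $\max(K,\tK)$. By Lemma~\ref{lem:bounds}, $\ell^1$, $\ell^2$ and $v$ all lie in the ball of radius $\tK$. The same lemma gives $\|\ell^1-v\| = hx\|\kone\|\le h\tK$ and $\|\ell^2 - v\|\le h(|1-2x|+x)\tK\le 2h\tK$. Moreover $|\theta^i - t|\le h$. Since the polynomial weights are bounded on $[\half,1]$, this yields
\begin{equation*}
  \|F_2(\theta^i,\ell^i)-F_2(t,v)\| \le C_K\big(h^{\frac12} + h\,\tL_{\tK}(\theta^i)\big), \quad i\in\{1,2\}.
\end{equation*}

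The main obstacle is that $\tL_{\tK}$ is only $L^2$-integrable, not bounded, so the factor $h\tL_{\tK}(\theta^i)$ cannot be bounded pointwise. This is why I put $\theta^i$, and not the fixed time $t$, into the first slot. Then the integration in $x$ averages $\tL_{\tK}$ over a short time interval. Substituting $s=t+hx$, and $s=t+h(1-x)$ respectively, and applying Cauchy--Schwarz gives
\begin{equation*}
  h\int_{\frac12}^1 \tL_{\tK}(t+hx)\diff{x} = \int_{t+\frac h2}^{t+h}\tL_{\tK}(s)\diff{s}
  \le \big(\tfrac h2\big)^{\frac12}\|\tL_{\tK}\|_{L^2(0,T)}.
\end{equation*}
The analogous bound holds over $[t,t+\frac h2]$ for $\theta^2$. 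Both intervals lie in $[0,T]$ because $h\le T-t$.

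Combining the vanishing main term with these two $O(h^{\frac12})$ estimates gives the claimed bound $h^{\frac12}C_K$. The constant depends only on $K$, through $\tK$, $C_{\tK}$ and $\|\tL_{\tK}\|_{L^2(0,T)}$.
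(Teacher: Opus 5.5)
Your proposal is correct and is essentially the paper's own proof. The paper likewise collects the integrand as $-x^2F_2(\theta^1,\ell^1)+(7x^2-6x+1)F_2(\theta^2,\ell^2)$, freezes both terms at $F_2(t,v)$ using $\int_{1/2}^1(6x^2-6x+1)\diff{x}=0$, bounds the differences via Lemma~\ref{lem:lipschitzF} with $\tL$ evaluated at $\theta^i$, and finishes with the substitutions $s=t+hx$, $s=t+h(1-x)$ and H\"older's inequality. Your explicit choice $t_1=\theta^i$, $t_2=t$ in Lemma~\ref{lem:lipschitzF} is exactly what justifies the paper's implicit use of $\tL(\theta^i)$ rather than $\tL(t)$.
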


\begin{proof}
  To bound the integral, we add and subtract $F_2(t,v)$ in suitable ways.
  Since $\theta^i$ and $\ell^i$ depend on $x$ for $i \in \{1,2\}$, we cannot apply arguments on the polynomial in $x$ in a straightforward way. Thus, exchanging $F_2(\ellonearg)$ and $F_2(\elltwoarg)$ by $F_2(t,v)$, simplifies some arguments.
  We then see that
  \begin{align*}
    &\Big\| \int_{\frac{1}{2}}^1 \big(x^2 F_2(\ellonearg) + (1-x)^2 F_2(\elltwoarg)
    - 2 x^2 F_2(\ellonearg) - (4x-6x^2) F_2(\elltwoarg)\big) \diff{x} \Big\| \\
    &\quad =\Big\| \int_{\frac{1}{2}}^1 \big(- x^2 F_2(\ellonearg)
    + (1-6x+7x^2) F_2(\elltwoarg) \big) \diff{x} \Big\|\\
    &\quad \leq \Big\| \int_{\frac{1}{2}}^1 x^2 \big(F_2(t,v) - F_2(\ellonearg)\big) \diff{x}
    +  \int_{\frac{1}{2}}^1 \big(- x^2 + 1-6x+7x^2\big) \diff{x} F_2(t,v)\\
    &\qquad + \int_{\frac{1}{2}}^1 (1-6x+7x^2)
    \big( F_2(\elltwoarg) - F_2(t,v)\big)\diff{x}\Big\| \\
    &\quad \leq \int_{\frac{1}{2}}^1 x^2 \big\|F_2(t,v) - F_2(\ellonearg)\big\| \diff{x}
    + 0 + \int_{\frac{1}{2}}^1 |1-6x+7x^2|
    \big\| F_2(\elltwoarg) - F_2(t,v)\big\| \diff{x}\\
    &\quad \leq C_K \int_{\frac{1}{2}}^1 x^2 \big(|t -\theta^1|^{\frac{1}{2}}
    + \tL_{\widehat{K}}(\theta^1) \| v - \ellone \| \big)\diff{x}
    + C \int_{\frac{1}{2}}^1  \big(|t -\theta^2|^{\frac{1}{2}}
    + \tL_{\widehat{K}}(\theta^2) \| v - \elltwo \|\big) \diff{x} \\
    &\quad \leq C_K \int_{\frac{1}{2}}^1 \big(h^{\frac{1}{2}}
    + \tL_{\widehat{K}}(\theta^1)\| hx\kone(t,h,x,\initvec) \|+ \tL_{\widehat{K}}(\theta^2)\| h(1-2x)\kone(t,h,x,v)
    +h x \ktwo(t,h,x,v) \| \big)\diff{x}\\
    &\quad \leq C_K \Big(h^{\half} + h\int_{\frac{1}{2}}^1 \big(
    \tL_{\widehat{K}}(\theta^1) + \tL_{\widehat{K}}(\theta^2) \big)\diff{x}\Big)
    = C_K \Big(h^{\half} + \int_{t}^{t + h}
    \tL_{\widehat{K}}(x) \diff{x}\Big)\\
    &\quad \leq C_K \Big(h^{\half} + h^{\frac{1}{2}} \Big(\int_{t}^{t + h}
    \tL_{\widehat{K}}^2(x) \diff{x}\Big)^{\frac{1}{2}} \Big)
    = h^{\frac{1}{2}} C_K,
  \end{align*}
  where we applied Lemma~\ref{lem:lipschitzF} and Lemma~\ref{lem:bounds},
  made use of the fact that $\int_{\half}^1(1-6x+6x^2)\diff{x}=0$, H\"older's inequality and used the substitutions $t + hx \to x$ and $t + h(1-x) \to x$.
\end{proof}

The last step for comparing the Taylor approximations of $\Phi$ and $\Psi$, is to
exchange the arguments $\ellone$ and $\elltwo$ to the arguments
$\Psi(\genvec,t,hx)$ and $\Psi(\genvec,t,h(1-x))$, respectively.
For this, we use the following result about the difference between $\ellone$ and
$\Psi(\genvec,t,hx)$ and the difference between $\elltwo$ and
$\Psi(\genvec,t,h(1-x))$.

\begin{lemma}\label{lem:ell_approx}
  Let Assumption~\ref{ass:f_high_conv} be fulfilled.
  Then for every $K>0$, there exists a constant $C_K>0$ such that
  for all $t\in[0,T]$, $h\in[0,\min(T-t,h_{\max})]$,
  $x\in[\half,1]$ and $v \in \R^d$ with $\|v\|\leq K$ it holds
  \begin{align*}
    \| \ellone - \Psi(v,t,hx)\| \leq h^{2} C_K
    \quad \text{and} \quad
    \| \elltwo - \Psi(v,t,h(1-x))\| \leq h^{2} C_K,
  \end{align*}
  where
  \begin{align*}
    \ellone:= v +hx\kone(t,h,x,v), \qquad
    \elltwo:= v +h(1-2x)\kone(t,h,x,v) + h x \ktwo(t,h,x,v).
  \end{align*}
\end{lemma}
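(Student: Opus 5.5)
We compare each stage argument with a first-order Taylor expansion of the exact flow around $h=0$; the point is that both sides agree up to terms of order $h^2$.

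\emph{Expansion of the exact flow.} For the exact flow we use \eqref{eq:def_Psi} and the fundamental theorem of calculus:
\begin{equation*}
\Psi(v,t,s) = v + s f(t,v) + \int_0^s \big(f(t+\tau,\Psi(v,t,\tau)) - f(t,v)\big)\diff{\tau}.
\end{equation*}
By \eqref{eq:boundPsi}, $\|\Psi(v,t,\tau)\|$ is bounded by a constant depending only on $K$. Applying \eqref{eq:f0_Lip} twice then gives
\begin{equation*}
\|f(t+\tau,\Psi(v,t,\tau)) - f(t,v)\| \le C_K\big(\tau + \|\Psi(v,t,\tau)-v\|\big) \le C_K \tau,
\end{equation*}
since $\|\Psi(v,t,\tau)-v\| \le \tau \sup\|f\| \le C_K\tau$. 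Hence $\|\Psi(v,t,s) - v - s f(t,v)\| \le C_K s^2$. We apply this with $s = hx$ and with $s = h(1-x)$.

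\emph{Expansion of the stages.} The stage arguments satisfy $\ellone = v + hx\kone$ and $\kone = f(\ellonearg)$. Lemma~\ref{lem:bounds} gives $\|\ellone\|\le\tK$ and $\|\ellone - v\| \le h\tK$. With \eqref{eq:f0_Lip} this yields
\begin{equation*}
\|\kone - f(t,v)\| \le C_K\big(hx + \|\ellone - v\|\big) \le C_K h.
\end{equation*}
Therefore $\|\ellone - v - hx f(t,v)\| \le C_K h^2$. Combined with the flow estimate at $s=hx$, this proves the first claim.

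For the second stage we have
\begin{equation*}
\elltwo - v = h(1-2x)\kone + hx\ktwo = h(1-x) f(t,v) + h(1-2x)\big(\kone - f(t,v)\big) + hx\big(\ktwo - f(t,v)\big).
\end{equation*}
Since $\ktwo = f(\elltwoarg)$ and $\|\elltwo - v\| \le C_K h$, the same Lipschitz argument gives $\|\ktwo - f(t,v)\| \le C_K h$. Using $|1-2x|\le 1$ and $x\le 1$, we get $\|\elltwo - v - h(1-x)f(t,v)\| \le C_K h^2$. Combined with the flow estimate at $s=h(1-x)$, this proves the second claim.

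\emph{Main point of care.} The only thing to check is that every argument fed into \eqref{eq:f0_Lip} lies in a ball whose radius depends only on $K$. This holds: the flow values are controlled by \eqref{eq:boundPsi} together with $h\le T$, and the stage arguments and stage values by Lemma~\ref{lem:bounds}. So all constants depend only on $K$, and no second derivatives of $f$ are needed.
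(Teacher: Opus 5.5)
Your proof is correct and uses the same ingredients as the paper's proof: the local Lipschitz bound \eqref{eq:f0_Lip}, the a priori bounds of Lemma~\ref{lem:bounds}, and \eqref{eq:boundPsi}. It differs only in the intermediate quantity: you compare both $\ell^i$ and the flow with the explicit Euler step $v+sf(t,v)$, whereas the paper first proves the first-order bounds $\|\ell^i-\Psi(v,t,s)\|\le hC_K$ (and $\|\ell^1-\ell^2\|\le hC_K$) and then inserts them into the integral representation of $\ell^i-\Psi(v,t,\cdot)$, a step your pivot makes unnecessary.
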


\begin{proof}
  We begin to show some first-order estimates that we can then later improve with
  a bootstrap argument.
  After inserting the definitions of $\ellone$ and $\elltwo$, we observe that
  \begin{align*}
    \|\ellone - \elltwo\|
    &= h \| x\kone(t,h,x,v) - (1-2x)\kone(t,h,x,v) - x \ktwo(t,h,x,v) \|
    \leq h C_K
  \end{align*}
  due to Lemma~\ref{lem:bounds}.
  Similarly with the definition of $\Psi$ from \eqref{eq:def_Psi} and its bound
  \eqref{eq:boundPsi}, we can apply the same lemma as well as
  the bound~\eqref{eq:f0_Lip} to obtain
  \begin{align*}
    \|\ellone - \Psi(v,t,s)\|
    &= \Big\| h x \kone(t,h,x,v) - \int_{0}^{s} f(t+ \tau, \Psi(v,t,\tau))
    \diff{\tau} \Big\|
    \leq h C_K
  \end{align*}
  for any $s\in[0,h]$.
  Applying the triangle inequality, we analogously find that
  $\|\elltwo - \Psi(v,t,s)\| \leq h C_K$.

  After these preparations, we show the first claim of the lemma.
  Inserting the Lipschitz condition on $f$ from the bound~\eqref{eq:f0_Lip} in Lemma~\ref{lem:implied_continuity} and the first order bound of the difference of $\ellone$ and $\Psi(v,t,hx)$, it
  follows that
  \begin{align*}
    \| \ellone - \Psi(v,t,hx) \|
    &\leq \int_{0}^{hx} \big\| f(t+hx, \ellone)
    - f(t+ \tau, \Psi(v,t,\tau))\big\| \diff{\tau}\\
    &\leq C_K \int_{0}^{hx} \big(| hx-\tau|
    + \| \ellone - \Psi(v,t,\tau) \|\big) \diff{\tau}
    \leq h^2 C_K.
  \end{align*}
  Analogously, we can argue for $\elltwo$ that
  \begin{align*}
    &\| \elltwo - \Psi(v,t,h(1-x)) \| \\
    &= \Big\| h(1-2x) f(t+hx, \ellone) + hx f\big(t+h (1-x), \elltwo\big)
    - \int_{0}^{h(1-x)} f(t+ \tau, \Psi(v,t,\tau)) \diff{\tau} \Big\| \\
    &\leq h(1-2x) \big\|f(t+hx, \ellone) - f\big(t+h (1-x), \elltwo\big)\big\|\\
    &\quad + \int_{0}^{h(1-x)} \big\| f\big(t+h (1-x), \elltwo\big)
    - f(t+ \tau, \Psi(v,t,\tau)) \big\| \diff{\tau} \\
    &\leq h C_K \big(h |2x-1| + \|\ellone - \elltwo\| \big)
    + C_K \int_{0}^{h(1-x)} \big(|h (1-x) - \tau|
    + \big\| \elltwo-\Psi(v,t,\tau) \big\| \big) \diff{\tau}
    \leq h^2 C_K.
  \end{align*}
\end{proof}

Now we combine the previous lemmas to get the following result.

\begin{lemma}\label{lem:conc_taylor}
  Let Assumption~\ref{ass:f_high_conv} be fulfilled.
  Then for every $K>0$, there exist a constant $C_K>0$ such that
  \begin{align*}
    &\Big\|\int_{\half}^1 \big(x^2F(\theta^1, v(\theta^1))
    + (1-x)^2F(\theta^2, v(\theta^2)) - \koness(t,h,x,\initvec)
    - \ktwoss(t,h,x,\initvec)\big)\diff{x} \Big\|\\
    &\leq  h^{\half} C_K + h C_K \Big(1 + \int_{\half}^1 \big(
    \tL_{\widehat{K}}(\theta^1)+\gamma_{\widehat{K}}(\theta^1) + \gamma_{\widehat{K}}(\theta^2)\big)\diff{x}\Big)\\
    &\quad+ h^{2\sigma} C_K \int_{\half}^1 \big(x^2(L_{\widehat{K}}(\theta^1)
    + \gamma_{\widehat{K}}(\theta^1))+(1-x)^2
    (L_{\widehat{K}}(\theta^2)+\gamma_{\widehat{K}}(\theta^2))\big)\diff{x}.
  \end{align*}
  with the abbreviations
  \begin{align*}
    &\theta^1:=t+hx ,&& \theta^2:= t+h(1-x),\\
    &v(\theta^1):=\Psi(v,t,hx) ,&& v(\theta^2):= \Psi(v,t,h (1-x)),\\
    &\ellone:= v +hx\kone(t,h,x,v),
    && \elltwo:= v +h(1-2x)\kone(t,h,x,v) +h x \ktwo(t,h,x,v),\\
    & \max(\|v(\theta^1)\|, \|\ellone\|, \|v(\theta^2)\|,
    \|\elltwo\|) \leq \widehat{K} &&\text{for all } x \in [\tfrac{1}{2},1] \text{ and } h \leq h_{\mathrm{max}},
  \end{align*}
  is fulfilled for every $t\in[0,T]$, $n\in \{1,\dots,N\}$,
  $h\in[0,\min(T-t,h_{\max})]$, and $v\in\R^d$ with $\|v\|\leq K$.
\end{lemma}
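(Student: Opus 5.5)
The plan is a telescoping decomposition of the integrand into three groups, each handled by one of the preceding lemmas. Since $F=F_1+F_2$, I write
\begin{align*}
  &x^2F(\theta^1, v(\theta^1)) + (1-x)^2F(\theta^2, v(\theta^2)) - \koness - \ktwoss\\
  &= \big[x^2F_1(\theta^1,v(\theta^1)) - x^2F_1(\ellonearg)\big]
  + \big[(1-x)^2F_1(\theta^2,v(\theta^2)) - (1-x)^2F_1(\elltwoarg)\big]\\
  &\quad + \big[x^2F_2(\theta^1,v(\theta^1)) - x^2F_2(\ellonearg)\big]
  + \big[(1-x)^2F_2(\theta^2,v(\theta^2)) - (1-x)^2F_2(\elltwoarg)\big]\\
  &\quad + \big[x^2F_2(\ellonearg) + (1-x)^2F_2(\elltwoarg) - 2x^2F_2(\ellonearg) - (4x-6x^2)F_2(\elltwoarg)\big]\\
  &\quad - \big[\koness - x^2F_1(\ellonearg) - 2x^2F_2(\ellonearg)\big]
  - \big[\ktwoss - (1-x)^2F_1(\elltwoarg) - (4x-6x^2)F_2(\elltwoarg)\big].
\end{align*}
I then integrate over $x\in[\half,1]$ and apply the triangle inequality. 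The norm stays inside the integral for every group except the fifth, which must be kept as an integral of a vector so that its cancellation is not lost.

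\textbf{The $F_1$ differences.} I apply the first estimate of Lemma~\ref{lem:lipschitzF} with $K=\widehat K$. This is legitimate because of the standing bound on $v(\theta^i)$ and $\ell^i$ in the statement; it also requires $\tK\le\widehat K$, which can be arranged by enlarging $\widehat K$. Lemma~\ref{lem:ell_approx} gives $\|\ell^i-v(\theta^i)\|\le h^2C_K$. The first bracket is therefore at most
\[
x^2C_K\big(L_{\widehat K}(\theta^1)h^{2\sigma}+\gamma_{\widehat K}(\theta^1)h^2\big),
\]
and the second bracket gives the analogous bound with $(1-x)^2$ and $\theta^2$. Since $h^2\le h^{2\sigma}h_{\max}^{2-2\sigma}$, the $\gamma$ part is absorbed into $h^{2\sigma}\gamma$, which yields the last line of the claimed bound.

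\textbf{The $F_2$ differences.} Here the time arguments coincide, so the second estimate of Lemma~\ref{lem:lipschitzF} gives a bound of order $C_K\tL_{\widehat K}(\theta^i)h^2$. This is dominated by $hC_K\tL_{\widehat K}(\theta^i)$. However, the target bound contains only $\tL(\theta^1)$, not $\tL(\theta^2)$. I handle this by noting that the substitution used in Lemma~\ref{lem:diff_kss_F_poly} gives
\[
\int_{\half}^1 \tL(\theta^2)\diff{x}=\frac1h\int_t^{t+h/2}\tL,
\]
and then bounding it through Hölder's inequality by $h^{-1/2}C$. Multiplied by $h^2$, this contributes $h^{3/2}C\le h^{1/2}C$, so it can be absorbed into the $h^{\half}C_K$ term. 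The same trick covers the $\tL(\theta^2)$ contribution appearing in the next group.

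\textbf{Remaining groups.} The fifth group is exactly the integrand of Lemma~\ref{lem:diff_kss_F_poly}, so its integral is bounded by $h^{\half}C_K$. The last two groups are bounded pointwise by Lemma~\ref{lem:diff_kss_F} as
\[
hC_K\gamma_{\tK}(\theta^1) \quad\text{and}\quad h^{\half}C_K+hC_K\big(1+\tL_{\tK}(\theta^1)+\gamma_{\tK}(\theta^1)+\gamma_{\tK}(\theta^2)\big),
\]
respectively. Integrating these in $x$ gives the middle line of the claimed estimate.

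The main obstacle is bookkeeping rather than any new idea. One point is the consistency of the radii $\tK$ versus $\widehat K$, which requires monotone choices of $L_\kappa$ and $\gamma_\kappa$ in $\kappa$; these can always be replaced by their pointwise maxima over $\kappa'\le\kappa$. The other point is placing the norm correctly, outside the integral only for the cancelling polynomial term.
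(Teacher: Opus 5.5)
Your proposal is correct and follows the paper's argument. The paper splits the integrand into the same four pieces: the $F$-differences between $v(\theta^i)$ and $\ell^i$, handled by Lemma~\ref{lem:lipschitzF} and Lemma~\ref{lem:ell_approx}; the cancelling $F_2$-polynomial term, handled by Lemma~\ref{lem:diff_kss_F_poly} with the norm kept outside the $x$-integral; and the two stage remainders, handled by Lemma~\ref{lem:diff_kss_F}. Your treatment of the $F_2$-differences is more careful than the paper's, which simply bounds that piece by $hC_K$. You keep the factor $\tL_{\widehat K}(\theta^i)$ from Lemma~\ref{lem:lipschitzF}, absorb the $\tL_{\widehat K}(\theta^2)$ contribution into the $h^{\half}C_K$ term via H\"older's inequality, and explicitly reconcile the radii $\tK$ and $\widehat K$.
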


\begin{proof}
  We begin by splitting up the term into the following four parts
  \begin{align*}
    &\Big\|\int_{\half}^1 \big(x^2F(\theta^1, v(\theta^1))
    + (1-x)^2F(\theta^2, v(\theta^2)) - \koness(t,h,x,\initvec)
    - \ktwoss(t,h,x,\initvec)\big)\diff{x} \Big\|\\
    &\leq \Big\|\int_{\half}^1 \big(x^2F(\theta^1, v(\theta^1))
    + (1-x)^2F(\theta^2, v(\theta^2)) - x^2 F(\theta^1, \ellone)
    - (1-x)^2F(\theta^2, \elltwo)\big)\diff{x}\Big\|\\
    &\quad + \Big\| \int_{\half}^1 \big(x^2F_2(\theta^1, \ellone)
    + (1-x)^2F_2(\theta^2, \elltwo) - 2x^2F_2(\theta^1, \ellone)
    - (4x-6x^2)F_2(\theta^2, \elltwo)\big)\diff{x} \Big\|\\
    &\quad + \Big\| \int_{\half}^1 \big( x^2F_1(\theta^1, \ellone)
    + 2x^2F_2(\theta^1, \ellone) - \koness(t,h,x,\initvec) \big)\diff{x}\Big\|\\
    &\quad + \Big\| \int_{\half}^1 \big( (1-x)^2F_1(\theta^2, \elltwo)
    + (4x-6x^2)F_2(\theta^2, \elltwo) - \ktwoss(t,h,x,\initvec) \big)\diff{x} \Big\|\\
    &= \Gamma_1 + \Gamma_2 + \Gamma_3 + \Gamma_4.
  \end{align*}
  For $\Gamma_1$, we apply the H\"older and Lipschitz continuity result from
  Lemma~\ref{lem:lipschitzF} and the fact that the arguments can be bounded as
  shown in Lemma~\ref{lem:ell_approx}. It then follows that
  \begin{align*}
    \Gamma_1
    &= \Big\|\int_{\half}^1 \big(x^2F(\theta^1, v(\theta^1))
    + (1-x)^2F(\theta^2, v(\theta^2)) - x^2F(\theta^1, \ellone)
    - (1-x)^2F(\theta^2, \elltwo)\big)\diff{x}\Big\| \\
    &\leq \int_{\half}^1 x^2 \big\| F_1(\theta^1, v(\theta^1))
    - F_1(\theta^1, \ellone)\big\| \diff{x}
    + \int_{\half}^1 (1-x)^2 \big\| F_1(\theta^2, v(\theta^2))
    - F_1(\theta^2, \elltwo)\big\| \diff{x}\\
    &\quad + \int_{\half}^1 x^2 \big\| F_2(\theta^1, v(\theta^1))
    - F_2(\theta^1, \ellone)\big\| \diff{x}
    + \int_{\half}^1 (1-x)^2 \big\| F_2(\theta^2, v(\theta^2))
    - F_2(\theta^2, \elltwo)\big\| \diff{x}\\
    &\leq C_K \int_{\half}^1 x^2 \big(L_{\widehat{K}}(\theta^1)
    \| v(\theta^1) - \ellone \|^{\sigma}
    + \gamma_{\widehat{K}}(\theta^1) \| v(\theta^1) - \ellone \| \big)\diff{x}\\
    &\quad + C_K \int_{\half}^1 (1-x)^2 \big(L_{\widehat{K}}(\theta^2)
    \| v(\theta^2) - \elltwo \|^{\sigma} + \gamma_{\widehat{K}}(\theta^2)
    \| v(\theta^2) - \elltwo \| \big)\diff{x} \\
    &\quad + C_K \int_{\half}^1 \big(x^2 \|v(\theta^1) - \ellone\|^{\frac{1}{2}}
    + (1-x)^2 \| v(\theta^2) - \elltwo \|^{\frac{1}{2}} \big) \diff{x}\\
    &\leq h C_K + h^{2\sigma} C_K \int_{\half}^1
    \big(x^2(L_{\widehat{K}}(\theta^1) + \gamma_{\widehat{K}}(\theta^1))
    +(1-x)^2(L_{\widehat{K}}(\theta^2)+\gamma_{\widehat{K}}(\theta^2))\big)\diff{x},
  \end{align*}
  where $\widehat{K} \geq \max(\|v(\theta^1)\|, \|\ellone\|, \|v(\theta^2)\|,
  \|\elltwo\| )$.
  The second term $\Gamma_2$ can then be bounded with
  Lemma~\ref{lem:diff_kss_F_poly}. We obtain that
  \begin{equation*}
    \Gamma_2
    =\Big\| \int_{\half}^1 \big(x^2F_2(\theta^1, \ellone)
    + (1-x)^2F_2(\theta^2,\elltwo) - 2x^2F_2(\theta^1, \ellone)
    - (4x-6x^2)F_2(\theta^2, \elltwo)\big)\diff{x} \Big\|
    \leq h^{\frac{1}{2}} C_K.
  \end{equation*}
  For $\Gamma_3$ and $\Gamma_4$, we can directly apply
  Lemma~\ref{lem:diff_kss_F}, to find
  \begin{align*}
    \Gamma_3
    = \Big\| \int_{\half}^1 \big( x^2F_1(\theta^1, \ellone)
    + 2x^2 F_2(\theta^1, \ellone) - \koness(t,h,x,\initvec) \big)\diff{x} \Big\|
    \leq h C_K \int_{\half}^1 \gamma_{\widehat{K}}(\theta^1)\diff{x}
  \end{align*}
  and
  \begin{align*}
    \Gamma_4 &= \Big\| \int_{\half}^1 \big( (1-x)^2F_1(\theta^2, \elltwo) +
    (4x-6x^2)F_2(\theta^2, \elltwo) - \ktwoss(t,h,x,\initvec) \big) \diff{x} \Big\|\\
    &\leq h^{\frac{1}{2}} C_K + h C_K \Big(1 + \int_{\half}^1
    \big(\tL_{\tK}(\theta^1)+\gamma_{\widehat{K}}(\theta^1)
    + \gamma_{\widehat{K}}(\theta^2)\big) \diff{x}\Big).
  \end{align*}
  Thus, combining the bounds for $\Gamma_1$--$\Gamma_4$, we obtain the claimed
  result
  \begin{align*}
    &\Big\|\int_{\half}^1 \big(x^2F(\theta^1, v(\theta^1))
    + (1-x)^2F(\theta^2, v(\theta^2)) - \koness(t,h,x,\initvec)
    - \ktwoss(t,h,x,\initvec)\big)\diff{x} \Big\|\\
    &\quad \leq  h^{\half} C_K + h C_K \Big(1 + \int_{\half}^1
    \big(\tL_{\tK}(\theta^1)+\gamma_{\widehat{K}}(\theta^1)
    + \gamma_{\widehat{K}}(\theta^2)\big) \diff{x}\Big)\\
    &\qquad+ h^{2\sigma} C_K \int_{\half}^1 \big(x^2(L_{\widehat{K}}(\theta^1)
    + \gamma_{\widehat{K}}(\theta^1))+(1-x)^2(L_{\widehat{K}}(\theta^2)
    +\gamma_{\widehat{K}}(\theta^2))\big)\diff{x}.
  \end{align*}
\end{proof}

\section{Convergence of the randomized Runge--Kutta method}
\label{sec:convergence}
In this section, we provide the main theoretical result of this work.
We prove an error bound for the randomized RK method \eqref{scheme:SDIRK}.
More precisely, we show that the global error denoted by
\begin{equation}\label{eq:def_global_error}
  e_{n}=u(t_{n}) - u^{n}, \quad n \in \{1,\dots,N\},
\end{equation}
converges to zero in the limit $h \to 0$ with respect to the root-mean-square
norm and determine the order of convergence.

First note that the method is B-convergent of order $1$. This will later help to
avoid having the Lipschitz constant of $f$ inside of an exponential.

\begin{lemma}[B-convergence]\label{lem:B_conv}
	Let Assumptions~\ref{ass:f_high_conv} and \ref{ass:rand_var} be fulfilled.
	For scheme \eqref{scheme:SDIRK}, there exists a constant $C_3 >0$ that only depends on the Butcher tableau, $T$, $\nu$ and $h_{max}$ such that
	\begin{align*}
		\E\big[\|e_{n+1}\|^2\big]
		\leq h^2 C_3 \max_{t\in[0,T]}\|u_{tt}(t)\|^2
	\end{align*}
	for all $n \in \{0,1,\dots,N-1\}$ and $h = \frac{T}{N}$ with $h \in (0,h_{\max}]$.
\end{lemma}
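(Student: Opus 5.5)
The plan is to use the standard B-convergence argument: split the error into a stability part and a local-defect part. The defect will be bounded pathwise using only the stage order one of the scheme and $u_{tt}$. This keeps the Lipschitz constant of $f$ out of the constants; only $\nu$ and the tableau enter.

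\textbf{Perturbed stages.} Fix $n$ and a realization $x=\xi_{n+1}(\omega)\in[\half,1]$. Introduce the perturbed stage values $U_1=u(t_n+hx)$ and $U_2=u(t_n+h(1-x))$. These satisfy
\begin{equation*}
  U_1 = u(t_n) + hx\, u'(t_n+hx) + \delta_1,\qquad
  U_2 = u(t_n) + h(1-2x)\, u'(t_n+hx) + hx\, u'(t_n+h(1-x)) + \delta_2,
\end{equation*}
together with $u(t_{n+1}) = u(t_n) + \frac h2\big(u'(t_n+hx)+u'(t_n+h(1-x))\big)+\delta_3$. Here $u'(\theta)=f(\theta,u(\theta))$, so these are exactly the stage equations of \eqref{scheme:det_fam_SDIRK} with defects $\delta_i$.

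\textbf{Size of the defects.} Since $c_i=\sum_j a_{ij}$ and $\sum b_i=1$ (Remark~\ref{rem:order_conds}), a first-order Taylor expansion with integral remainder gives $\|\delta_i\|\le C h^2\max_{t}\|u_{tt}(t)\|$. The constant depends only on the tableau, and this holds uniformly in $x\in[\half,1]$. By Lemma~\ref{lem:implied_continuity}, $u_{tt}$ is continuous, so the maximum is finite.

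\textbf{Stability estimate.} Compare the numerical stages $\ell^1,\ell^2$ with $U_1,U_2$, using the one-sided Lipschitz condition in the way \cite[Theorem~IV.14.6 / Proposition~IV.15.2]{Hairer1996} do for algebraically stable methods with invertible $A$. Here $A$ is lower triangular with diagonal $x\ge\half$, so $A$ is invertible and $\|A^{-1}\|$ is bounded uniformly in $x$. Lemma~\ref{lemma:choice_a_SDIRK} supplies positive semidefiniteness of $\mathbf M$. The perturbed version of the argument behind Lemma~\ref{lem:alg_bound} should then yield
\begin{equation*}
  \|e_{n+1}\| \le (1+hC\nuplus)\|e_n\| + C\big(\|\delta_3\| + \|\delta_1\|+\|\delta_2\|\big).
\end{equation*}
The constant here depends only on the tableau, $\nu$ and $h_{\max}$, through $(1-h_{\max}\nuplus)^{-1}$. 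It is important that $\delta_1,\delta_2$ enter multiplied by an $O(1)$ factor and not by the Lipschitz constant. This is achieved by writing $h(f(U)-f(\ell)) = A^{-1}$-combinations of stage differences, so that no Lipschitz bound on $f$ is ever applied.

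\textbf{Main obstacle and conclusion.} The main obstacle is this perturbed stability step: carrying out the Hairer--Wanner energy estimate with $\mathbf M\succeq0$ and the inverse of $A$, uniformly in $x$, while keeping track of the stage defects. With the one-step bound $\|e_{n+1}\|\le(1+hC\nuplus)\|e_n\|+Ch^2\max\|u_{tt}\|$ holding pathwise, a discrete Gronwall argument with $e_0=0$ gives $\|e_{n+1}\|\le C h\max_t\|u_{tt}(t)\|$ for every $\omega$. Squaring and taking expectations then yields the claim with $C_3$ depending only on the tableau, $T$, $\nu$ and $h_{\max}$.
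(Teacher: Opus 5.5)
Your proposal is correct and follows essentially the same route as the paper. The paper's proof simply invokes the B-convergence theorem \cite[Theorem~IV.15.3]{Hairer1996} with stage order $q=1$ and observes that the error constant is uniform in $\xi_n\in[\half,1]$; your sketch spells out that argument (stage defects of size $h^2\max_t\|u_{tt}(t)\|$, algebraic stability from Lemma~\ref{lemma:choice_a_SDIRK}, uniformly bounded $A^{-1}$, a pathwise discrete Gronwall estimate, then taking expectations).
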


\begin{proof}
	The proof is almost the same as \cite[Theorem IV.15.3]{Hairer1996} with $q=1$, where one
	only needs to verify that the error constant can be uniformly bounded w.r.t.
	$\xi_n$.
\end{proof}

For the error analysis, we first derive estimates for the global error in terms of the local error
\begin{equation}\label{eq:def_local_error}
  \varrho_{n+1}=\Psi(u^n,t_n,h) - u^{n+1},
  \quad n = \{0,\dots,N-1\},
\end{equation}
where $\Psi$ is the exact flow of the ODE defined in \eqref{eq:def_Psi}.

\begin{lemma}\label{lem:error_bound}
	Let Assumptions~\ref{ass:f_high_conv} and \ref{ass:rand_var} be fulfilled.
	For scheme \eqref{scheme:SDIRK}, there exists a constant $C_2 >0$ that only depends on $\nu$ and $h_{\max}$ such that
	\begin{align*}
		\E\big[\|e_{n+1}\|^2\big]
		\leq \big( 1 + h C_2 + h^2 C \mathrm{e}^{2 \nuplus h} \big)
		\E \big[\|e_n\|^2 \big] + 2 \E \big[\|\varrho_{n+1}\|^2\big]
		+ h^{-1} \E\big[\big\|\E\big[\varrho_{n+1}\vert \mathcal{F}_n\big]\big\|^2 \big]
	\end{align*}
	for all $n = 0,\dots,N-1$ and $h = \frac{T}{N}$ with $h \in (0,h_{\max}]$.
	Moreover, the constant $C >0$ depends on $M_{\mathrm{exact}}$ and
  $M_{\mathrm{apriori}}$, which are stated in Lemma~\ref{lem:uexist}
  and Lemma~\ref{lem:apriori_bound}.
\end{lemma}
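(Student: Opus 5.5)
Write $e_{n+1} = u(t_{n+1}) - u^{n+1}$ and split it as
\begin{align*}
  e_{n+1} = \big(\Psi(u(t_n),t_n,h) - \Psi(u^n,t_n,h)\big) + \varrho_{n+1} =: D_n + \varrho_{n+1},
\end{align*}
using $\Psi(u(t_n),t_n,h)=u(t_{n+1})$. Expanding the square gives
\begin{align*}
  \|e_{n+1}\|^2 = \|D_n\|^2 + 2\inner{D_n}{\varrho_{n+1}} + \|\varrho_{n+1}\|^2 .
\end{align*}
The point of the splitting is that $D_n$ is $\F_n$-measurable, because $\Psi$ is deterministic and $u^n$ is $\F_n$-adapted. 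Hence $\E[\inner{D_n}{\varrho_{n+1}}] = \E[\inner{D_n}{\E[\varrho_{n+1}|\F_n]}]$. Young's inequality with weight $h$ then gives
\begin{align*}
  2\,\E[\inner{D_n}{\varrho_{n+1}}] \le h\,\E\|D_n\|^2 + h^{-1}\E\big\|\E[\varrho_{n+1}|\F_n]\big\|^2 .
\end{align*}
This produces the $h^{-1}$ term in the statement. The leftover $\|\varrho_{n+1}\|^2$ is bounded trivially by $2\|\varrho_{n+1}\|^2$, and the extra factor leaves room if one prefers to write $e_{n+1}=\Psi(u(t_n))-\Phi(u(t_n))+\dots$ instead.

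It remains to show $\E\|D_n\|^2 \le (1+hC_2' + h^2 C\mathrm{e}^{2\nuplus h})\E\|e_n\|^2$, since then $(1+h)$ times this is of the required form. The exact flow under the one-sided Lipschitz condition (\cite[Lemma~IV.12.1]{Hairer1996}, the same estimate behind \eqref{eq:boundPsi}) gives
\begin{align*}
  \|\Psi(v,t,h)-\Psi(w,t,h)\| \le \mathrm{e}^{\nu h}\|v-w\| \le \mathrm{e}^{\nuplus h}\|v-w\| ,
\end{align*}
so $\|D_n\|^2\le \mathrm{e}^{2\nuplus h}\|e_n\|^2$. Writing $\mathrm{e}^{2\nuplus h}\le 1+2\nuplus h\,\mathrm{e}^{2\nuplus h_{\max}}$ yields a constant $C_2$ depending only on $\nu$ and $h_{\max}$, and it contains no Lipschitz constant of $f$. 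Multiplying by $(1+h)$ gives $(1+hC_2 + h^2 C_2\mathrm{e}^{2\nuplus h})$, up to renaming constants. The $h^2$ term is where the statement permits a constant $C$ depending on $M_{\mathrm{exact}}$ and $M_{\mathrm{apriori}}$.

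\textbf{Main obstacle.} The $h^{-1}$ term is harmless because the expected local error $\E[\varrho_{n+1}|\F_n]$ will later be $O(h^3)$. The real difficulty is making sure no Lipschitz constant of $f$ enters $C_2$, which forces the contraction-type estimate for $\Psi$ rather than a Gronwall argument with a local Lipschitz bound.

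If one instead compares through $\Phi$, i.e.\ uses $\Phi(t_n,h,\xi_{n+1},u(t_n))-\Phi(t_n,h,\xi_{n+1},u^n)$ with Proposition~\ref{thm:randomized_stability}, then $\varrho$ must be replaced by the local error at $u(t_n)$. Converting that local error back to $\varrho_{n+1}$ at $u^n$ requires bounding differences of local errors, and local-Lipschitz bounds on the ball of radius $\max(M_{\mathrm{exact}},M_{\mathrm{apriori}})$ via Lemma~\ref{lem:apriori_bound} and Lemma~\ref{lem:uexist}. This would be the fallback route, with those constants absorbed into the $h^2C$ term.
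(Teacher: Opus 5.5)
Your proof is correct, but it pairs the local error with a different quantity than the paper does.

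The paper writes $\|e_{n+1}\|^2=\|e_{n+1}-\varrho_{n+1}\|^2+2\inner{e_{n+1}}{\varrho_{n+1}}-\|\varrho_{n+1}\|^2$ and applies the conditional-expectation/Young step to $2\inner{e_n}{\varrho_{n+1}}$. In your notation this leaves the cross term $2\inner{D_n-e_n}{\varrho_{n+1}}$, which the paper bounds by $\|D_n-e_n\|^2+\|\varrho_{n+1}\|^2$. The difference
$D_n-e_n=\int_0^h\big(f(t_n+\tau,u(t_n+\tau))-f(t_n+\tau,\Psi(u^n,t_n,\tau))\big)\diff{\tau}$
is then estimated with the local Lipschitz bound \eqref{eq:f0_Lip} on a ball fixed by $M_{\mathrm{exact}}$ and $M_{\mathrm{apriori}}$. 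That step is the only source of the $h^2C\mathrm{e}^{2\nuplus h}$ term and of the factor $2$ in front of $\E[\|\varrho_{n+1}\|^2]$.

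You instead apply the conditional-expectation argument directly to $2\inner{D_n}{\varrho_{n+1}}$. This is legitimate: $\Psi(\cdot,t_n,h)$ is continuous by the same contractivity estimate, so $D_n$ is $\F_n$-measurable, and all quantities are bounded by Lemma~\ref{lem:apriori_bound} and \eqref{eq:boundPsi}. You obtain the sharper recursion
\begin{equation*}
  \E\big[\|e_{n+1}\|^2\big]\le (1+h)\mathrm{e}^{2\nuplus h}\,\E\big[\|e_n\|^2\big]+\E\big[\|\varrho_{n+1}\|^2\big]+h^{-1}\E\big[\big\|\E[\varrho_{n+1}\vert\F_n]\big\|^2\big],
\end{equation*}
with $(1+h)\mathrm{e}^{2\nuplus h}\le 1+h(1+2\nuplus)\mathrm{e}^{2\nuplus h_{\max}}$. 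The stated inequality follows at once, and no Lipschitz constant of $f$ appears at all.

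This is a real gain. With your recursion, the discrete Gr\"onwall argument in Theorem~\ref{thm:conv} closes directly with the factor $\exp(C_2T)$. The bootstrap through the B-convergence Lemma~\ref{lem:B_conv}, whose only role is to keep the Lipschitz-dependent $h^2C$ term out of the exponential, becomes superfluous. As far as I can see, the paper's splitting offers no advantage in this setting. The fallback route via $\Phi$ and Proposition~\ref{thm:randomized_stability} sketched at the end of your proposal is not needed.
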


\begin{proof}
	Using the fact that $\|a\|^2= \|a- b\|^2 + 2\inner{a}{b} - \|b\|^2$, we obtain
\begin{align*}
	\|e_{n+1}\|^2 - \|e_{n}\|^2
	&= \|e_{n+1} - \varrho_{n+1} \|^2 + 2
	\inner{e_{n+1}}{\varrho_{n+1}} -
	\|\varrho_{n+1}\|^2 - \|e_{n}\|^2\\
	&= \big(\|e_{n+1} - \varrho_{n+1} \|^2  - \|e_{n}\|^2 \big)
	+ \big(2 \inner{e_{n+1} - e_n}{\varrho_{n+1}} -
	\|\varrho_{n+1}\|^2\big)
	+ 2 \inner{e_{n}}{\varrho_{n+1}}\\
	&=: \Gamma_1 + \Gamma_2 + \Gamma_3.
\end{align*}
We bound $\Gamma_1$ by using \cite[Lemma~IV.12.1]{Hairer1996}
\begin{equation*}
	\Gamma_1
	=\|e_{n+1}-\varrho_{n+1}\|^2-\|e_n\|^2
	=\|u(t_{n+1})-\Psi(u^n,t_n,h)\|^2 - \|e_{n}\|^2
	\leq  (\mathrm{e}^{2\nuplus h}-1)\|e_{n}\|^2.
\end{equation*}
For $\Gamma_2$, we have
\begin{align*}
	\E[\Gamma_2]
  &= 2 \E\big[
  \inner{e_{n+1} - e_n}{\varrho_{n+1}}\big]
  - \E \big[ \|\varrho_{n+1}\|^2\big]\\
	&= 2\E \big[\inner{u(t_{n+1})- u^{n+1} - u(t_n) +u^n
  +\Psi(u^n,t_n,h) -\Psi(u^n,t_n,h)}{\varrho_{n+1}}\big]
  - \E \big[ \|\varrho_{n+1}\|^2\big]\\
  &= 2\E \big[\inner{u(t_{n+1})-u(t_n)-\Psi(u^n,t_n,h)+u^n +\varrho_{n+1}}{\varrho_{n+1}}\big]
  - \E \big[ \|\varrho_{n+1}\|^2\big]\\
  &= 2\E\big[\inner{u(t_{n+1})-u(t_n)-\Psi(u^n,t_n,h)+u^n}{\varrho_{n+1}}\big]
  + \E \big[ \|\varrho_{n+1}\|^2\big]\\
	&\leq \E \big[\|u(t_{n+1})-u(t_n)-\Psi(u^n,t_n,h)+u^n \|^2 \big]
	+ 2 \E \big[\|\varrho_{n+1}\|^2\big]\\
	&= \Gamma_{2,1} + 2 \E \big[\|\varrho_{n+1}\|^2\big].
\end{align*}
To find a bound for $\Gamma_{2,1}$, we insert the ODE, apply H\"older's
inequality, the bound \eqref{eq:f0_Lip} from Lemma~\ref{lem:implied_continuity}, and finally
\cite[IV.Lemma~12.1]{Hairer1996} to find
\begin{align*}
	\Gamma_{2,1}
	&= \E \Big[\Big\|\int_0^h f(t_n+\tau,u(t_n+s))
	-f(t_n+\tau,\Psi(u^n,t_n,\tau))\diff{\tau} \Big\|^2 \Big] \\
	&\leq h \int_0^h \E \big[ \|f(t_n+\tau,u(t_n+\tau))
	-f(t_n+\tau,\Psi(u^n,t_n,\tau))\|^2  \big]\diff{\tau} \\
	&\leq h C \int_0^h \E \big[ \|u(t_n+\tau)-\Psi(u^n,t_n,\tau)\|^2\big]
	\diff{\tau} \\
	&\leq h C \int_0^h \mathrm{e}^{2 \nu \tau} \E \big[\|e_n\|^2 \big] \diff{\tau}
	\leq h^2 C \mathrm{e}^{2 \nuplus h} \E \big[\|e_n\|^2 \big].
\end{align*}
for a constant $C >0$ depending on $M_{\mathrm{apriori}}$ and $M_{\mathrm{exact}}$,
which are stated in Lemma~\ref{lem:apriori_bound} and Lemma~\ref{lem:uexist}.
Thus, it follows that
\begin{align*}
	\E[\Gamma_2]
	\leq h^2 C \mathrm{e}^{2 \nuplus h} \E \big[\|e_n\|^2 \big]
	+ 2 \E \big[\|\varrho_{n+1}\|^2\big].
\end{align*}
Applying the $L^2$-projection of the conditional expectation with $\mathcal{F}_n$ and that $e_n$ is $\mathcal{F}_n$-measurable, we obtain
\begin{align*}
	\E\big[\Gamma_3\big]
	= 2\E\big[\inner{e_n}{\varrho_{n+1}}\big]
	= 2\E \big[\inner{e_n}{\E\big[\varrho_{n+1}\vert \mathcal{F}_n\big]}\big]
	\leq h\E\big[\|e_n\|^2\big]+ h^{-1} \E\big[\big\|\E\big[\varrho_{n+1}\vert
  \mathcal{F}_n\big]\big\|^2 \big],
\end{align*}
where we used H\"older's inequality in the last step.
We can now combine the bounds for $\Gamma_1$, $\Gamma_2$ and $\Gamma_3$ to
obtain
\begin{align*}
	\E\big[\|e_{n+1}\|^2\big]
	&\leq \big( \mathrm{e}^{2\nuplus h}-1 + h + h^2 C \mathrm{e}^{2 \nuplus h}\big) \E \big[\|e_n\|^2 \big]
	+ 2 \E \big[\|\varrho_{n+1}\|^2\big]+ h^{-1} \E\big[\big\|\E\big[\varrho_{n+1}\vert
	\mathcal{F}_n\big]\big\|^2 \big]\\
	&=: \big( 1 + h C_2 + h^2 C \mathrm{e}^{2 \nuplus h} \big)
  \E \big[\|e_n\|^2 \big] + 2 \E \big[\|\varrho_{n+1}\|^2\big]
	+ h^{-1} \E\big[\big\|\E\big[\varrho_{n+1}\vert \mathcal{F}_n\big]\big\|^2 \big].
\end{align*}
\end{proof}

As a next step, we look at the expectation of the square of the norm of the
local error $\varrho_{n+1}$.

\begin{lemma}\label{lem:E_norm_loc_cond}
  Let Assumptions~\ref{ass:f_high_conv} and \ref{ass:rand_var} be fulfilled and let
  $\varrho_{n+1}$ be given as in \eqref{eq:def_local_error}.
  For scheme \eqref{scheme:SDIRK}, it then follows that there exist constants
  $C, \widehat{K} >0$ that depend on $M_{\mathrm{apriori}}$, which is stated in
  Lemma~\ref{lem:apriori_bound}, such that
  \begin{equation*}
    \E \big[\|\varrho_{n+1}\|^2\big]
    \leq h^6 C + h^5 C \int_{t_n}^{t_{n+1}} \gamma^2_{\widehat{K}}(\tau) \diff{\tau},
  \end{equation*}
  for all $n = 0,1,\dots,N-1$ and $h = \frac{T}{N}$ with $h \in (0,h_{\max}]$.
\end{lemma}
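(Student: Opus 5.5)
We subtract the Taylor expansions of the exact flow and of the numerical one-step map. By \eqref{eq:Taylor_Psi} and Lemma~\ref{lem:taylor_approx}, both with $\genvec = u^n$, $t=t_n$ and $x=\xi_{n+1}$, the terms up to second order cancel and
\begin{align*}
  \varrho_{n+1}
  = \frac{1}{2}\int_0^h (h-\tau)^2 F(t_n+\tau, \Psi(u^n,t_n,\tau))\diff{\tau}
  - \frac{h}{2}\int_0^h (h-\tau)\big(\koness + \ktwoss\big)(t_n,\tau,\xi_{n+1},u^n)\diff{\tau}.
\end{align*}
Here, in contrast to the conditional expectation, we do not exploit any cancellation between the two remainders. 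We simply bound each one separately in norm, using $\|a-b\|^2\le 2\|a\|^2+2\|b\|^2$.

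Throughout, set $K = M_{\mathrm{apriori}}$ from Lemma~\ref{lem:apriori_bound}, so that $\|u^n(\omega)\|\le K$ for every $\omega$. By \eqref{eq:boundPsi}, $\Psi(u^n,t_n,\tau)$ then lies in a ball whose radius depends only on $K$. Choose $\widehat K$ at least this radius and at least the constant $\tK$ from Lemma~\ref{lem:bounds}.

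For the first remainder, the last statement of Lemma~\ref{lem:lipschitzF} gives $\|F\|\le C\gamma_{\widehat K}$. Applying Cauchy--Schwarz yields
\[
\Big\|\tfrac12\int_0^h (h-\tau)^2 F\diff{\tau}\Big\|^2 \le C\int_0^h (h-\tau)^4\diff{\tau}\int_0^h\gamma_{\widehat K}^2(t_n+\tau)\diff{\tau} \le h^5 C\int_{t_n}^{t_{n+1}}\gamma^2_{\widehat K}(\tau)\diff{\tau}.
\]
This bound holds pointwise in $\omega$.

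For the second remainder, we use the refined bounds on $\koness$ and $\ktwoss$ from Lemma~\ref{lem:bounds}, with step size $\tau$ in place of $h$. These give
\[
\|\koness\|+\|\ktwoss\| \le C\big(1+\gamma_{\widehat K}(t_n+\tau\xi)+\gamma_{\widehat K}(t_n+\tau(1-\xi))\big),
\]
where the factors $x$, $\tau$ and $1-x+\tau$ are bounded by constants. The constant term contributes $(h\cdot h^2)^2 = h^6$. For the $\gamma$-terms we apply Cauchy--Schwarz in $\tau$:
\[
\Big(\frac h2\int_0^h(h-\tau)\gamma(t_n+\tau\xi)\diff{\tau}\Big)^2\le h^2\cdot h^3\int_0^h \gamma^2(t_n+\tau\xi)\diff{\tau}.
\]
This is the main obstacle: the argument is randomized, so after the substitution $s=\tau\xi$ a pointwise bound would carry a factor $\xi^{-1}\le 2$ and would only integrate over $[t_n,t_n+h\xi]$. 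Because $\xi\ge\frac12$, the substitution is harmless and we get $\int_0^h\gamma^2(t_n+\tau\xi)\diff{\tau}\le 2\int_{t_n}^{t_{n+1}}\gamma^2$. The term with argument $t_n+\tau(1-\xi)$ is more delicate, since $1-\xi$ can be close to $0$. There we first take the expectation over $\xi_{n+1}$, which is independent of $\F_n$ and hence of $u^n$. By Fubini, $2\int_{1/2}^1\int_0^h\gamma^2(t_n+\tau(1-x))\diff{\tau}\diff{x}$ equals $2\int_0^h\frac{1}{\tau}\int_{t_n}^{t_n+\tau/2}\gamma^2\diff{s}\diff{\tau}$. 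This kernel is not integrable near $\tau=0$ in general, so we avoid it: the weight $(1-x+\tau)$ in the bound on $\ktwoss$ provides exactly the needed factor, because $(1-x)\gamma(t_n+\tau(1-x))$ integrated in $x$ substitutes cleanly into $\frac{1}{\tau}\int(s-t_n)\gamma^2(s)\diff{s}\le\int\gamma^2$. Concretely, we will keep the factor $(1-x+\tau)$ (squared after Cauchy--Schwarz) rather than bounding it by a constant, and use $(1-x+\tau)^2\le 2(1-x)+2\tau^2$ on the relevant factor. After integrating over $x$ and $\tau$, this yields $h^5 C\int_{t_n}^{t_{n+1}}\gamma^2_{\widehat K}$ with $C$ independent of $n$ and $h$.

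Collecting the three contributions and taking the expectation finishes the proof, with $C$ and $\widehat K$ depending only on $M_{\mathrm{apriori}}$ (and on $T$, $h_{\max}$, $\nu$).
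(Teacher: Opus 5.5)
Your argument is correct and is essentially the paper's. It uses the same $\|a-b\|^2\le 2\|a\|^2+2\|b\|^2$ split and Cauchy--Schwarz with $\|F\|\le C\gamma_{\widehat K}$ for the flow remainder. For the stage remainder it uses the bounds of Lemma~\ref{lem:bounds} with the weights $x$, $\tau$, $(1-x+\tau)$ kept, then averages over $\xi_{n+1}$ and substitutes. Your pointwise treatment of the $\theta^1$-terms via $\xi\ge\frac12$ is a harmless shortcut.

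The only slip is in your justification of the crux step. Integrating $(1-x)\gamma^2_{\widehat K}(t_n+\tau(1-x))$ over $x$ for fixed $\tau$ gives $\tau^{-2}\int_{t_n}^{t_n+\tau/2}(s-t_n)\gamma^2_{\widehat K}(s)\diff{s}$, not $\tau^{-1}\int(s-t_n)\gamma^2_{\widehat K}(s)\diff{s}$. So pointwise in $\tau$ you are back to the kernel $\frac{1}{2\tau}\int_{t_n}^{t_n+\tau/2}\gamma^2_{\widehat K}$. The weight only pays off once you also integrate in $\tau$, most cleanly by substituting in $\tau$ first as the paper does: $\int_0^{1/2}y\int_0^h\gamma^2_{\widehat K}(t_n+\tau y)\diff{\tau}\diff{y}=\int_0^{1/2}\int_{t_n}^{t_n+hy}\gamma^2_{\widehat K}(s)\diff{s}\diff{y}\le\frac12\int_{t_n}^{t_{n+1}}\gamma^2_{\widehat K}(s)\diff{s}$.
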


\begin{proof}
  To bound $\varrho_{n+1}$, we first look at the Taylor
  approximations of $\Psi(u^n,t_n,h)$ and $u^{n+1}$ that we have constructed
  earlier. We recall the approximation of $\Psi(u^n,t_n,h)$ as stated in
  Equations~\eqref{eq:Taylor_Psi} and find
  \begin{align*}
    \Psi(u^n,t_n,h)
    &=u^n+ hf(t_n,u^n)+ \frac{h^2}{2} (f_t(t_n,u^n)+f_u(t_n,u^n)f(t_n,u^n))\\
    &\quad +\frac{1}{2}\int_0^h (h-\tau)^2F(t_n+\tau, \Psi(u^n,t_n,\tau))\diff{\tau},
  \end{align*}
  where $F$ is given in~\eqref{eq:def_F}. Since $u^{n+1}$ can be rewritten to
  $\Phi(t_n,h,\xi_{n+1},u^n)$ by the definition of the method, we may apply
  Lemma~\ref{lem:taylor_approx} to obtain similarly
  \begin{align*}
    u^{n+1}
    &=u^n+ hf(t_n,u^n)+ \frac{h^2}{2} \big(f_t(t_n,u^n)+f_u(t_n,u^n)f(t_n,u^n) \big)\\
    &\quad +\frac{h}{2}\int_0^h (h-\tau) \big(
    \koness(t_n,\tau,\xi_{n+1},u^n)+\ktwoss(t_n,\tau,\xi_{n+1},u^n)
    \big)\diff{\tau}.
  \end{align*}
  Thus, the norm squared of $\varrho_{n+1}$ takes on the form
  \begin{align*}
    \|\varrho_{n+1}\|^2
    &= \| \Psi(u^n,t_n,h) - \Phi(t_n,h,\xi_{n+1},u^n) \|^2 \\
    &\leq \frac{1}{2} \Big\| \int_{0}^{h} (h-\tau)^2
    F(t_n + \tau, \Psi(u^n,t_n,\tau) )\diff{\tau} \Big\|^2\\
    &\quad + \frac{h^2}{2} \Big\| \int_0^h (h-\tau)
    \big(\koness(t_n,\tau,\xi_{n+1},u^n)
    +\ktwoss(t_n,\tau,\xi_{n+1},u^n) \big)\diff{\tau} \Big\|^2
    = \Gamma_1 + \Gamma_2.
  \end{align*}
  In the following, we will look at $\Gamma_1$ and $\Gamma_2$ separately.
  For $\Gamma_1$, we apply H\"{o}lder's inequality and
  Lemma~\ref{lem:lipschitzF} to obtain
  \begin{align*}
    \Gamma_1
    &= \frac{1}{2} \Big\| \int_{0}^{h} (h-\tau)^2
    F(t_n + \tau, \Psi(u^n,t_n,\tau) )\diff{\tau} \Big\|^2\\
    &\leq \frac{1}{2} \int_{0}^{h} (h-\tau)^4 \diff{\tau} \int_{0}^{h}
    \| F(t_n + \tau, \Psi(u^n,t_n,\tau) )\|^2 \diff{\tau} \\
    &= \frac{h^5 }{10} \int_{0}^{h} \| F(t_n + \tau, \Psi(u^n,t_n,\tau) )\|^2 \diff{\tau}
    \leq h^5 C \int_{t_n}^{t_{n+1}} \gamma_{\widehat{K}}^2(\tau) \diff{\tau},
  \end{align*}
  where $\widehat{K}$ is chosen such $ \| \Psi(u^n,t_n,\tau)\| \leq \widehat{K}$ for
  every $n\in\{0,\dots,N-1\}$.
  Note that such a $\widehat{K}$ exists because of Lemma~\ref{lem:apriori_bound}
  and Equation~\eqref{eq:boundPsi}.
  For $\Gamma_2$, we first take the expectation and apply Lemma~\ref{lem:bounds}
  and substitute $x \to 1 - x$ in one integral such that
  \begin{align*}
    \E\big[\Gamma_2\big]
    &= \frac{h^2}{2} \E\Big[ \Big\| \int_0^h (h-\tau)
    \big(\koness(t_n,\tau,\xi_{n+1},u^n) + \ktwoss(t_n,\tau,\xi_{n+1},u^n) \big)
    \diff{\tau} \Big\|^2 \Big]\\
    &\leq 2 \frac{h^2}{2} \int_{0}^{h} (h -\tau)^2 \diff{\tau} \int_{0}^{h}
    \int_{\half}^1 \big\|\koness(t_n,\tau,x,u^n) + \ktwoss(t_n,\tau,x,u^n)
    \big\|^2\diff{x} \diff{\tau}\\
    &\leq h^5 C \int_{0}^{h} \Big(\int_{\half}^1
    \big(1+ (x^2+\tau^2)\gamma^2_{\widehat{K}}(t_n+\tau x) \big) \diff{x}
    + \int_{\half}^1 (1-x + \tau)^2 \gamma^2_{\widehat{K}}(t_n+\tau (1-x))\diff{x}
    \Big) \diff{\tau}\\
    &= h^5 C \int_{0}^{h} \Big(\int_{\half}^1
    \big(1+ (x^2+\tau^2)\gamma^2_{\widehat{K}}(t_n+\tau x) \big) \diff{x}
    + \int_{0}^{\half} (x + \tau)^2 \gamma^2_{\widehat{K}}(t_n+\tau x)\diff{x}
    \Big) \diff{\tau}\\
    &\leq h^6 C + h^5 C \Big(\int_{0}^1 \int_{0}^{h} x^2
    \gamma^2_{\widehat{K}}(t_n+\tau x) \diff{\tau} \diff{x}
    + \int_{0}^{h} \int_{0}^1 \tau^2 \gamma^2_{\widehat{K}}(t_n+\tau x)
    \diff{x} \diff{\tau}\Big)
  \end{align*}
  for a constant $C$ that depends on $M_{\mathrm{apriori}}$ from
  Lemma~\ref{lem:apriori_bound}.
  In the two inner integrals, we now apply the substitutions $\tau x \to \tau$ and
  $\tau x \to x$ and can then bound the expectation of $\Gamma_2$ as follows
  \begin{align*}
    \E\big[\Gamma_2\big]
    &\leq h^6 C + h^5 C \Big(\int_{0}^1 \int_{0}^{h x} x
    \gamma^2_{\widehat{K}}(t_n+\tau) \diff{\tau} \diff{x}
    + \int_{0}^{h} \int_{0}^{\tau} \tau \gamma^2_{\widehat{K}}(t_n+x) \diff{x}
    \diff{\tau}\Big)\\
    &\leq h^6 C + h^5 C \int_{t_n}^{t_{n+1}} \gamma^2_{\widehat{K}}(\tau)
    \diff{\tau}.
  \end{align*}
\end{proof}

Additionally, we need a bound on the conditional expectation of the local error
$\varrho_{n}$. This result heavily relies on
the randomness of the method.

\begin{lemma}\label{lem:ell2alp}
	Let Assumptions~\ref{ass:f_high_conv} and \ref{ass:rand_var} be fulfilled for $\sigma\in[0,1]$.
	For scheme \eqref{scheme:SDIRK}, it follows that there exist constants
	$C, \widehat{K} >0$ that depend on $M_{\mathrm{apriori}}$, which is stated in
	Lemma~\ref{lem:apriori_bound}, such that
	\begin{align*}
		\big\| \E\big[\varrho_{n+1}\vert \mathcal{F}_{n} \big] \big\|
		&\leq  h^{3+\half} C
		+ h^{2+ 2\min(\sigma,\frac{1}{2})} C \int_{t_n}^{t_{n+1}} \big(L_{\widehat{K}}(\tau)
		+ \gamma_{\widehat{K}}(\tau) \big) \diff{\tau}
	\end{align*}
	is fulfilled for all $n \in \{0,\dots,N-1\}$ and $h = \frac{T}{N}$
	with $h \in (0,h_{\max}]$.
\end{lemma}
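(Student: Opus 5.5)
Since $u^n$ is $\mathcal{F}_n$-measurable and $\xi_{n+1}$ is independent of $\mathcal{F}_n$ with density $2$ on $[\half,1]$, the conditional expectation is an average over the random parameter with $u^n$ frozen:
\begin{equation*}
  \E\big[\varrho_{n+1}\vert\mathcal{F}_n\big]
  = \Psi(u^n,t_n,h) - 2\int_{\half}^1 \Phi(t_n,h,x,u^n)\diff{x}.
\end{equation*}
We then combine the expansion \eqref{eq:Taylor_Psi} of $\Psi$ with Lemma~\ref{lem:taylor_approx} for $\Phi$. The terms up to order $h^2$ cancel. For the $\Psi$-remainder we use the rewriting via Lemma~\ref{lem:calc} given before \eqref{eq:goal_taylor}. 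Applying Fubini, we obtain
\begin{equation*}
  \E\big[\varrho_{n+1}\vert\mathcal{F}_n\big]
  = h\int_0^h (h-\tau)\, D(\tau)\diff{\tau},
\end{equation*}
where
\begin{equation*}
  D(\tau) := \int_{\half}^1 \big(x^2F(t_n+\tau x,\Psi(u^n,t_n,\tau x)) + (1-x)^2F(t_n+\tau(1-x),\Psi(u^n,t_n,\tau(1-x))) - \koness(t_n,\tau,x,u^n) - \ktwoss(t_n,\tau,x,u^n)\big)\diff{x}.
\end{equation*}
This is exactly the quantity controlled by Lemma~\ref{lem:conc_taylor}, applied with step $\tau$ in place of $h$, $t=t_n$, $v=u^n$ and $K=M_{\mathrm{apriori}}$. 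The constant $\widehat K$ comes from Lemma~\ref{lem:apriori_bound}, \eqref{eq:boundPsi} and Lemma~\ref{lem:bounds}.

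Next we insert the bound of Lemma~\ref{lem:conc_taylor} and integrate term by term against $h(h-\tau)\le h^2$ over $\tau\in[0,h]$. The contributions are as follows.
\begin{itemize}
  \item The $\tau^{1/2}C$ term gives $h\int_0^h(h-\tau)\tau^{1/2}\diff{\tau}\le Ch^{3+\half}$.
  \item The $\tau C$ term gives $Ch^4\le Ch^{3+\half}$.
  \item The terms $\tau C\int_{\half}^1(\tL_{\widehat K}+\gamma_{\widehat K})(t_n+\tau x)\diff{x}$ and the analogous ones at $t_n+\tau(1-x)$ are handled by substitution. We use $\int_{\half}^1 g(t_n+\tau x)\diff{x}=\tau^{-1}\int_{t_n+\tau/2}^{t_n+\tau} g$. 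The factor $\tau$ cancels, leaving at most $h^3\int_{t_n}^{t_{n+1}}(\tL_{\widehat K}+\gamma_{\widehat K})$.
  \item Since $\tL_{\widehat K}$ is a multiple of $\gamma_{\widehat K}$ (see the proof of Lemma~\ref{lem:implied_continuity}), these terms can be absorbed into $\gamma_{\widehat K}$. Alternatively, Cauchy--Schwarz gives $\int_{t_n}^{t_{n+1}}\tL\le h^{1/2}\|\tL\|_{L^2}$, so they also fit into $h^{3+\half}C$.
  \item The $\tau^{2\sigma}$ term is handled by the same substitution. It gives $\tau^{2\sigma-1}\int_{t_n}^{t_n+\tau}(L+\gamma)$, and hence after integration at most $h^{2+2\sigma}\int_{t_n}^{t_{n+1}}(L_{\widehat K}+\gamma_{\widehat K})$ up to a constant. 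Here $2\sigma-1\ge -1$, so the $\tau$-integral of $(h-\tau)\tau^{2\sigma-1}$ is finite only when $\sigma>0$; see the next paragraph.
\end{itemize}

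\textbf{Main obstacle.} The delicate step is the bookkeeping of exponents in this last term, which is where the $\min(\sigma,\half)$ arises. Since $h\le h_{\max}\le T$, we have $h^{2+2\sigma}\le C h^{2+2\min(\sigma,\half)}$, so the stated exponent follows from the cleaner $\sigma$-exponent. However, Lemma~\ref{lem:conc_taylor} should be checked for powers $\tau^{2\sigma}$ with $\sigma>\half$. Its $\Gamma_1$ bound also produced $h^{2\sigma}$ from $\|v(\theta)-\ell\|^\sigma\le Ch^{2\sigma}$, so this appears consistent. At $\sigma=0$ the integral $\int_0^h(h-\tau)\tau^{-1}\diff\tau$ diverges. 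To avoid this, I would keep the inner integral as $\int_{t_n}^{t_n+\tau}$ and bound it by $\int_{t_n}^{t_{n+1}}$ without dividing by $\tau$ first. For $\sigma=0$ the prefactor is then $\tau^{-1}$ times an integral over $[t_n+\tau/2,t_n+\tau]$. Dominating this with Fubini, $\int_0^h\tau^{-1}\one_{[\tau/2,\tau]}(s)\diff\tau=\log 2$, gives the needed finite bound. Finally, we collect all the terms and take norms to obtain the claimed estimate.
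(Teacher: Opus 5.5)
Your argument follows the paper's proof step for step. You freeze $u^n$ and average over $x$ with density $2$, cancel the terms up to order $h^2$ via \eqref{eq:Taylor_Psi}, Lemma~\ref{lem:calc} and Lemma~\ref{lem:taylor_approx}, apply Lemma~\ref{lem:conc_taylor} with step $\tau$, and integrate against $h(h-\tau)$. Your treatment of the $\tau^{1/2}$, $\tau$ and $\tau\int_{1/2}^1(\tL_{\widehat K}+\gamma_{\widehat K})$ contributions is the paper's. One aside on where $\min(\sigma,\frac12)$ comes from: in the paper it enters when $h^3\int_{t_n}^{t_{n+1}}(\tL_{\widehat K}+\gamma_{\widehat K})$ is merged with the $h^{2+2\sigma}$ term, using $\tL_{\widehat K}\le C\gamma_{\widehat K}$ and $h^3\le Ch^{2+2\min(\sigma,1/2)}$. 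Your Cauchy--Schwarz alternative shows this merging is not even needed.

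The one place you deviate, the $\tau^{2\sigma}$ terms, has a gap at $\sigma=0$, which the statement explicitly includes. Substituting in $x$ at fixed $\tau$ and then discarding the weights $x^2$, $(1-x)^2$ creates an artificial factor $\tau^{2\sigma-1}$. For $\sigma>0$ this only costs a constant of order $1/\sigma$. At $\sigma=0$, however, your Fubini patch covers only the $\theta^1$ contributions, where the new variable lies in $[\tau/2,\tau]$ and $\int_s^{\min(2s,h)}\tau^{-1}\diff{\tau}\le\log 2$. For the $\theta^2$ contributions the new variable $s=\tau(1-x)$ lies in $[0,\tau/2]$. Fubini then produces $\int_{2s}^{h}(h-\tau)\tau^{-1}\diff{\tau}$, which behaves like $h\log\frac{h}{2s}$ and is unbounded as $s\to0$. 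So no bound of the form $Ch^2\int_{t_n}^{t_{n+1}}(L_{\widehat K}+\gamma_{\widehat K})$ follows for general $L^2$ functions $L_{\widehat K}$, $\gamma_{\widehat K}$. Your other suggestion, bounding the inner integral by $\int_{t_n}^{t_{n+1}}$, leaves the divergent $\int_0^h(h-\tau)\tau^{-1}\diff{\tau}$.

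One repair is to keep the weight $(1-x)^2=(s/\tau)^2$. This turns the $\tau$-integral into $\int_{2s}^h(h-\tau)s^2\tau^{-3}\diff{\tau}\le h/8$. The paper avoids the issue altogether by swapping the order of integration and substituting in $\tau$ at fixed $x$, i.e.\ $s=t_n+\tau x$, resp.\ $s=t_n+\tau(1-x)$. The Jacobian $1/x\le 2$ is harmless, and $1/(1-x)$ is absorbed by $(1-x)^2$. Bounding $(h-\tau)\tau^{2\sigma}\le h^{1+2\sigma}$ then gives $Ch^{2+2\sigma}\int_{t_n}^{t_{n+1}}(L_{\widehat K}+\gamma_{\widehat K})$ with a constant uniform in $\sigma\in[0,1]$.
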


\begin{proof}
  For the term $\E\big[ \varrho_{n+1}\vert\mathcal{F}_{n} \big] =
  \E[\Psi(u^n,t_n,h)\vert\mathcal{F}_{n}] - \E[u^{n+1} \vert\mathcal{F}_{n}]$,
  we first recall the expansion for $\Psi$ from \eqref{eq:Taylor_Psi}
	\begin{align*}
		&\E \big[\Psi(u^n,t_n,h)\vert\mathcal{F}_{n}\big]\\
		&= u^n + h f(t_n,u^n) + \frac{h^2}{2} \big(f_t(t_n,u^n) +
		f_u(t_n,u^n) f(t_n,u^n)\big)\\
		&\quad + h\int_0^h (h-\tau)\int_{0}^{1} x^2 F(t_n+ \tau x, \Psi(u^n,t_n,\tau x))\diff{x}\diff{\tau}\\
		&= u^n + h f(t_n,u^n) + \frac{h^2}{2} \big(f_t(t_n,u^n) +
		f_u(t_n,u^n) f(t_n,u^n)\big)\\
		&\quad + h\int_0^h (h-\tau) \int_{\frac{1}{2}}^{1} \big(x^2 F(t_n+ \tau x,
      \Psi(u^n,t_n,\tau x))
    + (1-x)^2 F(t_n+ \tau (1-x), \Psi(u^n,t_n,\tau (1-x)))\big) \diff{x} \diff{\tau},
	\end{align*}
	where $F$ is given in~\eqref{eq:def_F} and we apply that $u^n$ is $\F_n$-measurable, compare Lemma~\ref{lem:exist_measble}.
	For $u^{n+1}$, we apply Lemma~\ref{lem:taylor_approx} to obtain
	\begin{align*}
		\E \big[u^{n+1} \vert\mathcal{F}_{n} \big]
		&= \E \Big[u^n+ hf(t_n,u^n) + \frac{h^2}{2}
		\big(f_t(t_n,u^n)+f_u(t_n,u^n)f(t_n,u^n) \big)\\
		&\quad +\frac{h}{2}\int_0^h (h-\tau) \big(\koness(t_n,\tau,\xi_{n+1},u^n)
		+\ktwoss(t_n,\tau,\xi_{n+1},u^n) \big)\diff{\tau} \Big\vert \mathcal{F}_{n} \Big]\\
		&= u^n+ hf(t_n,u^n) + \frac{h^2}{2}
		\big(f_t(t_n,u^n)+f_u(t_n,u^n))f(t_n,u^n) \big) \\
		&\quad + h \int_{\frac{1}{2}}^{1} \int_0^h (h-\tau) \big(\koness(t_n,\tau,x,u^n)
		+\ktwoss(t_n,\tau,x,u^n) \big)\diff{\tau} \diff{x},
	\end{align*}
	where we use that $\xi_{n+1}$ is independent of $\F_n$ and $u^n$ is
	$\F_n$-measurable, compare Lemma~\ref{lem:exist_measble}. The next step is to apply Lemma~\ref{lem:conc_taylor} and then insert the
	substitutions $t_n + \tau x \to s$, $t_n + \tau (1-x) \to s$ in the first
	integral, $t_n + \tau x \to \tau$ in the second integral and
	$t_n + \tau (1-x) \to \tau$ in the third. For constants $C, \widehat{K}$ that depend on
	$M_{\mathrm{apriori}}$ (compare Lemma~\ref{lem:bounds} for more details), it then follows that
	\begin{align*}
		&\big\|\E\big[\varrho_{n+1} \vert\mathcal{F}_{n}\big]\big\|
    =\big\| \E[\Psi(u^n,t_n,h)\vert\mathcal{F}_{n}] - \E[u^{n+1}\vert\mathcal{F}_{n}] \big\|\\
		&\leq h \int_0^h (h-\tau) \Big\| \int_{\frac{1}{2}}^{1} \big(x^2 F(t_n+ \tau
      x, \Psi(u^n,t_n,\tau x)) + (1-x)^2 F(t_n+ \tau (1-x), \Psi(u^n,t_n,\tau (1-x))) \\
		&\hspace{3cm} - \koness(t_n,\tau,x,u^n)
		-\ktwoss(t_n,\tau,x,u^n) \big) \diff{x} \Big\| \diff{\tau} \\
		&\leq h \int_0^h (h-\tau) \Big( \tau^{\half} C + \tau C \int_{\half}^1 \big(
		\tL_{\tK}(t_n+ \tau x)+\gamma_{\widehat{K}}(t_n+ \tau x) + \gamma_{\widehat{K}}(t_n+ \tau (1-x))\big)\diff{x}\Big)\diff{\tau} \\
		&\hspace{1.5cm} + h C \int_{\half}^1 \int_0^h (h-\tau) \tau^{2\sigma} \big(x^2(L_{\widehat{K}}(t_n+ \tau x)
		+ \gamma_{\widehat{K}}(t_n+ \tau x)) \big) \diff{\tau} \diff{x}\\
		&\hspace{1.5cm} + h C \int_{\half}^1 \int_0^h (h-\tau) \tau^{2\sigma} \big((1-x)^2
		(L_{\widehat{K}}(t_n+ \tau (1-x))+\gamma_{\widehat{K}}(t_n+ \tau (1-x)))\big) \diff{\tau} \diff{x}\\
		&= h \int_0^h (h-\tau) \Big( \tau^{\half} C + C \int_{t_n+ \frac{\tau}{2}}^{t_{n+1}} \big(
		\tL_{\tK}(s)+\gamma_{\widehat{K}}(s) \big)\diff{s}
		+ C \int_{t_n}^{t_n+ \frac{\tau}{2}} \gamma_{\widehat{K}}(s) \diff{s} \Big) \diff{\tau}\\
		&\hspace{1.5cm} + h C \int_{\half}^1 \int_{t_n}^{t_n+ h x} (h-\tau) \tau^{2\sigma} x \big(L_{\widehat{K}}(\tau)
		+ \gamma_{\widehat{K}}(\tau) \big) \diff{\tau} \diff{x}\\
		&\hspace{1.5cm} + h C \int_{\half}^1 \int_{t_n}^{t_n +h(1-x)} (h-\tau) \tau^{2\sigma} (1-x) \big(
		L_{\widehat{K}}(\tau)+\gamma_{\widehat{K}}(\tau) \big) \diff{\tau} \diff{x}\\
		&\leq h^{3+\half} C + h^3 C \int_{t_n}^{t_{n+1}} \big(
		\tL_{\widehat{K}}(s)+\gamma_{\widehat{K}}(s) \big)\diff{s}
		+ h^{2+ 2\sigma} C \int_{t_n}^{t_{n+1}} \big(L_{\widehat{K}}(\tau)
		+ \gamma_{\widehat{K}}(\tau) \big) \diff{\tau}.
	\end{align*}
\end{proof}

Combining our results, we get a convergence rate for our randomized Runge--Kutta
method.
\begin{theorem}\label{thm:conv}
  Let Assumptions~\ref{ass:f_high_conv} and \ref{ass:rand_var} be fulfilled.
  For scheme \eqref{scheme:SDIRK}, it then follows that
  \begin{align*}
    \E\big[\|e_{n+1}\|^2\big]
    \leq h^{4+4 \min(\sigma, \frac{1}{4})} C \exp (C_2 T)
  \end{align*}
  for all $n \in  \{1,\dots,N-1$\} and $h = \frac{T}{N}$ with $h \in (0,h_{\max}]$.
  Note that the constant $C_2$ comes from Lemma~\ref{lem:error_bound} and only depends on $\nu$ and $h_{\max}$. Moreover, the constant $C >0$ depends on
  $M_{\mathrm{exact}}$ and $M_{\mathrm{apriori}}$, which are stated in
  Lemma~\ref{lem:uexist} and Lemma~\ref{lem:apriori_bound}.
\end{theorem}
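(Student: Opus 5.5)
The plan is to start from the one-step recursion of Lemma~\ref{lem:error_bound} and insert the local error bounds of Lemma~\ref{lem:E_norm_loc_cond} and Lemma~\ref{lem:ell2alp}. A discrete Gr\"onwall argument then finishes. The only delicate point is the treatment of the term $h^2 C \mathrm{e}^{2\nuplus h}\E[\|e_n\|^2]$. Its constant $C$ contains the local Lipschitz constant of $f$, and it must not end up inside the exponential $\exp(C_2 T)$.

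\textbf{Step 1: the $h^2$ term.} We use the B-convergence result, Lemma~\ref{lem:B_conv}, which gives $\E[\|e_n\|^2]\le h^2 C_3 \max_t\|u_{tt}(t)\|^2$. The term $h^2 C\mathrm{e}^{2\nuplus h}\E[\|e_n\|^2]$ is then at most $h^4 C$ per step. Since $u_{tt}$ is continuous by Lemma~\ref{lem:implied_continuity}, this maximum is finite. We therefore treat this term as a source term, not as part of the amplification factor. Summed over $N=T/h$ steps it contributes $h^3 C$. That is not yet small enough for the claimed rate.

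To fix this, we first obtain a preliminary rate, say $\E[\|e_n\|^2]\le Ch^{3}$, by running the argument below once. Then we feed it back, so the term becomes $h^5 C$ per step and $h^4 C$ in total. Alternatively, we move this term into the factor $(1+hC_2+h^2C)$; the resulting Gr\"onwall factor $\exp(C_2T + hCT)$ is uniformly bounded for $h\le h_{\max}$ and can be absorbed into the generic prefactor $C$. I would take this second route if the remark on constants allows it. Otherwise I would use the bootstrap.

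\textbf{Step 2: local terms.} By Lemma~\ref{lem:E_norm_loc_cond}, summing $2\E[\|\varrho_{k+1}\|^2]$ over $k$ gives at most $h^5 C + h^5 C\|\gamma_{\tK}\|_{L^2}^2 \le Ch^5$.

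For the conditional-mean term, we square the bound of Lemma~\ref{lem:ell2alp} and estimate $\big(\int_{t_k}^{t_{k+1}}(L+\gamma)\big)^2\le h\int_{t_k}^{t_{k+1}}(L+\gamma)^2$ by Cauchy--Schwarz. This gives
\begin{equation*}
h^{-1}\E\big[\|\E[\varrho_{k+1}\vert\F_k]\|^2\big]\le Ch^{6}+Ch^{4+4\min(\sigma,\frac12)}\int_{t_k}^{t_{k+1}}(L_{\tK}+\gamma_{\tK})^2\diff{\tau}.
\end{equation*}
Summing over $k$ yields $Ch^5 + Ch^{4+4\min(\sigma,\frac12)}$. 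Together with the $h^5$ from the first part and the $h^4$ from Step~1, the total is of order $h^{4+4\min(\sigma,\frac14)}$, since $4\min(\sigma,\tfrac14)\le 1$.

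\textbf{Step 3: Gr\"onwall.} The recursion now reads
\begin{equation*}
a_{n+1}\le(1+hC_2)a_n+b_n,\qquad a_0=0,\qquad \sum_k b_k\le Ch^{4+4\min(\sigma,\frac14)}.
\end{equation*}
Iterating gives $a_{n+1}\le \mathrm{e}^{C_2T}\sum_k b_k$, which is the claim.

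\textbf{Main obstacle.} The hardest part is Step~1: making sure the $h^2C$ perturbation, whose $C$ depends on Lipschitz data of $f$, does not enter the exponential, while the lower-order source terms still sum to the target order. I would first try the B-convergence substitution combined with a single bootstrap. If the powers do not line up, I would fall back on absorbing $\exp(hCT)$ into $C$, accepting that this uses $h\le h_{\max}$.
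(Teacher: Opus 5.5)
Your architecture is the paper's: Lemma~\ref{lem:error_bound}, the local bounds of Lemmas~\ref{lem:E_norm_loc_cond} and~\ref{lem:ell2alp} with Cauchy--Schwarz, Lemma~\ref{lem:B_conv} for the $h^2C\,\E[\|e_n\|^2]$ term, and a discrete Gr\"onwall argument. However, your bootstrap stops one round too early.

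Lemma~\ref{lem:B_conv} makes that term $Ch^4$ per step, hence $Ch^3$ in total. The first Gr\"onwall pass therefore gives $\E[\|e_n\|^2]\le Ch^3$. Feeding this back once makes the term $Ch^5$ per step and $Ch^4$ in total, so the second pass yields only $\E[\|e_{n+1}\|^2]\le Ch^4$.

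Your assertion that this $h^4$ is of order $h^{4+4\min(\sigma,\frac14)}$ ``since $4\min(\sigma,\frac14)\le 1$'' is backwards. That inequality lets you absorb an $h^5$ term. For $\sigma>0$, however, $h^4/h^{4+4\min(\sigma,\frac14)}\to\infty$ as $h\to0$, so an $h^4$ term cannot be absorbed. You must feed $\E[\|e_n\|^2]\le Ch^4$ back once more. The term then becomes $Ch^6$ per step and $Ch^5$ in total, which is dominated by $h^{4+4\min(\sigma,\frac14)}$. This is exactly the paper's chain $h^3\to h^4\to h^{4+4\min(\sigma,\frac14)}$.

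Your fallback is to move $h^2C\mathrm{e}^{2\max(0,\nu)h}$ into the amplification factor and absorb $\exp(hCT)\le\exp(h_{\max}CT)$ into the prefactor. This does produce an inequality of the right shape. But that $C$ contains the local Lipschitz constant of $f$ on a ball determined by $M_{\mathrm{exact}}$ and $M_{\mathrm{apriori}}$. The prefactor thus becomes exponential in the Lipschitz data; for $\nu\le0$ one has $h_{\max}=T$, so the factor is $\exp(CT^2)$.

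The statement deliberately keeps $\exp(C_2T)$ as the only exponential, with $C_2$ depending only on $\nu$ and $h_{\max}$ (cf.\ Remark~\ref{rem:C}). Avoiding precisely this factor is why the paper bootstraps instead of absorbing the term into the Gr\"onwall factor. The fallback defeats the point of the theorem, so drop it and complete the bootstrap.
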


\begin{proof}
  In this proof, we apply a bootstrap argument to ensure that the error constant
  stays as small as possible and no Lipschitz constants appear in the
  exponential function.
  First we apply Lemma~\ref{lem:error_bound} followed by inserting the bounds
  for $\E [\|\varrho_{n+1}\|^2]$ from Lemma~\ref{lem:E_norm_loc_cond} and for $\E[ \|\E [\varrho_{n+1}\vert 	\mathcal{F}_n ] \| ]$ from Lemma~\ref{lem:ell2alp} to obtain
  \begin{align}
  	\notag
  	&\E\big[\|e_{n+1}\|^2\big] - \E\big[\|e_{n}\|^2\big]\\
  	\notag
  	&\leq h C_2 \E \big[\|e_n\|^2 \big]
  	+ h^2 C \E \big[\|e_n\|^2 \big]
  	+ 2 \E \big[\|\varrho_{n+1}\|^2\big]
  	+ h^{-1} \big( \E\big[\big\|\E\big[\varrho_{n+1}\vert 	\mathcal{F}_n\big]\big\| \big] \big)^2\\
  	&\leq h C_2 \E \big[\|e_n\|^2 \big]
  	+ h^2 C \E \big[\|e_n\|^2 \big]
  	+ h^6 C
  	+ h^{4+ 4 \min(\sigma, \frac{1}{4})} \int_{t_n}^{t_{n+1}} \big(\gamma_{\widehat{K}}^2(\tau) + \tL_{\widehat{K}}^2(\tau)\big) \diff{\tau},
  	\label{eq:proof_conv_bootstrap1}
  \end{align}
  where the constants $C, \widehat{K} >0$ depend on $M_{\mathrm{apriori}}$, which are stated in Lemma~\ref{lem:uexist} and
  Lemma~\ref{lem:apriori_bound}, respectively.
  We can now apply Lemma~\ref{lem:B_conv} for the second summand containing
  $\E[\|e_n\|^2]$ to find
  \begin{equation}\label{eq:proof_conv_bootstrap2}
    \E\big[\|e_{n+1}\|^2\big] - \E\big[\|e_{n}\|^2\big]
    \leq h C_2 \E\big[\|e_n\|^2\big] + h^4 C + h^{4+ 4 \min(\sigma, \frac{1}{4})} \int_{t_n}^{t_{n+1}} \big(\gamma_{\widehat{K}}^2(\tau) + \tL_{\widehat{K}}^2(\tau)\big) \diff{\tau}.
  \end{equation}
  We now sum up this inequality from $k = 0$ to $n$ and apply a telescopic sum
  argument to obtain
  \begin{align*}
    \E\big[\|e_{n+1}\|^2\big] - \|e_0\|^2
    \leq h C_2 \sum_{k=0}^{n} \E\big[\|e_k\|^2\big] + h^3 C+ h^{4+ 4 \min(\sigma, \frac{1}{4})} \int_{0}^{T} \big(\gamma_{\widehat{K}}^2(\tau) + \tL_{\widehat{K}}^2(\tau)\big) \diff{\tau}.
  \end{align*}
  The next step is to apply Gr\"onwall's inequality and the fact that
  $\|e_0\|=0$ to obtain
  \begin{align*}
    \E\big[\|e_{n+1}\|^2\big]
    \leq h^3 C \exp(C_2 T).
  \end{align*}
  Inserting the previous bound in the step from \eqref{eq:proof_conv_bootstrap1}
  to \eqref{eq:proof_conv_bootstrap2} instead of Lemma~\ref{lem:B_conv}, we can
  argue analogously to find
  \begin{align*}
    \E\big[\|e_{n+1}\|^2\big]
    \leq h^4 C \exp(C_2 T).
  \end{align*}
  Applying this bootstrap argument once more, we finally obtain
  \begin{align*}
    \E\big[\|e_{n+1}\|^2\big]
    \leq \Big(h^5 C + h^{4+4 \min(\sigma, \frac{1}{4})} C \int_{0}^{T} \big(\gamma_{\widehat{K}}^2(\tau)
     + \tL_{\widehat{K}}^2(\tau)\big) \diff{\tau}\Big) \exp(C_2 T)
    \leq h^{4+4 \min(\sigma, \frac{1}{4})} C \exp(C_2 T).
  \end{align*}
\end{proof}

\section{Numerical examples}\label{sec:numExp}

To verify our theoretical findings from the previous setting and to show the
advantage of the randomized method, we conduct two numerical experiments.
We compare our scheme with some well-known methods.
The first is the implicit Euler method, sometimes called backward Euler, shown
in Table~\ref{scheme:Imp_Eul}.
This method is a first-order stiffly accurate one-stage method.
The second deterministic method, cf.\ Table~\ref{scheme:det_SDIRK},
is obtained directly from scheme~\eqref{scheme:det_fam_SDIRK}
by setting  $x = \frac{1}{2}(1+\frac{\sqrt{3}}{3})$,
thereby satisfying the third-order conditions.

\begin{table}[ht]
  \begin{minipage}{0.45\linewidth}
    \centering
    \vspace{10pt}
    \begin{tabular}{c|c}
      $1$   & $1$     \\ \hline
      & $1$
    \end{tabular}
    \centering
    \vspace{12pt}
    \caption{Implicit Euler (impl. Euler)}
    \label{scheme:Imp_Eul}
  \end{minipage}
  \hfill
  \begin{minipage}{0.45\linewidth}
    \centering
    \begin{tabular}{c|cc}
      $\frac{1}{2}(1+\frac{\sqrt{3}}{3})$
      & $\frac{1}{2}(1+\frac{\sqrt{3}}{3})$   & $0$   \\
      $1-\frac{1}{2}(1+\frac{\sqrt{3}}{3})$
      & $-\frac{\sqrt{3}}{3}$ & $\frac{1}{2}(1+\frac{\sqrt{3}}{3})$  \\ \hline
      & $\half$ & $\half$
    \end{tabular}
    \centering
    \caption{SDIRK3}
    \label{scheme:det_SDIRK}
  \end{minipage}
\end{table}

The third scheme, cf.\ Table~\ref{scheme:Theta}, is the implicit randomized
$\theta$-method, and the fourth, cf.\ Table~\ref{scheme:ERK}, is an explicit
randomized Runge--Kutta method.
They have been studied in \cite{2025-bochacik_convergence} and \cite{Kruse2017},
and neither is mean-square A-stable.
Note that in contrast to our method, the random variable $\theta$ has a uniform
distribution over $[0,1]$.

\begin{table}[ht]
  \begin{minipage}{0.45\linewidth}
    \centering
    \vspace{10pt}
    \begin{tabular}{c|c}
      $\theta$   & $\theta$     \\ \hline
      & $1$
    \end{tabular}
    \centering
    \vspace{12pt}
    \caption{Randomized $\theta$-method (rand. $\theta$-method)
    \cite{2025-bochacik_convergence}}
    \label{scheme:Theta}
  \end{minipage}
  \hfill
  \begin{minipage}{0.45\linewidth}
    \centering
    \begin{tabular}{c|cc}
      $0$   & $0$  & $0$   \\
      $\theta$ & $\theta$  & $0$ \\ \hline
      & $0$ & $1$
    \end{tabular}
    \centering
    \caption{Randomized explicit RK (rand.~ERK) \cite{Kruse2017}}
    \label{scheme:ERK}
  \end{minipage}
\end{table}

The methods are compared based on their relative error at the final time $t=T$.
For the randomized methods, we report the root-mean-square error averaged over $100$ samples,
given by
\begin{equation*}
  \frac{\big(\E\big[\|e_N\|^2\big]\big)^{\half}}{\|u(T)\|}\approx
  \Big(\frac{1}{100}\sum_{i=1}^{100} \frac{\|u^{(i)}_N-u(T)\|^2}{\|u(T)\|^2}
  \Big)^{\half},
\end{equation*}
where $u^{(i)}_N$ is from the $i$-th sample.

\subsection{Nonlinear stiff equation} \label{sec:nonlinExp}
We assess the method's efficiency and convergence rate for stiff problems
using the following ODE
\begin{equation}\label{eq:AllenCahn}
  \begin{cases}
    u'(t)=u(t)-u(t)^3, \quad t\in[0,0.1],\\
    u(0)=10^2,
  \end{cases}
\end{equation}
which has the exact solution $u(t)=10^2(10^4+(1-10^4)\mathrm{e}^{-2t})^{-\half}$.
Due to the third-order polynomial term, this equation is nonlinear and for large
values of $u$, it portrays a stiff behaviour.
We observe, cf.\ Figure~\ref{fig:Allen},
that the randomized $\theta$-method and the randomized ERK method
only lead to useful approximations for step sizes less than $\sim 10^{-4}$.
This step size restriction is due to the fact that these methods are not A-stable.
While the (implicit) $\theta$-method leads to a bounded solution for larger step
sizes, it still performs worse than the other methods for most step sizes.
Aside from the worse stability properties of these two methods,
both schemes show lower convergence orders than our method.

The implicit Euler method performs well for large step sizes,
but it is outperformed for smaller step sizes by the higher-order methods.
Our randomized method shows good stability properties for all step sizes
and achieves the theoretically proven convergence rate of $2.5$.
In this setting where the exact solution is sufficiently regular,
the third-order SDIRK3 method performs slightly better than our randomized method.

\begin{figure}[ht]
  \includegraphics[width=0.85\textwidth]{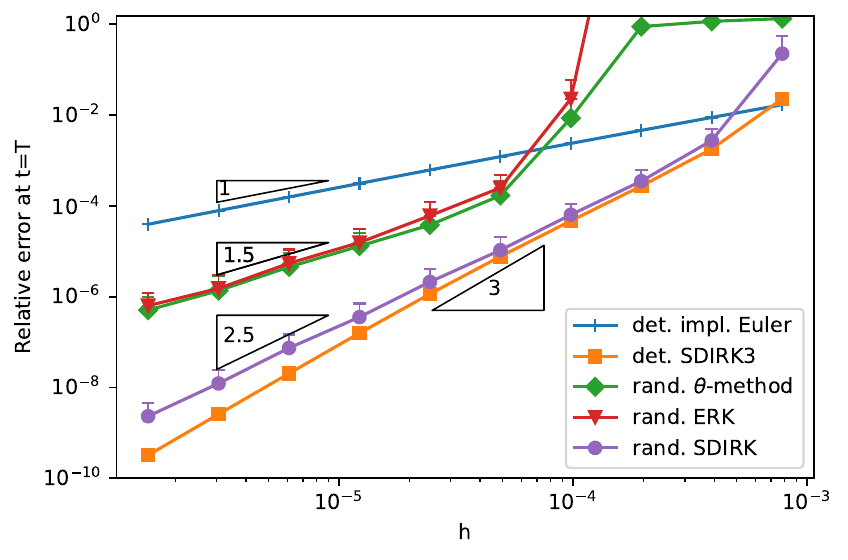}
  \caption{Convergence plot for the nonlinear ODE \eqref{eq:AllenCahn}.
  For the randomized methods, the whiskers indicate the upper bound of the
  one-sided 95\% confidence interval based on the empirical variance.
  The triangles indicate the slopes corresponding to orders 1, 1.5, 2.5 and 3.
  }
  \label{fig:Allen}
\end{figure}

\subsection{Fooling functions} \label{sec:foolingExp}

We observed in the previous example
that the third-order method performs slightly better than our randomized method.
This is to be expected in a setting where the solution is sufficiently regular.
Under lower regularity assumptions, one can often show that randomized schemes
have a higher convergence order compared to deterministic schemes, compare
\cite{Heinrich2008}.
The following example with a highly oscillatory fooling function breaks
a deterministic method while the randomized method is less severely affected.
Consider
\begin{align}\label{eq:PR_fooling_cosp}
  \begin{cases}
    u'(t)= - 10\big(1-g(t)\big)u(t), \quad t\in[0,1],\\
    u(0)=1,
  \end{cases}
\end{align}
with the exact solution $u(t)=\exp (-10(t-\int_0^tg(s)\diff{s}) )$.
First, we choose
\begin{equation}\label{eq:fooling_rhs_cosine}
  g(t)=\frac{1}{10}\cos(2\pi p t),
\end{equation}
where $t \in \R$ and $p=2^{12}$ is the number of oscillations.
To fool the deterministic scheme, we choose the grid $t_n = nh$ for
$h = 2^{-k}$ with $k \in \N$ and $n \in \{1,\dots,2^{k}\}$.
So the equalities
\begin{align*}
  g(t_n + a h)
  = \frac{1}{10} \cos(2\pi p (t_n + a h) )
  = \frac{1}{10} \cos(2\pi p n h + 2\pi p a h )
  = \frac{1}{10} \cos(2\pi p a h )
  = g(a h)
\end{align*}
and
\begin{align*}
  g(a h)
  = \frac{1}{10} \cos(2\pi p a h )
  = \frac{1}{10} \cos(- 2\pi p a h )
  = \frac{1}{10} \cos(2\pi p (1-a) h )
  = g((1-a) h)
\end{align*}
hold for every $a \in (0,1)$ and $h = 2^{-k}$ with $k\leq 12$ since $p h \in \N$.
This shows that, when $k\leq 12$, the evaluations of $g$ for the SDIRK3 method
have all the same value.
Since the SDIRK3 method cannot resolve the oscillations, the numerical error stagnates
regardless of the step size (see Figure~\ref{fig:foolings}, left).
However, the randomized method captures the oscillations
and converges with an observed rate of $0.5$.
Note that for very small step sizes the SDIRK3 method performs well
since \eqref{eq:PR_fooling_cosp} is smooth despite the fast oscillations.
This can already be seen in the last data point of
the left graph of Figure~\ref{fig:foolings}.
Note that we do not observe a convergence order of $2.5$,
which we observed in our theoretical results.
However, the highly oscillating function mimics a non-smooth behaviour.
Thus, for larger step sizes this function acts as a function
that is not even continuous.

In a second test, we choose $g$ to be the Weierstra\ss{} function given by
\begin{equation}\label{eq:fooling_rhs_weierstrass}
  g(t)=\sum_{k=1}^{\infty} 2^{-\frac{k}{2}} \cos(2^{k}\pi t).
\end{equation}
This particular choice of the  Weierstra\ss{} function
is $\alpha$-H\"{o}lder continuous for $\alpha<\half$.
Note that we have to truncate the sum in the numerical experiment.
We only include $30$ summands in our test.
According to \cite{Heinrich2008}, we can expect a convergence rate of $0.5$
for deterministic methods and $1$ for randomized methods,
as observed in Fig.~\ref{fig:foolings} (right).

\begin{figure}[ht]
  \includegraphics[width=0.45\textwidth]{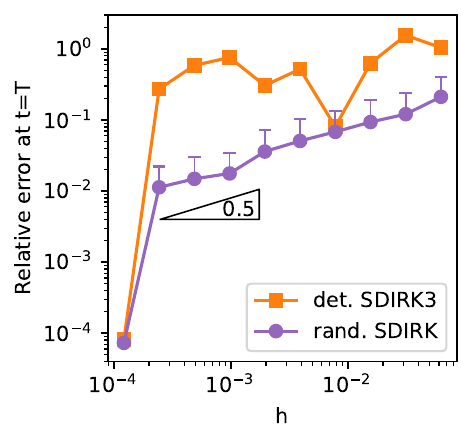}
  \includegraphics[width=0.45\textwidth]{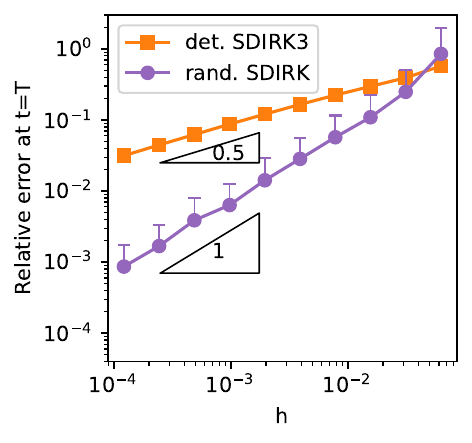}
  \caption{Numerical errors for problem~\eqref{eq:PR_fooling_cosp}
  with the functions~\eqref{eq:fooling_rhs_cosine} (left)
  and~\eqref{eq:fooling_rhs_weierstrass} (right).}
  \label{fig:foolings}
\end{figure}

\bigskip

In summary, the numerical experiments confirm the theoretical analysis. While
the deterministic third-order SDIRK3 scheme is slightly more efficient
for smooth problems, the randomized method is more robust in
low-regularity settings.

\appendix

\section{Auxiliary result} \label{sec:appendix}

\begin{lemma}\label{lem:calc}
  For every $L^1$-integrable function $g \colon \R \rightarrow \R^d$, $h >0$ and
  $t \in \R$, it follows that
  \begin{align*}
    h \int_{0}^{h}  (h -s) \int_{0}^1x^2 g(t + sx) \diff{x} \diff{s}
    = \int_{0}^{h} \frac{(h- s)^2}{2} g(t + s) \diff{s}.
  \end{align*}
\end{lemma}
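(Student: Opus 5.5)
The plan is to reduce the double integral on the left to a single integral in $r = sx$ by Fubini's theorem and a substitution, and then evaluate the resulting polynomial weight explicitly.

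First I would justify the exchange of integrals. The integrand $(h-s)x^2 g(t+sx)$ is measurable on $[0,h]\times[0,1]$. Its absolute value has finite integral: substituting $r=sx$ in the inner integral for fixed $s>0$ gives
\[
\int_0^1 x^2 \|g(t+sx)\|\diff{x} \le s^{-1}\int_0^s \|g(t+r)\|\diff{r},
\]
and multiplying by $(h-s)$ gives a quantity bounded by $h\,s^{-1}\|g\|_{L^1}$. That is not integrable near $s=0$, so this direct bound is too crude. The better route is to use the exact factor: since $x^2 = r^2/s^2$, the inner integral equals
\[
s^{-3}\int_0^s r^2 \|g(t+r)\|\diff{r} \le s^{-1}\int_0^s\|g(t+r)\|\diff{r} \le \|g\|_{L^1}/s .
\]
The remaining singularity at $s=0$ is then removed by swapping the order of the $s$- and $r$-integrals as in the main step below, which for the absolute value is justified by Tonelli. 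So I will work with nonnegative integrands first via Tonelli and deduce the identity for $g$ via Fubini.

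Main step: for fixed $s\in(0,h]$, substitute $r = sx$ to get
\[
\int_0^1 x^2 g(t+sx)\diff{x} = s^{-3}\int_0^s r^2 g(t+r)\diff{r}.
\]
The left-hand side then becomes $h\int_0^h (h-s)s^{-3}\int_0^s r^2 g(t+r)\diff{r}\diff{s}$. Exchanging the order over the region $0\le r\le s\le h$ gives
\[
h\int_0^h r^2 g(t+r)\Big(\int_r^h (h-s)s^{-3}\diff{s}\Big)\diff{r}.
\]
For $r>0$ the inner integral is elementary:
\[
\int_r^h (h s^{-3} - s^{-2})\diff{s} = \frac{h}{2}\big(r^{-2}-h^{-2}\big) - \big(r^{-1}-h^{-1}\big) = \frac{(h-r)^2}{2hr^2}.
\]
Multiplying by $h r^2$ yields exactly $\frac{(h-r)^2}{2}$, which is the claim.

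The only delicate point is integrability near $s=0$, which I would settle with Tonelli. Applying the same computation to $\|g\|$ shows that the absolute double integral equals $\int_0^h\frac{(h-r)^2}{2}\|g(t+r)\|\diff{r}\le \frac{h^2}{2}\|g\|_{L^1}<\infty$. This justifies Fubini for $g$ itself, componentwise in $\R^d$.
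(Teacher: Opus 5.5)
Your proof is correct and is essentially the paper's argument: Fubini--Tonelli, the substitution $r = sx$, and an elementary evaluation of the resulting kernel, which comes out to $\frac{(h-r)^2}{2}$. The only difference is the order of operations. The paper first swaps the $x$- and $s$-integrals and substitutes in $s$ for fixed $x$, which gives the nonsingular kernel $\int_{r/h}^1 h(hx-r)\diff{x}$; your substitution in $x$ for fixed $s$ leads instead to the improper kernel $\int_r^h (h-s)s^{-3}\diff{s}$, which your Tonelli argument handles correctly and more explicitly than the paper, though the intermediate bound $\|g\|_{L^1}/s$ in your second paragraph adds nothing and can be dropped.
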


\begin{proof}
  Using Fubini's Theorem and the substitution $sx \to s$, we obtain
  \begin{align*}
    &h \int_{0}^{h}  (h -s) \int_{0}^1x^2g(t + sx) \diff{x} \diff{s}
    =  \int_{0}^1 \int_{0}^{h} hx^2 (h -s) g(t + sx) \diff{s} \diff{x}\\
    &\quad = \int_{0}^1 \int_{0}^{hx} h x \Big(h -\frac{s}{x}\Big) g(t + s)
    \diff{s} \diff{x}
    = \int_{0}^{h} \int_{\frac{s}{h}}^{1} h (hx -s) \diff{x} \, g(t + s)
    \diff{s} \\
    &\quad = \int_{0}^{h} h \Big[ \frac{h x^2}{2} -sx \Big]_{x = \frac{s}{h}}^{1}
    g(t + s) \diff{s}
    = \int_{0}^{h} \frac{h^2 - 2 s h + s^2}{2} g(t + s) \diff{s}\\
    &\quad = \int_{0}^{h} \frac{(h- s)^2}{2} g(t + s) \diff{s}.
  \end{align*}
\end{proof}

\bibliographystyle{amsplain}
\bibliography{lit.bib}

\providecommand{\bysame}{\leavevmode\hbox to3em{\hrulefill}\thinspace}
\providecommand{\MR}{\relax\ifhmode\unskip\space\fi MR }
\providecommand{\MRhref}[2]{%
  \href{http://www.ams.org/mathscinet-getitem?mr=#1}{#2}
}
\providecommand{\href}[2]{#2}
\begin{thebibliography}{10}

\bibitem{Bochacik.2023B}
T.~Bochacik, \emph{A note on the probabilistic stability of randomized {T}aylor
  schemes}, Electron. Trans. Numer. Anal. \textbf{58} (2023), 101--114.

\bibitem{Bochacik.2023}
\bysame, \emph{On the properties of the exceptional set for the randomized
  {E}uler and {R}unge--{K}utta schemes}, Adv. Comput. Math. \textbf{49} (2023),
  no.~2, Paper No. 14, 16.

\bibitem{Bochacik2021}
T.~Bochacik, M.~Goćwin, P.~M. Morkisz, and P.~Przybyłowicz, \emph{Randomized
  {Runge}--{Kutta} method — {Stability} and convergence under inexact
  information}, Journal of Complexity \textbf{65} (2021), Paper No. 101554, 21.

\bibitem{2025-bochacik_convergence}
T.~Bochacik, P.~Przybyłowicz, and Ł. Stępień, \emph{Convergence and
  stability of randomized implicit two-stage {Runge}--{Kutta} schemes}, BIT
  Numerical Mathematics \textbf{65} (2025), no.~1, 23.

\bibitem{1975-Butcher}
J.~C. Butcher, \emph{A stability property of implicit {Runge}--{Kutta}
  methods}, BIT \textbf{15} (1975), no.~4, 358--361.

\bibitem{Butcher1987}
\bysame, \emph{The numerical analysis of ordinary differential equations}, A
  Wiley-Interscience Publication, John Wiley \& Sons, Ltd., Chichester, 1987,
  Runge--Kutta and general linear methods.

\bibitem{Clark1987}
D.~S. Clark, \emph{Short proof of a discrete {G}ronwall inequality}, Discrete
  Appl. Math. \textbf{16} (1987), no.~3, 279--281.

\bibitem{Coulibaly1999}
I.~Coulibaly and C.~L\'ecot, \emph{A quasi-randomized {R}unge--{K}utta method},
  Math. Comp. \textbf{68} (1999), no.~226, 651--659.

\bibitem{Dahlquist1963}
G.~G. Dahlquist, \emph{A special stability problem for linear multistep
  methods}, Nordisk Tidskr. Informationsbehandling (BIT) \textbf{3} (1963),
  27--43.

\bibitem{Daun2011}
T.~Daun, \emph{On the randomized solution of initial value problems}, J.
  Complexity \textbf{27} (2011), no.~3-4, 300--311.

\bibitem{Deimling1977}
K.~Deimling, \emph{Ordinary differential equations in {B}anach spaces}, Lecture
  Notes in Mathematics, vol. Vol. 596, Springer-Verlag, Berlin-New York, 1977.

\bibitem{Deimling.1985}
\bysame, \emph{Nonlinear functional analysis}, Springer-Verlag, Berlin, 1985.
  \MR{787404}

\bibitem{Dodson}
M.~A. Dodson, \emph{{Extended Runge--Kutta Monte Carlo Methods}}, Phd thesis,
  Lehigh University, Example City, CA, 1994, Available at \url{
  https://preserve.lehigh.edu/lehigh-scholarship/graduate-publications-theses-dissertations/theses-dissertations/extended-runge
  }.

\bibitem{Eisenmann2019}
M.~Eisenmann, M.~Kov\'acs, R.~Kruse, and S.~Larsson, \emph{On a randomized
  backward {E}uler method for nonlinear evolution equations with time-irregular
  coefficients}, Found. Comput. Math. \textbf{19} (2019), no.~6, 1387--1430.

\bibitem{Emmrich.2004}
E.~Emmrich, \emph{Gew{\"o}hnliche und {O}perator-{D}ifferentialgleichungen:
  {E}ine integrierte {E}inf{\"u}h\-rung in {R}andwertprobleme und
  {E}volutionsgleichungen f{\"u}r {S}tudierende}, Vieweg-Studium : Mathematik,
  Vieweg+Teubner Verlag, 2004.

\bibitem{Evans2010}
L.~C. Evans, \emph{{Partial Differential Equations, Second edition}}, American
  Mathematical Society, 2010.

\bibitem{haber1966}
S.~Haber, \emph{A modified {M}onte-{C}arlo quadrature}, Math. Comp. \textbf{20}
  (1966), 361--368.

\bibitem{haber1967}
\bysame, \emph{A modified {M}onte-{C}arlo quadrature. {II}}, Math. Comp.
  \textbf{21} (1967), 388--397.

\bibitem{Hairer1996}
E.~Hairer and G.~Wanner, \emph{{Solving Ordinary Differential Equations II}},
  Springer Berlin Heidelberg, 1996.

\bibitem{Hale1980}
J.~K. Hale, \emph{Ordinary differential equations}, second ed., Robert E.
  Krieger Publishing Co., Inc., Huntington, NY, 1980.

\bibitem{Heinrich2008}
S.~Heinrich and B.~Milla, \emph{The randomized complexity of initial value
  problems}, J. Complexity \textbf{24} (2008), no.~2, 77--88.

\bibitem{higham2000}
D.~J. Higham, \emph{Mean-square and asymptotic stability of the stochastic
  theta method}, SIAM J. Numer. Anal. \textbf{38} (2000), no.~3, 753--769
  (electronic).

\bibitem{Hofmanova.2020}
M.~Hofmanov\'a, M.~Kn\"oller, and K.~Schratz, \emph{Randomized exponential
  integrators for modulated nonlinear {S}chr\" odinger equations}, IMA J.
  Numer. Anal. \textbf{40} (2020), no.~4, 2143--2162.

\bibitem{HuEtAll.2024}
Y.~Hu, F.~Cheng, B.~Hu, and S.~Sarwar, \emph{Mathematical analysis of a
  randomized method for fractional {C}arath\'eodory type equation with
  time-irregular coefficients}, ZAMM Z. Angew. Math. Mech. \textbf{104} (2024),
  no.~5, Paper No. e202300264, 20.

\bibitem{Jentzen2009}
A.~Jentzen and A.~Neuenkirch, \emph{A random {E}uler scheme for
  {C}arath\'eodory differential equations}, J. Comput. Appl. Math. \textbf{224}
  (2009), no.~1, 346--359.

\bibitem{Kacewicz2004}
B.~Kacewicz, \emph{Randomized and quantum algorithms yield a speed-up for
  initial-value problems}, J. Complexity \textbf{20} (2004), no.~6, 821--834.

\bibitem{Kainhofer2003}
R.~Kainhofer, \emph{Q{MC} methods for the solution of delay differential
  equations}, J. Comput. Appl. Math. \textbf{155} (2003), no.~2, 239--252.

\bibitem{Kruse2017}
R.~Kruse and Y.~Wu, \emph{Error analysis of randomized {R}unge--{K}utta methods
  for differential equations with time-irregular coefficients}, Comput. Methods
  Appl. Math. \textbf{17} (2017), no.~3, 479--498.

\bibitem{KruseWu.2019}
\bysame, \emph{A randomized and fully discrete {G}alerkin finite element method
  for semilinear stochastic evolution equations}, Math. Comp. \textbf{88}
  (2019), no.~320, 2793--2825.

\bibitem{Lecot2001}
C.~Lécot, \emph{Quasi-randomized numerical methods for systems with
  coefficients of bounded variation}, Mathematics and Computers in Simulation
  \textbf{55} (2001), no.~1, 113--121, The Second IMACS Seminar on Monte Carlo
  Methods.

\bibitem{saito1996}
Y.~Saito and T.~Mitsui, \emph{Stability analysis of numerical schemes for
  stochastic differential equations}, SIAM J. Numer. Anal. \textbf{33} (1996),
  no.~6, 2254--2267.

\bibitem{stengle1990}
G.~Stengle, \emph{Numerical methods for systems with measurable coefficients},
  Appl. Math. Lett. \textbf{3} (1990), no.~4, 25--29.

\bibitem{stengle1995}
\bysame, \emph{Error analysis of a randomized numerical method}, Numer. Math.
  \textbf{70} (1995), no.~1, 119--128.

\bibitem{Strehmel2012}
K.~Strehmel, R.~Weiner, and H.~Podhaisky, \emph{{Numerik gew\"{o}hnlicher
  Differentialgleichungen}}, Vieweg+Teubner Verlag Wiesbaden, 2012.

\bibitem{Wu2022}
Y.~Wu, \emph{A randomized trapezoidal quadrature}, International Journal of
  Computer Mathematics \textbf{99} (2022), no.~4, 680--692.

\end{thebibliography}
\end{document}